\titleformat{\section}[block]{\centering\Large\bfseries}{\thesection}{1em}{}
\titleformat{\subsection}[block]{\centering\large\bfseries}{\thesubsection}{1em}{}
\titleformat{\subsubsection}[block]{\centering\normalsize\bfseries}{\thesubsubsection}{1em}{}
\newtheorem{theorem}{\textbf{Theorem}}[section]
\newtheorem{lemma}{\textbf{Lemma}}[section]
\newtheorem{proposition}{\textbf{Proposition}}[section]
\newtheorem{corollary}{\textbf{Corollary}}[section]
\newtheorem{remark}{\textbf{Remark}}[section]
\newtheorem{definition}{\textbf{Definition}}[section]
\providecommand{\keywords}[1]{\textbf{\textit{Keywords---}} #1}
\numberwithin{equation}{section}
\def\be{\begin{equation}}
\def\ee{\end{equation}}
\def\bea{\begin{eqnarray}}
\def\eea{\end{eqnarray}}
\def\bt{\begin{theorem}}
\def\et{\end{theorem}}
\def\bl{\begin{lemma}}
\def\el{\end{lemma}}
\def\br{\begin{remark}}
\def\er{\end{remark}}
\def\bp{\begin{proposition}}
\def\ep{\end{proposition}}
\def\bc{\begin{corollary}}
\def\ec{\end{corollary}}
\def\bd{\begin{definition}}
\def\ed{\end{definition}}
\def\Pi{\mathbf{\psi}}
\def\R{{\mathbb R}}
\def\N{{\mathbb N}}
\def\T{{\mathbb  T}}
\def\P{{\mathbb  P}}
\def\G{{\mathbb G}}
\def\Im{{\mathcal I}}
\title{\bf{Stability and bifurcation of Navier-Stokes equations in an annular
domain with mixed boundary conditions}}
\author{
Zhibo Hou$^{a}$
\thanks{houzhibo@mail.xhu.edu.cn}
\quad\quad
Liang Li$^{b}$
\thanks{liliang187@gxu.edu.cn}
\quad\quad
Quan Wang$^{c}$
\thanks{Corresponding author:xihujunzi@scu.edu.cn }
%\quad\quad
%Ruili Wu$^{c}$
%\thanks{wrl@cdtu.edu.cn}
%\thanks{cytjincheng@126.com}
%\quad\quad 
\\ \footnotesize $^{a}$
School of Science, Xihua University, Chengdu, Sichuan, 610039, P.R. China
 \\ \footnotesize $^{b}$
 School of Mathematics and Information Science, Guangxi University,
\\ \footnotesize 
Nanning, Guangxi, 530004, P.R.China
  \\ \footnotesize $^{c}$ College of Mathematics, Sichuan University,
  \footnotesize
 Chengdu, Sichuan, 610064,  P.R. China
 %\\ \footnotesize $^{c}$ School of Big Data and Artificial Intelligence, %Chengdu Technological University, 
 %\\ footnotesizeChengdu, 611730, China
% \thanks{The work was supported by the National Nature Science Foundation of China (11901408) and (11711306).}
}
\begin{document}
%\linenumbers
%%%%%%%%%%%%%%%%%%%%%%%%%%%%%%%%%%%%%%%
\maketitle
\begin{abstract}

We study the existence and stability of non-trivial steady-state solutions to the two-dimensional incompressible Navier–Stokes equations posed in the annulus $\Omega = B(0,b) \setminus \overline{B(0, a)}$ with radii $b>a>0$. 
The outer boundary $\partial B(0, b)$ is subject to the free condition
while the inner boundary $\partial B(0, a)$
 obeys a Navier-slip condition with effective slip length  $\alpha > 0$. 
 Our main results are fourfold. First, we establish global-in-time strong solutions and derive a sharp energy estimate that underpins the subsequent nonlinear instability analysis. Second, for $\alpha > 0$, we compute an explicit critical viscosity $\mu_c:=\mu_c(\alpha, a,b,\mu)$ that separates qualitatively different dynamical behaviours.  Third, we precisely characterize the stability properties of the trivial solution in three distinct regimes.
 The zero solution is globally asymptotically stable in $H^2$ if $\mu > \mu_c$.
If $\mu = \mu_c$, we prove an alternative theorem that completely describes the local dynamics near the trivial state. If $\mu < \mu_c$, the trivial solution is nonlinearly unstable in every $L^p$ ($p \geq 1$).
Finally, we demonstrate that for $\mu < \mu_c$, the system undergoes a pitchfork bifurcation that generates an infinite family of non-trivial steady states. For generic choices of $(\alpha,b)$, this bifurcation is supercritical; a measurable subset of parameter space yields subcritical transitions.  Notably, all bifurcating solutions share the same topological pattern—a single row of counter-rotating vortices—despite their mathematical non-uniqueness.\\
\keywords{Navier-Stokes equations; Global stability; Dynamic bifurcation.}
\end{abstract}

\newpage
\tableofcontents

\newpage
\section{Introduction}
Consider the following incompressible two-dimensional Navier–Stokes equations
   \begin{align}\label{main}
\begin{cases}
\frac{\partial \mathbf{u}}{\partial t}+ (\mathbf{u} \cdot \nabla )\mathbf{u} = \mu \Delta \mathbf{u}-
\nabla P+\mathbf{f}(\mathbf{x}),\quad \mathbf{x}\in \Omega,
\\ \nabla \cdot  \mathbf{u}=0, \quad \mathbf{x}\in \Omega,
\end{cases}
\end{align}
where $ \mathbf{u}=(u_1,u_2)$ denotes the velocity vector field, $P$ denotes the pressure, $\mu$ is the viscous coefficient, $\Omega$ is a subset of $\R^2$, and $\mathbf{f}(\mathbf{x})$ is a prescribed, time-independent body-force term. Any nontrivial steady-state solution of
\eqref{main} represents a time-independent motion determined by
   \begin{align}\label{main-s}
\begin{cases}
(\mathbf{u} \cdot \nabla )\mathbf{u} = \mu \Delta \mathbf{u}-
\nabla P+\mathbf{f}(\mathbf{x}),\quad \mathbf{x}=(x,y)\in \Omega,
\\ \nabla \cdot  \mathbf{u}=0, \quad \mathbf{x}=(x,y)\in \Omega.
\end{cases}
\end{align}

The existence of steady-state solutions and their stability of \eqref{main-s} are inextricably intertwined with a host of significant mathematical and scientific inquiries. 
For the steady-state solutions involved shear flows, such as
 Couette flow \cite{Masmoudi2022,Chen2020,Ding2020}, Poiseuille flow \cite{Zelati2020} and other types shear flows 
 \cite{Ibrahim2019, Bedrossian2018}, they are an important research topic in applied mathematics and fluid sciences. Steady Navier–Stokes solutions are connected to a broad spectrum of problems. Leray’s problem \cite{Leray1933, Korobkov2015}) addresses the existence of weak solutions in arbitrary bounded, multiply connected planar or axially symmetric domains under the sole necessary condition of zero net flux through the boundary. Liouville-type theorems investigate uniqueness or non-uniqueness for classical shear flows and beyond  \cite{Zhang2025, Bang2025}. Additional topics include the existence, uniqueness, and asymptotic structure of three-dimensional exterior-domain flows  \cite{Galdi2011}, bifurcation phenomena in the Taylor problem \cite{Temam1977}, and uniqueness questions in certain unbounded domains \cite{Amick1980}.
 
This network of interconnections highlights the profound significance of investigating non-trivial steady solutions of \eqref{main-s} as a central theme in contemporary applied mathematics.  By seamlessly integrating rigorous theory with real-world applications, this field deepens our understanding of complex fluid behaviours and the governing mathematical principles.  Readers seeking further insight are encouraged to consult the extensive literature, notably the penetrating analyses of Gajjar $\&$ Azzam \cite{Gajjar2004}, the comprehensive studies of Perazzo $\&$ Gratton \cite{Perazzo2004}, the innovative contributions of Bazant $\&$  Moffatt \cite{Bazant2005}, the recent advances by Wang $\&$  Wu \cite{Wang2023}, and the authoritative treatise of Galdi $\&$  Yamamoto \cite{Galdi2025}, together with the references therein.

If \(\Omega\subset\mathbb{R}^2\) is unbounded and \(\mathbf{f}(\mathbf{x})\equiv\mathbf{0}\), system \eqref{main-s} readily admits a continuum of non-trivial steady solutions.  For instance, on either \(\Omega=\mathbb{R}^2\) or \(\Omega=\mathbb{T}\times\mathbb{R}\), the pair \(\mathbf{u}=(U(y),0)\) solves \eqref{main-s} for every non-constant profile \(U(y)\) satisfying \(U''(y)=0\); this is the classical shear flow whose stability has been thoroughly investigated \cite{Masmoudi2024}.  When \(\mathbf{f}(\mathbf{x})\not\equiv\mathbf{0}\) and \(\Omega\) is bounded, system \eqref{main-s} can still support non-trivial steady solutions.  For example, let \(\Omega=\mathbb{T}\times(0,1)\) and take the body force \(\mathbf{f}(\mathbf{x})=(\cos\pi y,0)\).  Then the pair \(\mathbf{u}=(\mu^{-1}\pi^{-2}\cos\pi y,0)\) satisfies \eqref{main-s}; this is the celebrated Kolmogorov flow, whose stability has been extensively investigated \cite{Wei2020}.  When the fluid is confined to a bounded domain \(\Omega\) and is free of any external forcing (\(\mathbf{f}(\mathbf{x})\equiv\mathbf{0}\)), it becomes a fundamental question whether system \eqref{main-s} supports non-trivial steady states—stable or unstable—that differ from the classical shear or Kolmogorov flows \cite{Lu2020}, or Taylor vortices \cite{Ma2006}.  The existence of such solutions could uncover previously unknown pathways to turbulence and substantially enrich our understanding of nonlinear pattern formation in confined fluid systems.

Let $\Omega$ be a bounded domain with a smooth boundary $\partial \Omega$. When the system \eqref{main-s} is subjected to no-slip boundary conditions, specifically  $\mathbf{u}|_{\partial \Omega}=\mathbf{0}$, this represents the classical case that has long been a focal point for both physicists and mathematicians. Extensive research has been dedicated to exploring various critical aspects of the system \eqref{main} under these conditions. These aspects include the existence and uniqueness of solutions, their regularity properties, and the behavior of solutions in the limit of vanishing viscosity. This body of work has significantly advanced our understanding of the underlying fluid dynamics and the mathematical principles governing such systems. Let $\mathbf{u}$ be any classical steady-state solution to \eqref{main} subject to $\mathbf{u}|_{\partial \Omega}=\mathbf{0}$, 
testing $\eqref{main-s}_1$ by $\mathbf{u}$, integrating by part, we can get from $$\int_{\Omega}\left(\mu \abs{\nabla \mathbf{u}}^2+\mathbf{f}\cdot \mathbf{u}\right)\,dx\,dy=0,\quad \mathbf{u}|_{\partial \Omega}=\mathbf{0}$$
that $\mathbf{u}=\mathbf{0}$, provided $\mathbf{f}\equiv 0$. As a result,
system \eqref{main-s} on a bounded domain and free of any external forcing 
($\mathbf{f}\equiv 0$) permits only the zero steady solution.

It is important to note that the no-slip boundary condition
$\mathbf{u}|_{\partial \Omega}=\mathbf{0}$ can be regarded as a special case, specifically when $\alpha=-\infty$, of the more general Navier-slip boundary conditions  \cite{ Naiver1827}
  \begin{align}\label{cond-100}
    \begin{aligned}
&\mathbf{u} \cdot   \mathbf{n}|_{\partial\Omega}=0,~~~\left[\left(-p\mathbf{I} +\mu\left (\nabla \mathbf{u}+\left(\nabla \mathbf{u}\right)^{Tr} \right)\right)\cdot\mathbf{n}\right]
   \cdot   \mathbf{\tau}|_{\partial\Omega}=\alpha  \mathbf{u}  \cdot   \mathbf{\tau}|_{\partial\Omega}.
  \end{aligned}
    \end{align}
In this formulation, $\mathbf{n}$ denotes the outward normal vector to the boundary $\partial\Omega$, while 
  $\mathbf{\tau}$ represents the tangent vector. Within the context of the Navier-slip boundary condition \eqref{cond-100}, the parameter $\alpha$ holds significant physical meaning. 
  It can be a constant, a bounded function \cite{Kelliher2006}, or even a smooth matrix \cite{Gie2012}. For our current discussion, we focus on the scenario where $\alpha$ is a constant. When it comes to such problems, the case where $\alpha\leq 0$
  is of particular interest.
  This situation generally reflects the frictional interactions between the fluid and the boundary. It is the classical case and has garnered substantial attention from both physicists and mathematicians. We direct interested readers to the relevant literature, including Badur \cite{Badur2011}, Bailey \cite{Bailey2017}, Bleitner \cite{Bleitner2024}, Itoh \cite{itoh2008}, Cao \cite{Cao2023}, Ding et al. \cite{ding2018stability}, Quarisa \cite{Quarisa2021}, Tapia\cite{Acevedo2018, AcevedoTapia2021}, Li\cite{Liaa2025} and Goodair\cite{Goodair2025}, as well as the references cited therein, for further exploration of this important area of research.

 Let $\mathbf{u}$ be any classical steady-state solution to the system
\eqref{main} subject to Navier slip boundary conditions \eqref{cond-100} and
 \(\mathbf{f}(\mathbf{x})\equiv\mathbf{0}\). Denote
$\kappa$ as the curvature of $\partial\Omega$. By testing $\eqref{main-s}_1$ with $\mathbf{u}$, integrating by parts, and utilizing Navier slip boundary conditions, we obtain:
\begin{align}\label{nontrivial-condition}
\int_{\Omega}|\nabla \mathbf{u}|^2\,dx\,dy
=\int_{\partial \Omega}\frac{\alpha-\kappa}{\mu }\left(\mathbf{u}\cdot \mathbf{\tau}\right)^2\,ds.
\end{align}
This equation implies that $\mathbf{u}=\mathbf{0}$ provided
$\left(\kappa-\alpha\right)>0$.

 To investigate the existence of nontrivial steady-state solutions to the system \eqref{main-s} subject to Navier-slip boundary conditions and under the condition 
$\kappa-\alpha<0$ and  \(\mathbf{f}(\mathbf{x})\equiv\mathbf{0}\), we consider the case where $\Omega$ has a smooth boundary $\partial\Omega$ with 
 nonzero constant curvature.  
 Without loss of generality, let $\Omega$
be an annular domain in
$\R^2$ as depicted in \autoref{region},
and suppose that the velocity $\mathbf{u}$ is then subject to the following boundary conditions:
  \begin{align}\label{cond-1}
  &\mathbf{u} \cdot   \mathbf{n}|_{\abs{\mathbf{x} }=a}=0,\quad
  \mathbf{u} \cdot   \mathbf{n}|_{\abs{\mathbf{x} }=b}=\nabla \times   \mathbf{u}|_{\abs{\mathbf{x} }=b}=0,\\ \label{cond-2}
   &\left[\left(-p\mathbf{I} +\mu\left (\nabla \mathbf{u}+\left(\nabla \mathbf{u}\right)^{Tr} \right)\right)\cdot\mathbf{n}\right]
   \cdot   \mathbf{\tau}|_{\abs{\mathbf{x} }=a}
=\alpha  \mathbf{u}  \cdot   \mathbf{\tau}|_{\abs{\mathbf{x} }=a},
  \end{align}
  where $\mathbf{n}$ is the outward normal vector to the boundary $\partial\Omega$, 
  $\mathbf{\tau}$ is the tangent vector, and $\alpha $ is a positive constant.
 Please note that \eqref{cond-1} represents
 the stress-free boundary condition, while \eqref{cond-2} is the
 Navier-slip boundary condition, which is a chosen interpolation between stress-free and no-slip boundary conditions. The conditions \eqref{cond-1}-\eqref{cond-2} are physically valid when the boundary $\abs{\mathbf{x} }=a$ is rough and 
 the outer boundary $\abs{\mathbf{x} }=b$ is free from any applied loads or constraints.
 For $\Omega$ with zero constant curvature (flat domain), we note that
 Ding et al. \cite{ding2018stability} studied a similar problem by analyzing the stability and instability of the trivial solution $\mathbf{u}=\mathbf{0}$.
 \begin{figure}[tbh]
\centering \label{region}
        \begin{tikzpicture}[>=stealth',xscale=1,yscale=1,every node/.style={scale=1.5}]
\fill[domain = -2:360,gray][samples = 200] plot({2*cos(\x)}, {2*sin(\x)});
        \fill[domain = -2:360,white][samples = 200] plot({1/2*cos(\x)}, {1/2*sin(\x)});
\draw [thick,->] (0,-2.6) -- (0,2.6) ;
\draw [thick,->] (-2.8,0) -- (2.8,0) ;
%\draw [thick,->] (1.4,1.4) -- (0.5,0.5) ;
\node [below right] at (3.1,0) {$x$};
\node [below right] at (1/2,0) {$a$};
\node [below right] at (2,0) {$b$};
%\node [below left] at (0.5,0.5) {$\mathbf{g}$};
\node [left] at (0,3.1) {$y$};
\draw[domain = -2:360][samples = 200] plot({2*cos(\x)}, {2*sin(\x)});
\draw[domain = -2:360][samples = 200] plot({1/2*cos(\x)}, {1/2*sin(\x)});
\draw[fill,black] (0.5,0) circle [radius=2pt];
\draw[fill,black] (2,0) circle [radius=2pt];
\end{tikzpicture}
\caption{Schematic representation of the domain $\Omega$}
\end{figure}
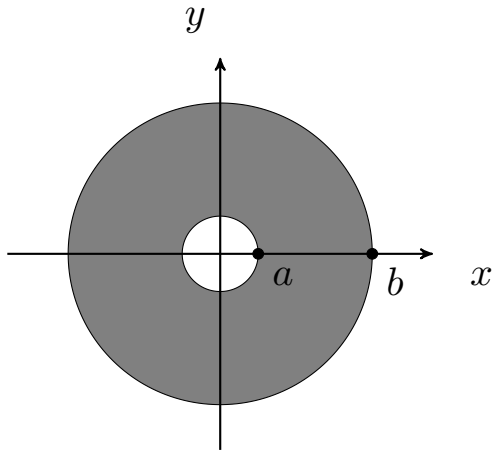

 While the Navier boundary value problem with $\alpha>0$ 
 has garnered relatively less attention, it is important to note that,
as Serrin \cite{Serrin1959} pointed out in 1959, the parameter $\alpha$ does not necessarily need to possess a defined sign. Indeed, this scenario is not only theoretically plausible but also finds practical relevance in fluid sciences.  For instance, in the context of flat hybrid gas-liquid surfaces, the eﬀective slip length $\alpha$ is invariably positive \cite{Haase2016}. 
Moreover, Navier-slip boundary condition \eqref{cond-2} with $\alpha>0$ is also generally used in other problems.  For example, in the study of boundary walls that accelerate the fluid, the Navier-slip boundary condition with a positive $\alpha$ is commonly used to account for the slip effect, which can significantly influence the fluid flow characteristics \cite{Magnaudet1995}.
For more examples, we refer readers to the problems of weather forecasts and hemodynamics \cite{Beavers1967, Chauhan1993}, as well as the moving contact line problem \cite{Ren2007, Guo2018} and other problems \cite{Arbon2025, Chang2025}. 

For the annular domain as depicted in \autoref{region}, under the condition 
$a^{-1}<\alpha$, intuitively, we see from \eqref{nontrivial-condition} 
and $\lim\limits_{\mu\to 0^{+}}\frac{\alpha-\kappa}{\mu }=+\infty$ that
there should exist viscosity threshold $\mu_c$ of $\mu$ above which 
$\mathbf{u}=\mathbf{0}$ is the only steady-state solution, but the system \eqref
{main-s} subject to the boundary conditions \eqref{cond-1}-\eqref{cond-3}
may permit other steady-state solutions. 
This analysis naturally leads to seven fundamental mathematical open questions:
    \begin{enumerate}
        \item What is the exact analytical expression for the critical viscosity threshold $\mu_c$?
        \item What are the precise stability characteristics of the trivial solution $\mathbf{u}=\mathbf{0}$ when $\mu > \mu_c$?
        \item How does the system \eqref{main} behave dynamically in the vicinity of $\mathbf{u}=\mathbf{0}$ when $\mu = \mu_c$?
        \item What is the well-posedness theory for the system \eqref{main} with the mixed boundary conditions \eqref{cond-1}–\eqref{cond-3} when 
 effective slip length $\alpha > 0$ on the inner boundary?
        \item What type of bifurcation occurs at the critical threshold $\mu = \mu_c$?
        \item What is the multiplicity of bifurcating solutions (i.e., their number and bifurcation structure)?
        \item What are the stability characteristics of these nontrivial bifurcated solutions?
    \end{enumerate}
In this article, we shall study the above seven questions.

   \subsection{Main results}
To consider the stability of the steady-state solution $(\mathbf{u},P)=(\mathbf{0},p_0)$. Denote the perturbation by
\[
\mathbf{v}=\mathbf{u}-\mathbf{0},\quad P=q-p_0.
\]
Then $(\mathbf{v},q)$ sastisfies the perturbed equations 
    \begin{align}\label{main=p1}
\begin{cases}
\frac{\partial \mathbf{v}}{\partial t}+ (\mathbf{v} \cdot \nabla )\mathbf{v} = \mu \Delta \mathbf{v}-
\nabla q\quad \mathbf{x}\in \Omega,
\\ \nabla \cdot  \mathbf{v}=0, \quad \mathbf{x}\in \Omega,\\
\mathbf{v}(0)=\mathbf{v}_0,
\end{cases}
\end{align}
 subject to the corresponding bondary conditions as follows
  \begin{align}\label{cond-3}
    \begin{aligned}
  &\mathbf{v} \cdot   \mathbf{n}|_{x^2+y^2=a^2}=0,\quad
  \mathbf{v} \cdot   \mathbf{n}|_{x^2+y^2=b^2}=\nabla \times   \mathbf{v}|_{x^2+y^2=b^2}=0,\\
   &\left[\left(-q\mathbf{I} +\mu\left (\nabla \mathbf{v}+\left(\nabla \mathbf{v}\right)^{Tr} \right)\right)\cdot\mathbf{n}\right]
   \cdot   \mathbf{\tau}|_{x^2+y^2=a^2}
  =\alpha  \mathbf{v}  \cdot   \mathbf{\tau}|_{x^2+y^2=a^2},
  \end{aligned}
    \end{align}
 and the following average condition
\begin{align}\label{average}
\int_{\partial B(0,r)}\mathbf{v}  \cdot   \mathbf{\tau}\,ds=0,\quad a\leq r\leq b,
\end{align}
  which guarantees the average perturbation of tangential velocity $v_{\theta}$ is zero.
  
   \begin{align}\label{uuu-1}
 \mu_c= \frac{a \alpha \left(1+3\sigma^4-4\sigma^2-
 4 \sigma^4 \log (\sigma)\right)}
 {2 \left(\sigma^4-1-4 \sigma^4 \log (\sigma)\right)}>0 ,\quad \sigma=b/a>1,
 \end{align}

   \begin{theorem}\label{linearstability}
  If $\mu>\mu_c$,  the state $(\mathbf{0},p_0)$ is linearly stable. While, it is linearly unstable if $\mu<\mu_c$.
  \end{theorem}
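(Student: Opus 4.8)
The plan is to recast the linearised dynamics as a self-adjoint eigenvalue problem, reduce linear (in)stability to the sign of the principal eigenvalue, and then pin down that sign by an explicit variational computation that reproduces \eqref{uuu-1}.

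First I would linearise \eqref{main=p1} by discarding the quadratic term $(\mathbf{v}\cdot\nabla)\mathbf{v}$ and seek normal modes $\mathbf{v}=e^{\lambda t}\mathbf{w}(\mathbf{x})$, $q=e^{\lambda t}\pi(\mathbf{x})$. Projecting onto the space of divergence-free fields obeying \eqref{cond-3}--\eqref{average} eliminates $\pi$, and the generator becomes $-A$ associated with the symmetric bilinear form $2\mu\int_\Omega D(\mathbf{w}):D(\mathbf{z})\,dx-\alpha\int_{|\mathbf{x}|=a}(\mathbf{w}\cdot\btau)(\mathbf{z}\cdot\btau)\,ds$, where $D(\mathbf{w})=\tfrac12(\nabla\mathbf{w}+(\nabla\mathbf{w})^{Tr})$. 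Korn's inequality together with the trace theorem shows $A$ is bounded below with compact resolvent, hence self-adjoint with a real, discrete spectrum. Each mode therefore grows or decays according to a real $\lambda$, so the state is linearly stable precisely when every eigenvalue is negative, i.e. when the principal eigenvalue $\lambda_1(\mu)$ is negative, and unstable as soon as $\lambda_1(\mu)>0$.

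Next I would exploit the Rayleigh characterisation. Testing the eigenvalue equation against $\mathbf{w}$ and invoking the curvature-corrected Korn identity $2\int_\Omega|D(\mathbf{w})|^2\,dx=\int_\Omega|\nabla\mathbf{w}|^2\,dx+\int_{\partial\Omega}\kappa(\mathbf{w}\cdot\btau)^2\,ds$ (consistent with \eqref{nontrivial-condition}) gives
\[
\lambda_1(\mu)=\max_{\mathbf{w}}\frac{-2\mu\int_\Omega|D(\mathbf{w})|^2\,dx+\alpha\int_{|\mathbf{x}|=a}(\mathbf{w}\cdot\btau)^2\,ds}{\int_\Omega|\mathbf{w}|^2\,dx},
\]
the maximum running over admissible $\mathbf{w}$. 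The zero-average condition \eqref{average} annihilates the $n=0$ mode and hence the only zero-strain (rigid-rotation) field, so $\int_\Omega|D(\mathbf{w})|^2>0$ strictly on the admissible set; since the boundary term is $\mu$-independent, $\lambda_1(\mu)$ is continuous and strictly decreasing. It therefore changes sign exactly once, at
\[
\mu_\ast=\alpha\,\sup_{\mathbf{w}}\frac{\int_{|\mathbf{x}|=a}(\mathbf{w}\cdot\btau)^2\,ds}{2\int_\Omega|D(\mathbf{w})|^2\,dx},
\]
with $\lambda_1(\mu)<0$ for $\mu>\mu_\ast$ and $\lambda_1(\mu)>0$ for $\mu<\mu_\ast$; the theorem then reduces to proving $\mu_\ast=\mu_c$. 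To evaluate the supremum I would write $\mathbf{w}=\nabla^{\perp}\psi$ (enforcing $\nabla\cdot\mathbf{w}=0$) and expand $\psi=\sum_{n\ge1}\psi_n(r)e^{in\theta}$; orthogonality of the $e^{in\theta}$ diagonalises both the boundary trace and the strain energy, so the problem decouples into a maximisation over each wavenumber $n$. For fixed $n$ the Euler--Lagrange equation is the marginal ($\lambda=0$) Stokes problem $\mu\Delta^2\psi_n=0$, an equidimensional ODE with solutions $r^{\pm n},\,r^{\pm n+2}$ (degenerating to $r,\,r^{-1},\,r^{3},\,r\log r$ at $n=1$). Imposing $\psi_n(a)=\psi_n(b)=0$ from $\mathbf{w}\cdot\mathbf{n}=0$, the vanishing-vorticity condition $\omega_n(b)=0$ from \eqref{cond-1}, and the Robin condition obtained by rewriting the Navier-slip law \eqref{cond-2} through $\omega=-\Delta\psi$ and $w_\tau=-\partial_r\psi$, yields a homogeneous $4\times4$ system whose solvability determinant fixes $\mu_c(n)$; I expect $n=1$ to realise the supremum, giving $\mu_c=\mu_c(1)$ equal to \eqref{uuu-1} and corresponding to the single azimuthal wave, i.e. one row of counter-rotating vortices.

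The main obstacle will be this last stage: carrying out the $4\times4$ solvability computation cleanly and, above all, proving that $n=1$ rather than some higher mode maximises $\mu_c(n)$, which demands either an explicit monotonicity analysis of the closed-form $\mu_c(n)$ in $n$ or a direct mode-by-mode comparison. A secondary but essential subtlety is the faithful translation of the Navier-slip stress condition into a boundary condition on $\psi$, since this is where the inner curvature $\kappa=1/a$ enters, and it must be kept consistent with the curvature term in the Korn identity used to build the Rayleigh quotient.
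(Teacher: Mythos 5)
Your strategy is essentially the paper's: a self-adjoint formulation with compact resolvent, a Rayleigh-quotient characterisation of the principal eigenvalue that is strictly decreasing in $\mu$ (your strain-energy quotient is, after the curvature-corrected Korn identity, the same quadratic form as $E(\mu,\mathbf{u})$ in \eqref{variation-one}), a unique sign change at the marginal viscosity, and an explicit evaluation of that viscosity by passing to a streamfunction, separating variables in $\theta$, and solving the equidimensional problem $\Delta_n^2\Psi=0$ subject to four boundary conditions, exactly as in \eqref{polar-l-eigen-2}--\eqref{uuu}. Your monotonicity argument for $\lambda_1(\mu)$ (a maximum of affine, strictly decreasing functions of $\mu$) is if anything cleaner than the paper's perturbative computation of $\frac{d\lambda_1}{d\mu}\big|_{\mu=\mu_c}$ in \autoref{posi-eigen}.

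The genuine gap is the step you flag but do not carry out: proving that the azimuthal wavenumber $n=1$ realises the extremum. Without it you have only identified the sign-change viscosity as $\max_{n\ge 1}\mu_c(n)$, not as the explicit $\mu_c$ of \eqref{uuu-1}, so the theorem as stated is not yet proved. The paper closes this in \autoref{mingti0718} without any closed-form monotonicity analysis of $\mu_c(n)$: writing the marginal problem at wavenumber $n$ variationally as $\gamma_n=\inf\bigl\{\int_a^b r(\Delta_n\Phi)^2\,dr/(\Phi'(a))^2\bigr\}$ over $\Phi\in H^2(a,b)$ with $\Phi(a)=\Phi(b)=0$ (see \eqref{variation-two-2}), so that the associated critical viscosity is a strictly decreasing function of $\gamma_n$, one inserts the level-$(n+1)$ minimiser $\Psi_{n+1}$ as a test function at level $n$; since $\Delta_n-\Delta_{n+1}=\frac{2n+1}{r^2}$, an integration by parts using $\Psi_{n+1}(a)=\Psi_{n+1}(b)=0$ yields $\gamma_n-\gamma_{n+1}<0$, hence $n=1$ is extremal. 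Adding this comparison (or an equivalent proof that your $\mu_c(n)$ is maximised at $n=1$) completes your argument.
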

  The \autoref{linearstability} is deduced from \autoref{corrollary}.
 
 \begin{theorem}\label{existence}
For any given $T>0$ and initial data $\mathbf{v}_0 \in H^{2}\left(\Omega\right) $
satisfying $\nabla \cdot \mathbf{v}_0=0$ and the average condition \eqref{average}, there exists a unique strong solution
$(\mathbf{v},q)\in C([0,T];H^2(\Omega)\times H^1(\Omega))$ to
 the problem \eqref{main=p1} subject to boundary conditions \eqref{cond-3}.
 In addition, there exists $\delta_0 \in (0,1]$ such that
 \begin{align}\label{nonlinear0718}
\begin{aligned}
&\norm{\mathbf{v}}_{H^2}^2
+\norm{\nabla q}_{L^2}^2
+\norm{\mathbf{v}_t}_{L^2}^2
+\int_0^t\left(\norm{\nabla \mathbf{v}}_{L^2}^2
+
\norm{\mathbf{v}_s}_{H^1}^2
\right)\,ds
\\&\leq C_0\left(\norm{\mathbf{v}_{0}}_{H^2}^2
+
\int_0^t\norm{\mathbf{v}}_{L^2}^2\,ds
\right)
\end{aligned}
\end{align}
provided $\norm{\mathbf{v}}_{H^2}^2\leq \delta_0$ on the closed interval $[0,T]$,
where $ C_0$ is independent of $T$.
\end{theorem}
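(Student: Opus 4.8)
The plan is to prove existence and uniqueness by a linearization-plus-contraction scheme (equivalently a Galerkin scheme), and to establish the quantitative bound \eqref{nonlinear0718} through a hierarchy of a priori energy estimates closed by elliptic (Stokes) regularity for the mixed boundary value problem \eqref{cond-3}. The conceptual core is a stationary Stokes estimate: for each fixed time one reads $\eqref{main=p1}_1$ as $-\mu\Delta\mathbf{v}+\nabla q=\mathbf{g}$ with $\mathbf{g}=-\mathbf{v}_t-(\mathbf{v}\cdot\nabla)\mathbf{v}$, and one needs
\[
\norm{\mathbf{v}}_{H^2}+\norm{\nabla q}_{L^2}\le C\left(\norm{\mathbf{g}}_{L^2}+\norm{\mathbf{v}}_{L^2}\right)
\]
under the boundary conditions \eqref{cond-3}. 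Since $\Omega$ is a smooth annulus with stress-free data on $\abs{\mathbf{x}}=b$ and Navier-slip data on $\abs{\mathbf{x}}=a$, I would establish this either by the normal/tangential decomposition near each boundary component combined with interior regularity, or by passing to polar coordinates and reducing to a stream-function/Fourier problem, which additionally renders the $\alpha$-dependent boundary operator explicit and exhibits the associated Stokes operator as self-adjoint, positive, with compact resolvent (the discreteness underlying \autoref{linearstability}).

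For the a priori estimates I would proceed in three layers. First, testing $\eqref{main=p1}_1$ by $\mathbf{v}$ and integrating by parts gives the basic balance $\tfrac12\tfrac{d}{dt}\norm{\mathbf{v}}_{L^2}^2+\mu\norm{\nabla\mathbf{v}}_{L^2}^2=\text{(boundary term)}$: the convection term drops out because $\nabla\cdot\mathbf{v}=0$ and $\mathbf{v}\cdot\mathbf{n}=0$ on $\partial\Omega$, while the stress-free and Navier-slip conditions convert the viscous boundary contribution into the curvature/$\alpha$ integral of the type appearing in \eqref{nontrivial-condition}. Second, I would differentiate the system in $t$ and test by $\mathbf{v}_t$; the delicate point is to verify that $\mathbf{v}_t$ inherits the same homogeneous boundary conditions, so that integration by parts of the viscous term produces boundary integrals of the form $\tfrac{\alpha}{2}\tfrac{d}{dt}\int_{\abs{\mathbf{x}}=a}(\mathbf{v}\cdot\mathbf{\tau})^2\,ds$, which remain controllable even though $\alpha>0$. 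This layer yields $\norm{\mathbf{v}_t}_{L^2}^2$ and $\int_0^t\norm{\nabla\mathbf{v}_s}_{L^2}^2\,ds$. Third, feeding these bounds into the Stokes estimate promotes the regularity to $\norm{\mathbf{v}}_{H^2}+\norm{\nabla q}_{L^2}$ and to $\int_0^t\norm{\mathbf{v}_s}_{H^1}^2\,ds$.

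The nonlinearity is dispatched by two-dimensional interpolation. Using $\norm{(\mathbf{v}\cdot\nabla)\mathbf{v}}_{L^2}\le\norm{\mathbf{v}}_{L^4}\norm{\nabla\mathbf{v}}_{L^4}$ together with the Ladyzhenskaya inequality and the standing hypothesis $\norm{\mathbf{v}}_{H^2}^2\le\delta_0$, every nonlinear contribution carries a factor $\norm{\mathbf{v}}_{H^2}$ and can be absorbed into the left-hand side once $\delta_0$ is fixed small. This is exactly where smallness enters, and why $C_0$ does not deteriorate with $T$: the only surviving lower-order quantity is $\int_0^t\norm{\mathbf{v}}_{L^2}^2\,ds$, retained on the right in the precise form demanded by the later instability bootstrap. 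Existence then follows from the Galerkin approximation in the eigenbasis of the Stokes operator (or a contraction in $C([0,T];H^2)$ for short times extended by the uniform bound), and uniqueness follows from an $L^2$ estimate on the difference of two solutions together with Gronwall's inequality.

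The main obstacle is the rigorous derivation and repeated use of the Stokes regularity estimate under the \emph{mixed} boundary conditions. The outer stress-free and inner Navier-slip conditions are of different types, the latter couples $q$ to the tangential stress through $\alpha$, and one must show this structure is preserved by $\partial_t$ so that the $\mathbf{v}_t$-testing produces boundary terms that are perfect time derivatives rather than uncontrolled contributions. Establishing that these conditions generate a self-adjoint, positive Stokes operator with the full elliptic gain of two derivatives—uniformly in the relevant parameters—is the technical heart on which both the construction and the estimate \eqref{nonlinear0718} rest.
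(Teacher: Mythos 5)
Your proposal is correct and follows essentially the same route as the paper: an $L^2$ energy estimate, a time-differentiated estimate tested against $\mathbf{v}_t$, Ladyzhenskaya interpolation plus the smallness hypothesis $\norm{\mathbf{v}}_{H^2}^2\le\delta_0$ to absorb the nonlinearity, and a mixed-boundary Stokes regularity estimate (the paper's \autoref{Stokes0725}) to upgrade to $H^2$ and recover $\nabla q$, leaving only $\int_0^t\norm{\mathbf{v}}_{L^2}^2\,ds$ on the right so that $C_0$ is $T$-independent. The one ingredient you leave implicit is how the sign-indefinite boundary term $\left(\alpha-\mu a^{-1}\right)\int_{\partial B(0,a)}\left(\tau\cdot\mathbf{v}\right)^2\,ds$ is absorbed; the paper does this with a dedicated trace-interpolation inequality (\autoref{epsilon-1}) giving $\epsilon\norm{\nabla\mathbf{v}}_{L^2}^2+C_{\epsilon}\norm{\mathbf{v}}_{L^2}^2$, rather than by writing the boundary contributions as perfect time derivatives.
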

  The proof of \autoref{existence} is given in \autoref{section31}.

  \begin{theorem}\label{non-stability}
If $\mu>\mu_c$,  the steady-state solution $(\mathbf{0},p_0)$ is nonlinearly stable in
$H^2$-norm. More precisely, there exists a positive constant $\lambda_0$ such that
 the global strong solution
$(\mathbf{v}^{\delta},q^{\delta})\in C([0,+\infty);H^2(\Omega)\times H^1(\Omega))$ to the problem 
\eqref{main=p1}-\eqref{average} with the initial data $\mathbf{v}_0\in H^2(\Omega)$
obeying the incompressible condition $\nabla \cdot \mathbf{v}_0=0$
and the average condition \eqref{average}
satisfies
\begin{align}
\norm{\mathbf{v}}_{H^2}
+\norm{q}_{H^1}
\leq C\norm{\mathbf{v}_0}_{H^2}e^{-\lambda_0t}.
\end{align}
\end{theorem}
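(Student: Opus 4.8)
The plan is to deduce nonlinear $H^2$ stability from two ingredients already available: the linear stability of the trivial state for $\mu>\mu_c$ in \autoref{linearstability}, which I will sharpen into a quantitative spectral gap, and the conditional a priori bound \eqref{nonlinear0718} of \autoref{existence}, which controls the full $H^2$ norm of the perturbation through its $L^2$ history so long as the solution remains small. The argument splits into three stages: exponential decay in $L^2$, then global existence with a uniform smallness bound by a continuation argument, and finally an exponentially weighted version of \eqref{nonlinear0718} that transfers the $L^2$ rate to the $H^2$ level.

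First I would establish the $L^2$ decay. Testing the momentum equation of \eqref{main=p1} against $\mathbf{v}$ and integrating by parts, the pressure term drops out by $\nabla\cdot\mathbf{v}=0$ and $\mathbf{v}\cdot\mathbf{n}=0$, and, crucially, the convective term vanishes as well, since
\[
\int_\Omega (\mathbf{v}\cdot\nabla)\mathbf{v}\cdot\mathbf{v}\,dx\,dy
=\frac{1}{2}\int_{\partial\Omega}\abs{\mathbf{v}}^2(\mathbf{v}\cdot\mathbf{n})\,ds
-\frac{1}{2}\int_\Omega(\nabla\cdot\mathbf{v})\abs{\mathbf{v}}^2\,dx\,dy=0
\]
under \eqref{cond-3}. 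Treating the viscous term with the Navier-slip condition exactly as in the derivation of \eqref{nontrivial-condition} yields the energy identity $\tfrac{1}{2}\frac{d}{dt}\norm{\mathbf{v}}_{L^2}^2=-D(\mathbf{v})$ with $D(\mathbf{v})=\mu\norm{\nabla\mathbf{v}}_{L^2}^2-\int_{\partial\Omega}(\alpha-\kappa)(\mathbf{v}\cdot\btau)^2\,ds$, precisely the quadratic form governing the linearized problem. Linear stability for $\mu>\mu_c$ is equivalent to positivity of the least eigenvalue of the associated Stokes eigenvalue problem on the space of divergence-free fields obeying \eqref{cond-3} and the average condition \eqref{average}; its Rayleigh-quotient characterization then provides a gap $D(\mathbf{v})\ge 2\lambda_1\norm{\mathbf{v}}_{L^2}^2$ with $\lambda_1=\lambda_1(\mu)>0$. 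Since \eqref{average} is propagated by the flow, $\mathbf{v}(t)$ stays in this subspace, so a Gr\"onwall argument gives $\norm{\mathbf{v}(t)}_{L^2}^2\le\norm{\mathbf{v}_0}_{L^2}^2 e^{-4\lambda_1 t}$ while the solution exists, whence $\int_0^t\norm{\mathbf{v}}_{L^2}^2\,ds\le\norm{\mathbf{v}_0}_{L^2}^2/(4\lambda_1)$ uniformly in $t$.

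Next I would globalize. Let $T^\ast$ be the maximal time for which $\norm{\mathbf{v}(t)}_{H^2}^2\le\delta_0$ on $[0,T^\ast)$; there \autoref{existence} supplies the strong solution and the bound \eqref{nonlinear0718}. Inserting the uniform $L^2$ estimate into the right-hand side of \eqref{nonlinear0718} gives $\norm{\mathbf{v}(t)}_{H^2}^2\le C_0\big(1+\tfrac{1}{4\lambda_1}\big)\norm{\mathbf{v}_0}_{H^2}^2$ on $[0,T^\ast)$. Choosing $\norm{\mathbf{v}_0}_{H^2}$ small enough that this is strictly below $\delta_0$ forces $\norm{\mathbf{v}(t)}_{H^2}^2<\delta_0$, so by continuity $T^\ast=+\infty$ and the solution is global with a uniform $H^2$ bound.

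Finally, to upgrade the $L^2$ decay to the claimed exponential $H^2$ decay, I would rerun the derivation of \eqref{nonlinear0718} against the weight $e^{\beta t}$ for a small $\beta\in(0,4\lambda_1)$. Because the smallness $\norm{\mathbf{v}}_{H^2}^2\le\delta_0$ lets every nonlinear contribution be absorbed into the dissipation, the extra terms $\beta e^{\beta t}(\cdots)$ generated by the weight are dominated by that same dissipation once $\beta$ is small, producing a weighted estimate of the shape $e^{\beta t}\big(\norm{\mathbf{v}(t)}_{H^2}^2+\norm{\nabla q}_{L^2}^2\big)\lesssim\norm{\mathbf{v}_0}_{H^2}^2+\int_0^t e^{\beta s}\norm{\mathbf{v}}_{L^2}^2\,ds$. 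The weighted $L^2$ bound $\int_0^t e^{\beta s}\norm{\mathbf{v}}_{L^2}^2\,ds\le\norm{\mathbf{v}_0}_{L^2}^2/(4\lambda_1-\beta)$, from the weighted form of the $L^2$ identity, closes the estimate and yields $\norm{\mathbf{v}(t)}_{H^2}\le C\norm{\mathbf{v}_0}_{H^2}e^{-\beta t/2}$; taking $\lambda_0=\beta/2$ gives the stated rate. The pressure bound follows because $\norm{\nabla q}_{L^2}$ inherits the same decay and $\norm{q}_{L^2}\le C\norm{\nabla q}_{L^2}$ by Poincar\'e once $q$ is normalized to zero mean. The main obstacle is this last stage: confirming that the exponential weight survives the full higher-order estimate, i.e. that the $\beta$-perturbations of the top-order terms — including those in the $\nabla q$ and $\mathbf{v}_t$ estimates feeding \eqref{nonlinear0718} — remain absorbable under the smallness hypothesis. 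The $L^2$ decay and the continuation step are comparatively routine, resting on the vanishing of the convective term and the spectral gap.
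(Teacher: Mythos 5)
Your three-stage argument is internally coherent: the $L^2$ energy identity with the Rayleigh-quotient gap from \eqref{variation-one} does give unconditional exponential $L^2$ decay (this is exactly the paper's first lemma in \autoref{section32}); the continuation argument correctly combines the time-integrated $L^2$ bound with \eqref{nonlinear0718} to keep $\norm{\mathbf{v}}_{H^2}^2$ below $\delta_0$ for all time; and the exponentially weighted rerun of \eqref{nonlinear0718} does close, because the only part of the energy functional not dominated by the dissipation is $\norm{\mathbf{v}}_{L^2}^2$, which the weighted $L^2$ decay absorbs, after which the Stokes estimate of \autoref{Stokes0725} transfers the rate to $\norm{\mathbf{v}}_{H^2}+\norm{\nabla q}_{L^2}$. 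As a proof of \emph{small-data} exponential stability, the plan works.

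The genuine gap is one of scope: \autoref{non-stability} (and the abstract, which calls the zero solution ``globally asymptotically stable in $H^2$'') imposes no smallness on $\mathbf{v}_0$, whereas your Stage 2 requires $\norm{\mathbf{v}_0}_{H^2}^2$ to be small compared with $\delta_0$, since \eqref{nonlinear0718} is only valid while $\norm{\mathbf{v}}_{H^2}^2\le\delta_0$. The paper's proof is built precisely to avoid this restriction, and it takes a very different route: after the same $L^2$ decay, it bounds $\norm{\mathbf{v}}_{W^{1,p}}$ unconditionally by studying the vorticity equation \eqref{three-proof--3} with the nonvanishing boundary datum $\omega=-\phi v_\tau$ on $r=a$, sandwiching $\omega$ between modified vorticities $\omega^{\pm}$ with constant boundary data via the parabolic maximum principle (the transport term costs nothing there, regardless of the size of $\mathbf{v}$); it then derives decay of $\norm{q}_{H^1}$ from the pressure Poisson problem \eqref{213-3}, and closes the $\norm{\mathbf{v}_t}_{L^2}$ estimate ``for large enough time'' using only that $\norm{\mathbf{v}}_{H^1}$ has already been shown to decay, not that it is initially small. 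To prove the theorem as stated you would need to either adopt that large-data machinery or explicitly weaken the statement to local stability; as written, your argument establishes a strictly weaker conclusion.
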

  The proof of \autoref{non-stability} is given in \autoref{section32}.

\begin{theorem}\label{criticalcase}
If $\mu=\mu_c$,  for the global strong solution
$(\mathbf{v},q)\in C([0,+\infty);H^2(\Omega)\times H^1(\Omega))$ to the problem 
\eqref{main=p1}-\eqref{average}, with the initial data $\mathbf{v}_0\in H^2(\Omega)$
obeying the incompressible condition $\nabla \cdot \mathbf{v}_0=0$
and the average condition \eqref{average}, one of the following results holds:
\begin{enumerate}
    \item[\rm{(1)}] There exists a sequence of invariant sets \(\{ \Gamma_n \} \subset E_0\) such that 
    \[
    0 \notin \Gamma_n, \quad \lim_{n \to \infty} \text{dist}( \Gamma_n, 0 ) = 0,
    \]
where $E_0=\text{Span}\{\mathbf{u}\}$ and $\mathbf{u}$ satisfies the following
system
\[
\begin{cases}
\mu \Delta \mathbf{u}-
\nabla p=0,
\\ \nabla \cdot  \mathbf{u}=0, 
\end{cases}
\]
subject to the boundary conditions \eqref{cond-3} and the average condition \eqref{average} .
 \item[\rm{(2)}] The state $(\mathbf{0},p_0)$ is a locally asymptotically stable equilibrium of  the problem 
\eqref{main=p1}-\eqref{average}  under the $L^2$-norm. Moreover, if the problem 
\eqref{main=p1}-\eqref{average}  has no other invariant sets in $E_0$ except the trivial equilibrium $(\mathbf{0},p_0)$, then $(\mathbf{0},p_0)$ is globally asymptotically stable.
\end{enumerate}
\end{theorem}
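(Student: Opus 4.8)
The plan is to exploit that, at $\mu=\mu_c$, the $L^2$-energy becomes a Lyapunov functional whose dissipation vanishes exactly on the neutral subspace $E_0$, and then to read the dichotomy off the LaSalle invariance principle. First I would derive the energy identity for \eqref{main=p1}. Testing $\eqref{main=p1}_1$ with $\mathbf{v}$ and integrating by parts, the convective term drops out since $\int_\Omega (\mathbf{v}\cdot\nabla)\mathbf{v}\cdot\mathbf{v}\,dx\,dy=\tfrac12\int_{\partial\Omega}|\mathbf{v}|^2(\mathbf{v}\cdot\mathbf{n})\,ds=0$ by $\nabla\cdot\mathbf{v}=0$ and $\mathbf{v}\cdot\mathbf{n}=0$, and so does the pressure term; handling the viscous boundary contribution through \eqref{cond-3} as in the derivation of \eqref{nontrivial-condition} (only the Navier-slip inner boundary survives) yields
\[
\frac{d}{dt}\Big(\tfrac12\|\mathbf{v}\|_{L^2}^2\Big)=Q_{\mu_c}(\mathbf{v}):=-\mu_c\int_\Omega|\nabla\mathbf{v}|^2\,dx\,dy+\int_{\partial B(0,a)}(\alpha-\kappa)(\mathbf{v}\cdot\btau)^2\,ds .
\]
The decisive structural fact is that $\mu_c$ in \eqref{uuu-1} is exactly the threshold for which $Q_\mu$ is negative semi-definite on the admissible class with kernel $E_0$: one has $Q_{\mu_c}(\mathbf{v})\le0$ for every admissible $\mathbf{v}$, and $Q_{\mu_c}(\mathbf{v})=0$ if and only if $\mathbf{v}$ extremizes the associated Rayleigh quotient, i.e.\ $\mathbf{v}\in E_0$. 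In particular $\|\mathbf{v}(t)\|_{L^2}$ is non-increasing.

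Next I would set up the semiflow and apply the invariance principle. From the global strong solution and the a priori control behind \autoref{existence}, together with the monotonicity of the energy, the forward orbit $\{\mathbf{v}(t):t\ge0\}$ is bounded in $H^2$ and hence precompact in $L^2$ by the compact embedding $H^2(\Omega)\hookrightarrow\hookrightarrow L^2(\Omega)$. Thus $\omega(\mathbf{v}_0)$ is nonempty, compact and invariant; monotonicity makes $\|\mathbf{v}\|_{L^2}$ constant on $\omega(\mathbf{v}_0)$, so the energy identity forces $Q_{\mu_c}\equiv0$ there, whence $\omega(\mathbf{v}_0)\subset E_0$. Since the energy is constant on this invariant set, every complete trajectory through a point of $\omega(\mathbf{v}_0)$ remains in $E_0$, i.e.\ $\omega(\mathbf{v}_0)$ is a genuinely flow-invariant subset of $E_0$.

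The two alternatives then correspond to whether $\mathbf{0}$ is isolated among the flow-invariant subsets of $E_0$. If no sequence of nontrivial invariant sets in $E_0$ accumulates at $\mathbf{0}$, then $\mathbf{0}$ has an $L^2$-neighbourhood $U$ whose only invariant set is $\{\mathbf{0}\}$; for small $\mathbf{v}_0$ the bound $\|\mathbf{v}(t)\|_{L^2}\le\|\mathbf{v}_0\|_{L^2}$ keeps $\omega(\mathbf{v}_0)$ inside $U$, so $\omega(\mathbf{v}_0)=\{\mathbf{0}\}$ and $\mathbf{v}(t)\to\mathbf{0}$ in $L^2$, which is the local asymptotic stability of alternative (2); if in addition $\{\mathbf{0}\}$ is the only invariant set in all of $E_0$, the same reasoning applies to every $\mathbf{v}_0$ and yields global asymptotic stability. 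In the complementary case such an accumulating sequence $\{\Gamma_n\}\subset E_0$ exists, which is precisely alternative (1).

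The main obstacle lies in the two structural inputs above. First, I must prove rigorously that $\ker Q_{\mu_c}=E_0$; this is the variational/spectral content of \eqref{uuu-1}, identifying $\mu_c$ as the optimal constant in the trace-type inequality $\int_{\partial B(0,a)}(\alpha-\kappa)(\mathbf{v}\cdot\btau)^2\,ds\le\mu\int_\Omega|\nabla\mathbf{v}|^2\,dx\,dy$ and verifying that its extremizers solve the Stokes system defining $E_0$. Second, the precompactness of orbits in this mixed-boundary, infinite-dimensional setting is delicate: the conditional estimate \eqref{nonlinear0718} must be upgraded to a time-uniform $H^2$ bound, using the non-increase of the energy and the standard dissipative/asymptotic-compactness machinery for two-dimensional Navier--Stokes flows, together with recovery of the pressure $q$ in $H^1$. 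Once these are secured, LaSalle's invariance principle delivers the stated alternative at once.
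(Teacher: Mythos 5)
Your proposal is correct and follows essentially the same route as the paper: the $L^2$ energy is non-increasing at $\mu=\mu_c$ with dissipation vanishing exactly on the neutral eigenspace $E_0$ (the paper implements this via the eigenfunction expansion of the self-adjoint operator $\mathcal{A}_{\mu_c}$ and the spectral gap $\lambda_3<0$, which is a concrete form of your LaSalle argument), and the dichotomy is then read off from whether $\mathbf{0}$ is isolated among invariant sets of $E_0$. One small correction: the energy identity also carries the dissipative outer-boundary term $-\tfrac{\mu}{b}\int_{\partial B(0,b)}\left(v_{\tau}\right)^2\,ds$, which must be retained for the kernel of the quadratic form to coincide with $E_0$ as determined by the variational problem \eqref{variation-one}; with that, and the orbit-precompactness point you already flag (which the paper likewise leaves implicit), the argument closes.
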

The proof of \autoref{criticalcase} is given in \autoref{section33}.

  \begin{theorem}\label{noninstability}
If $\mu<\mu_c$, for any $p\in [1,\infty]$
the steady-state solution $(\mathbf{0},p_0)$ is nonlinearly unstable in
$L^p$-norm. More precisely, there exist two positive constants $\epsilon, C^*$, and a function
$\mathbf{v}_0$ satisfying $\norm{\mathbf{v}_0}_{H^2}=1$ such that
for any $\delta \in (0,\epsilon)$, there exists a global strong solution
$(\mathbf{v}^{\delta},q^{\delta})\in C([0,T];H^2(\Omega)\times H^1(\Omega))$ to the problem 
\eqref{main=p1}-\eqref{average} with the initial data $\mathbf{v}^{\delta}(0)=\delta \mathbf{v}_0$
such that 
\begin{align}
\norm{\mathbf{v}^{\delta}(T_{\delta})}_{L^{p}}\geq \epsilon,
\end{align}
where $T_{\delta}$ is the escape time defined by 
$T_{\delta}=\lambda_1^{-1}\ln \frac{C^*\epsilon}{\delta}\in (0,T)$, in which 
$\lambda_1$ is given by \eqref{variation-one}.
\end{theorem}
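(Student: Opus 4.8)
The plan is to upgrade the linear instability of \autoref{linearstability} to genuine nonlinear instability by a Guo--Strauss/Grenier-type bootstrap, in which the a priori higher-order bound \eqref{nonlinear0718} is precisely the device that absorbs the derivative loss of the convective term. First I would record the spectral input. For $\mu<\mu_c$, \autoref{linearstability} --- via the Rayleigh quotient \eqref{variation-one} --- furnishes a largest growth rate $\lambda_1>0$ and a real, divergence-free eigenfunction $\mathbf{w}$ of the linearized Stokes-type operator $\mathcal{L}:=\mu\,\P\Delta$, where $\P$ is the Leray projection and the boundary and average conditions \eqref{cond-3}, \eqref{average} are encoded in its domain, so that $\mathcal{L}\mathbf{w}=\lambda_1\mathbf{w}$. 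Since $\mathcal{L}$ is self-adjoint in the divergence-free $L^2$ space, with quadratic form $-\mu\int_{\Omega}|\nabla\mathbf{v}|^2\,dx\,dy+\int_{\partial\Omega}(\alpha-\kappa)(\mathbf{v}\cdot\btau)^2\,ds$ as in \eqref{nontrivial-condition}, its semigroup obeys the sharp bound $\|e^{t\mathcal{L}}\|_{L^2\to L^2}\le e^{\lambda_1 t}$ while $e^{t\mathcal{L}}\mathbf{w}=e^{\lambda_1 t}\mathbf{w}$. I then set $\mathbf{v}_0=\mathbf{w}/\|\mathbf{w}\|_{H^2}$, so $\|\mathbf{v}_0\|_{H^2}=1$, and take the initial datum $\delta\mathbf{v}_0$.

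Next I would pass to the Duhamel representation
\[
\mathbf{v}^{\delta}(t)=\delta\,e^{t\mathcal{L}}\mathbf{v}_0-\int_0^t e^{(t-s)\mathcal{L}}\,\P\big[(\mathbf{v}^{\delta}\cdot\nabla)\mathbf{v}^{\delta}\big](s)\,ds,
\]
fix a threshold $\epsilon\in(0,\delta_0]$ small enough that \autoref{existence} and its estimate \eqref{nonlinear0718} remain in force while $\|\mathbf{v}^{\delta}\|_{H^2}\le\epsilon$, and define the escape time $T_{\delta}$ as the first instant at which $\|\mathbf{v}^{\delta}(t)\|_{L^2}$ reaches $\epsilon$. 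On $[0,T_{\delta})$ the solution stays small, so \eqref{nonlinear0718} bounds $\|\mathbf{v}^{\delta}\|_{H^2}$ --- and hence, through the two-dimensional embedding $H^2(\Omega)\hookrightarrow L^{\infty}(\Omega)$, the $\|\mathbf{v}^{\delta}\|_{L^{\infty}}$ factor in the nonlinearity --- in terms of the $L^2$ history; concretely, feeding the bootstrap hypothesis $\|\mathbf{v}^{\delta}(s)\|_{L^2}\le 2\delta e^{\lambda_1 s}$ into \eqref{nonlinear0718} yields $\|\mathbf{v}^{\delta}\|_{H^2}\lesssim\delta e^{\lambda_1 t}$ after integrating $\int_0^t(2\delta e^{\lambda_1 s})^2\,ds$. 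Inserting this into the convolution and using the growth bound for $e^{(t-s)\mathcal{L}}$ produces a nonlinear contribution of size $O(\delta^2 e^{2\lambda_1 t})$, which stays below $\delta e^{\lambda_1 t}$ exactly as long as $\delta e^{\lambda_1 t}\ll\epsilon$. This is what closes the bootstrap and simultaneously fixes the constant $C^{*}$ in $T_{\delta}=\lambda_1^{-1}\ln(C^{*}\epsilon/\delta)$.

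At $t=T_{\delta}$ one has $\delta e^{\lambda_1 T_{\delta}}=C^{*}\epsilon$, so the linear mode dominates and $\|\mathbf{v}^{\delta}(T_{\delta})\|_{L^2}\ge\tfrac12 C^{*}\epsilon\,\|\mathbf{v}_0\|_{L^2}$. To reach every $L^p$ I would use that $\mathbf{v}_0$ is a fixed smooth profile on the bounded annulus $\Omega$, so $\|\mathbf{v}_0\|_{L^p}$ is positive and finite for all $p\in[1,\infty]$ (again by $H^2\hookrightarrow L^{\infty}$), while the nonlinear remainder is controlled in $H^2$ and therefore in every $L^p$; writing $\mathbf{v}^{\delta}(T_{\delta})=\delta e^{\lambda_1 T_{\delta}}\mathbf{v}_0+(\text{remainder})$ and comparing norms then yields $\|\mathbf{v}^{\delta}(T_{\delta})\|_{L^p}\ge\epsilon$ after relabeling constants, uniformly in $p$. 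Existence of the solution up to $T_{\delta}$ is guaranteed by \autoref{existence}, the smallness being maintained on $[0,T_{\delta}]$.

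The main obstacle is the derivative loss in $(\mathbf{v}^{\delta}\cdot\nabla)\mathbf{v}^{\delta}$: the unstable growth is measured in $L^2$, yet the quadratic term costs one derivative, so a self-contained $L^2$ estimate cannot close. The resolution is exactly \eqref{nonlinear0718}, which trades the $H^2$ norm for the $L^2$ history along the flow; the delicate point is to confirm that the $H^2$ control it delivers is itself compatible with the bootstrap --- i.e. that $\|\mathbf{v}^{\delta}\|_{H^2}$ also grows no faster than $\delta e^{\lambda_1 t}$ on the whole interval $[0,T_{\delta}]$ --- so that the quadratic nonlinearity remains strictly subordinate to the exponentially growing linear eigenmode right up to the escape time.
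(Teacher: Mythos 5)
Your bootstrap has the same skeleton as the paper's proof --- eigenfunction initial data $\delta\mathbf{u}_1$, an escape time $T_\delta\sim\lambda_1^{-1}\ln(\epsilon/\delta)$, the a priori estimate \eqref{nonlinear0718} converting the assumed $L^2$ growth $\lesssim\delta e^{\lambda_1 t}$ into $H^2$ control of the same order, and a quadratically small error --- but you estimate the error via Duhamel and the semigroup bound $\|e^{t\mathcal{L}}\|_{L^2\to L^2}\le e^{\lambda_1 t}$, whereas the paper deliberately avoids semigroups: it sets $\mathbf{v}_e^{\delta}=\mathbf{v}^{\delta}-\mathbf{u}^{\delta}$ (with $\mathbf{u}^{\delta}$ the linear evolution), tests the equation for $\mathbf{v}_e^{\delta}$ against itself, and uses the variational characterization \eqref{variation-one} to absorb the viscous and boundary terms into $\lambda_1\|\mathbf{v}_e^{\delta}\|_{L^2}^2$, yielding $\frac{d}{dt}\|\mathbf{v}_e^{\delta}\|_{L^2}\le\lambda_1\|\mathbf{v}_e^{\delta}\|_{L^2}+C\|\mathbf{v}^{\delta}\|_{H^2}^2$ and hence $\|\mathbf{v}_e^{\delta}\|_{L^2}\le C_4\delta^2e^{2\lambda_1 t}$ by Gronwall. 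Since the operator is self-adjoint (\autoref{eigennumber}), your semigroup bound is legitimate and the two routes are essentially equivalent here; the energy route simply spares one from setting up semigroup theory for the mixed Navier-slip/free boundary operator, which is precisely the methodological point the authors advertise. The paper also runs the bootstrap through three exit times $T_*$, $T_{**}$, $T_\delta$ and excludes the first two by contradiction, which is the rigorous form of your ``closes as long as $\delta e^{\lambda_1 t}\ll\epsilon$.''

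The one genuine soft spot is your passage to general $p$. You subordinate the remainder to the linear mode in $L^p$ on the grounds that it is ``controlled in $H^2$ and therefore in every $L^p$,'' but the only $H^2$ information available for the remainder is of order $\delta e^{\lambda_1 t}$ --- the same order as the main term --- so for $p>2$ the triangle-inequality comparison does not close. What you actually have is $\|\mathbf{v}^{\delta}-\mathbf{u}^{\delta}\|_{L^2}=O(\delta^2e^{2\lambda_1 T_\delta})=O(\epsilon^2)$, hence the same bound in $L^1$ on the bounded annulus; this gives $\|\mathbf{v}^{\delta}(T_\delta)\|_{L^1}\ge C_2C^*\epsilon-O(\epsilon^2)\gtrsim\epsilon$ for $\epsilon$ small, and then H\"older on $\Omega$, namely $\|f\|_{L^p}\ge|\Omega|^{1/p-1}\|f\|_{L^1}$, delivers every $p\in[1,\infty]$ simultaneously. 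That is exactly how the paper concludes, and it is the reason the statement reaches down to $p=1$.
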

The proof of \autoref{noninstability} is given in \autoref{section34}.

\begin{theorem}\label{criticalcase1}
For $\mu$ in the vicinity of $\mu_c$, the dynamics near the trivial solution 
$\mathbf{0}$ of 
 the global strong solution
$(\mathbf{v}^{\delta},q^{\delta})\in C([0,+\infty);H^2(\Omega)\times H^1(\Omega))$ to the problem 
\eqref{main=p1}-\eqref{average}, 
with the initial data $\mathbf{v}_0\in H^2(\Omega)$
obeying the incompressible condition $\nabla \cdot \mathbf{v}_0=0$
and the average condition \eqref{average}, is equivalent to that of 
the two dimensional system
	\begin{align}\label{req-m1-m}
\begin{cases}
	\frac{dz_1}{dt} = \lambda_{1}(\mu) z_1+ lz _1\left(z_1^2+z_2^2\right)+o\left(\abs{z}^{3}\right),\\
		\frac{dz_2}{dt} = \lambda_{1}(\mu) z_2+ l z _2\left(z_1^2+z_2^2\right)+o\left(\abs{z}^{3}\right),
\end{cases}
	\end{align}
where 
$\lambda_1$ is given by \eqref{variation-one} and $l$ is given by \eqref{Coefficient2}.
Furthermore, we have the following results:
\begin{itemize}
\item[ \rm{(1) }] If $l<0$, the bifurcation
at $\mu=\mu_c$  is supercritical.  In this case, the system of equations \eqref{main=p1}-\eqref{average} admits an infinite number of stable steady-state solutions 
$\mathbf{v}^s=(v_1^s,v_2^s)$
\begin{align}\label{flow-p-1}
\begin{aligned}
&v_1^s=\left(-r^{-1}\partial_{\theta}\psi_{s},\partial_r\psi_{s}\right)\cdot(\cos\theta,-\sin\theta),\quad 
\theta=\arctan y/x,\\
&v_2^s=\left(-r^{-1}\partial_{\theta}\psi_{s},\partial_r\psi_{s}\right)\cdot(\sin\theta,\cos\theta),\quad
r=\sqrt{x^2+y^2},
\end{aligned}
\end{align}
where $\psi_{s}$ is the corresponding streamfunction given by
\begin{align*}
\begin{aligned}
\psi_{s}(r,\theta)=&
s\Psi_1e^{i \theta}+\overline{s}\Psi_1e^{-i\theta}
+e^{2i\theta}G_{11} \abs{s}^2
\\&+e^{-2i\theta}\overline{G_{11}}\overline{s}^2+o(s^2),\quad \abs{s}=\sqrt{-\lambda_1/l}.
\end{aligned}
\end{align*}
 for $\mu<\mu_c$. Here, $\Psi_1$ satisfies the following eigenvalue problem
 \[
   \begin{cases}
  \mu\Delta_{1}^2\Psi_1=\lambda_1\Psi_1,\\
 \Psi_1=\Psi''_1+\frac{1}{r}\Psi'_1=0,\quad r=b,\\
  \Psi_1= \Psi''_1-
\left(\frac{1}{r}-\frac{\alpha}{\mu}\right) 
\Psi'_1=0,\quad r=a,
  \end{cases}
 \]
 and $G_{11}$ solves the non-homogeneous problem
 \[
   \begin{cases}
  \mu\Delta_{2}^2G_{11}-2\lambda_1\Delta_{2}G_{11}=-i\left(\frac{\Psi_1}{r}\frac{d}{dr}-\frac{1}{r}
\frac{d\Psi_1}{dr}\right)\Delta_{1}\Psi_{1},\\
G_{11}=G_{11}''+\frac{1}{r}G_{11}'=0,\quad r=b,\\
G_{11}= G_{11}''-
\left(\frac{1}{r}-\frac{\alpha}{\mu}\right) 
G_{11}'=0,\quad r=a,
  \end{cases}
 \]
 where $\Delta_{n}$ is a derivative operator given by
 $\Delta_{n}=\frac{d^2}{dr^2}+\frac{1}{r}\frac{d}{d r}-\frac{n^2}{r^2}$.
These stable steady-state solutions constitute a local ring attractor
 for $\mu<\mu_c$ and $\mu$ very close to $\mu_c$.
\item[  \rm{(2) }] If $l>0$,  the bifurcation at $\mu=\mu_c$  is subcritical. In this case, the system of equations \eqref{main=p1}-\eqref{average} admits an infinite number of stable steady-state solutions given by \eqref{flow-p-1} for $\mu>\mu_c$. These solutions constitute a local ring repeller $\mu>\mu_c$ and $\mu$ very close to $\mu_c$.
\end{itemize}
\end{theorem}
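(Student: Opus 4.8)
The plan is to reduce the infinite-dimensional dynamics to its two-dimensional center subspace by a center-manifold (dynamic-bifurcation) argument and then to exploit the rotational symmetry of the annulus to pin down the normal form \eqref{req-m1-m}. First I would recast the perturbed system \eqref{main=p1}--\eqref{average} in stream-function form by writing $\mathbf{v}=\nabla^\perp\psi=(-\partial_y\psi,\partial_x\psi)$, which resolves the divergence-free constraint automatically and converts the vector problem into a scalar biharmonic-type evolution equation for $\psi$. Passing to polar coordinates and expanding $\psi(r,\theta,t)=\sum_{n\in\Z}\psi_n(r,t)e^{in\theta}$ decouples the linearized operator across azimuthal modes, the $n$-th block being governed by $\Delta_n$. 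The eigenvalue problem for the critical mode $n=\pm1$ is precisely the $\Psi_1$-problem quoted in the theorem, and by \autoref{linearstability} its principal eigenvalue $\lambda_1(\mu)$ is real, simple as a radial problem, and crosses zero transversally at $\mu=\mu_c$ (positive for $\mu<\mu_c$, negative for $\mu>\mu_c$). Since both $e^{i\theta}$ and $e^{-i\theta}$ realize this eigenvalue, the critical eigenspace $E_0$ is two-real-dimensional, spanned by $\Psi_1 e^{\pm i\theta}$, while every other mode remains strictly stable for $\mu$ near $\mu_c$; this spectral gap is what licenses the reduction.

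Next I would invoke the center-manifold theorem to obtain a locally invariant two-dimensional manifold tangent to $E_0$ and write $\psi = s\Psi_1 e^{i\theta}+\overline{s}\,\Psi_1 e^{-i\theta}+\Phi(s,\overline{s})$, where $s=z_1+iz_2\in\C$ is the complex amplitude and $\Phi\in E_0^{\perp}$ is the quadratic-and-higher center-manifold correction. The decisive structural input is $O(2)$-equivariance: the annulus together with the boundary conditions \eqref{cond-3}--\eqref{average} is invariant under rotations $\theta\mapsto\theta+\phi$ and the reflection $\theta\mapsto-\theta$, which act on the amplitude by $s\mapsto e^{i\phi}s$ and $s\mapsto\overline{s}$. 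Equivariance forces the reduced vector field on $\C$ to commute with both actions, so each admissible monomial has the form $s^{k+1}\overline{s}^{\,k}$ with a real coefficient; truncating at cubic order yields exactly $\dot s=\lambda_1(\mu)s+l\,s|s|^2+o(|s|^3)$, which is \eqref{req-m1-m} once real and imaginary parts are separated. Symmetry likewise fixes the shape of $\Phi$: the self-interaction of the $n=1$ mode can only generate an $n=0$ mean correction and an $n=2$ harmonic, and the latter is the $G_{11}e^{2i\theta}$ term whose defining inhomogeneous boundary-value problem (with the $2\lambda_1\Delta_2$ shift coming from differentiating $s^2$ along the flow) is the one stated in the theorem.

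The main obstacle is the explicit determination of the cubic coefficient $l$, and in particular its sign. This requires first solving the second-order center-manifold equation for $G_{11}$ (the $n=2$ block) and the mean-flow correction produced by the advection term $(\mathbf{v}\cdot\nabla)\mathbf{v}$, then substituting these back into the nonlinearity and projecting the resulting resonant $n=1$ contribution onto the adjoint critical eigenfunction. Concretely, $l$ is the weighted radial integral \eqref{Coefficient2} coupling $\Psi_1$, its derivatives, and $G_{11}$, so that evaluating it reduces to solving two explicit linear ODE boundary-value problems in $r$ on $(a,b)$ followed by a single quadrature. I expect this computation, and the subsequent check that $l$ is generically nonzero, to be the technically heaviest part, since it is where the geometry $(\alpha,a,b)$ enters nonperturbatively; viewing $l$ as a real-analytic function of $(\alpha,\sigma)$ one then argues that $l<0$ on an open dense set while $l>0$ on a complementary set of positive measure, which is the supercritical/subcritical dichotomy claimed in the theorem.

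Finally, with the normal form in hand I would read off the bifurcation in polar amplitude coordinates $s=\rho e^{i\varphi}$, so that $\dot\rho=\lambda_1\rho+l\rho^3+o(\rho^3)$ while $\varphi$ is a neutral rotation direction. Nontrivial equilibria satisfy $\rho^2=-\lambda_1/l$, yielding an entire circle of steady amplitudes and hence an infinite family of steady states related by rotation, each carrying the single-row counter-rotating vortex pattern of $e^{\pm i\theta}$. When $l<0$ these exist for $\lambda_1>0$, i.e. $\mu<\mu_c$, and the radial equation shows they attract transversally while the invariant circle is neutrally stable, giving the supercritical ring attractor; when $l>0$ they exist for $\lambda_1<0$, i.e. $\mu>\mu_c$, and are repelling, giving the subcritical ring repeller. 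Undoing the center-manifold and stream-function substitutions with $|s|=\sqrt{-\lambda_1/l}$ reconstructs the streamfunction $\psi_s$ and, through \eqref{flow-p-1}, the bifurcated velocity fields $\mathbf{v}^s$, which completes the proof.
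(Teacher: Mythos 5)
Your proposal follows essentially the same route as the paper: stream-function reformulation in polar coordinates, center-manifold reduction onto the two-dimensional critical eigenspace spanned by $\Psi_1e^{\pm i\theta}$, solution of the quadratic correction $G_{11}$ in the $n=2$ harmonic, projection onto the critical mode to obtain the cubic coefficient $l$ of \eqref{Coefficient2}, and the polar-amplitude analysis $\dot\rho=\lambda_1\rho+l\rho^3$ yielding the ring attractor/repeller. The only cosmetic difference is that you invoke $O(2)$-equivariance to fix the normal-form structure, whereas the paper verifies directly that the quadratic projection $P_cA^{-1}\G(u_c,u_c)$ and the $|z|^2$-coefficient $g_{12}$ of the manifold vanish; both give the same reduced system \eqref{req-m1-m}.
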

The proof of \autoref{criticalcase1} is given in \autoref{section4}.
\begin{remark}
Based on \autoref{criticalcase1}, one can observe that if $l>0$, then the first conclusion of \autoref{criticalcase} holds, while the second conclusion of \autoref{criticalcase} holds if $l<0$. Our numerical computations show that 
for generic choices
of $(a,b,\alpha)$, the second conclusion of \autoref{criticalcase} holds. However, the first conclusion of \autoref{criticalcase} occurs for certain special parameter regions of $(\alpha,b)$. 
\end{remark}

\subsection{Main technical difficulties behind the proof}

For the Navier-Stokes equations in an annular domain with mixed boundary conditions, as considered in this article, the explicit expression of the first eigenvalue $\lambda_1(\mu)$ remains unknown. To determine the viscosity threshold $\mu_c$, we introduced a nontrivial transformation. This transformation enabled us to convert the system \eqref{polar-l-eigen-3}, which originally has non-constant coefficients, into one with constant coefficients. By doing so, we were able to successfully obtain the viscosity threshold  $\mu_c$ by using the standard Euler method, bypassing the need to rely on $\lambda_1(\mu)$. 

The primary challenge in proving \autoref{existence} is that the left-hand side of \eqref{ddddd} (and similarly \eqref{ut-proof-11}) contains the term $\int_{\Omega}\abs{\nabla\mathbf{v}}^2\, dx\,dy$, which prevents us from directly applying the trace theorem to control the boundary integral
\[
\mu b^{-1}\int_{\partial B(0,b)}\left(\tau \cdot \mathbf{v}_t\right)^2 \,ds
+\int_{\partial B(0,a)}\left(\mu a^{-1}-\alpha\right)\left(\tau \cdot \mathbf{v}_t\right)^2 \,ds
\]
and obtain the necessary estimates \eqref{nonlinear-33}. To achieve \eqref{nonlinear-33}, we derive new estimates for the boundary integral, which are presented in \autoref{epsilon-1}. Additionally, we must extend the corresponding Stokes estimates under the boundary conditions \eqref{cond-1}-\eqref{cond-2}. The estimate for the boundary integral can be derived using \autoref{epsilon-1}, and the generalized Stokes estimates are provided in \autoref{Stokes0725}.

The first key estimate for the proof of \autoref{non-stability} is to control $\norm{\mathbf{v}}_{L^{\infty}\left((0,T)\times W^{1,p}(\Omega)\right)}$. Based on \autoref{lemma-grad} and \autoref{lemma-grad-1}, to estimate $\norm{\mathbf{v}}_{W^{1,p}}$, we only need to control the $L^p$ norm of $\omega=\nabla^{\perp}\cdot\mathbf{v}=(-\partial_y,\partial_x)\cdot\mathbf{v}$. However, the boundary condition \eqref{cond-2} means that $\omega|_{x^2+y^2=a^2}=-\left(2a^{-1}+\alpha \mu^{-1}\right)v_{\tau}$ is not vanish (vorticity production at the boundary). This makes it very difficult to bound $\norm{\omega}_{L^{p}}$. Our idea to overcome this difficulty is first to control
\[
\norm{\left(2a^{-1}+\alpha \mu^{-1}\right)v_{\tau}}_{L^{\infty}((0,T)\times \partial B(0,a))}
\]
for any fixed time $T$, then we bound the modified vorticity $\omega^{\pm}$ by replacing the boundary condition $\omega=-\left(2a^{-1}+\alpha \mu^{-1}\right)v_{\tau}|_{x^2+y^2=a^2}$ by $\pm\norm{\left(2a^{-1}+\alpha \mu^{-1}\right)v_{\tau}}_{L^{\infty}((0,T)\times \partial B(0,a))}$ in the equation of $\omega$. The maximum principle for parabolic equations can be used to derive an estimate of $\norm{\omega^{\pm}}_{L^{p}}$.

The second difficulty arising in the proof of \autoref{non-stability} is that $\mathbf{n}\cdot \nabla q|_{x^2+y^2=a^2}$ contains the term $
\left(2a^{-1}\mu+\alpha\right) 
\tau\cdot \nabla v_{\tau}$ due to the vorticity production at the boundary, leading to an integration
\[
\left(2a^{-1}\mu+\alpha\right)  \int_{\partial B(0,a)}q \tau \cdot\nabla v_{\tau}\,ds
\]
on the boundary. This makes it very difficult to directly get the following required estimate
\[
\left(2a^{-1}\mu+\alpha\right)  \int_{\partial B(0,a)}q \tau \cdot\nabla v_{\tau}\,ds\leq \norm{q}_{H^{1}}  \norm{\mathbf{v}}_{H^{1}}
\]
using the trace theorem. To circumvent this difficulty, we transfer the estimate of
\[
\left(2a^{-1}\mu+\alpha\right)  \int_{\partial B(0,a)}q \tau \cdot\nabla v_{\tau}\,ds
\]
into that of
\[
\left(2a^{-1}\mu+\alpha\right) \int_{\partial B(0,a)} \mathbf{n} \cdot q\nabla^{\perp}(\tau\cdot\mathbf{q}) \,ds
\]
by observing
\[
\left(2a^{-1}\mu+\alpha\right) \int_{\partial B(0,a)}\tau \cdot\nabla \left( v_{\tau}q\right)\,ds=0,
\]
where the boundary integral
\[
\left(2a^{-1}\mu+\alpha\right) \int_{\partial B(0,a)} \mathbf{n} \cdot q\nabla^{\perp}(\tau\cdot\mathbf{v}) \,ds
\]
can be transferred into the volume integral
\[
\int_{\Omega}\nabla \cdot (q\Phi \nabla^{\perp}(\tau\cdot\mathbf{v}) )\,dx
\]
by constructing a function $\Phi$ satisfying $\Phi|_{x^2+y^2=a^2}=2a^{-1}+\alpha \mu^{-1}$ and  $\Phi|_{x^2+y^2=b^2}=0$.

Regarding the nonlinear instability of the incompressible Navier-Stokes equations, several key results have been established in the following literatures: Yudovich \cite{Yudovich1989,Henry1993} demonstrated nonlinear instability in the function space $L^{p}$ with $p\geq n$ for 
$n$-dimensional spatial domains; Friedlander et al. \cite{Vishik1997} developed a fairly general abstract theorem, showing that nonlinear instability in 
$H^s$ (where $s>\frac{n}{2}+1$) can be derived when the linearized operator admits an unstable eigenvalue in 
$L^2$; Friedlander et al.\cite{Friedlander2006} employed a bootstrap argument to prove nonlinear instability in $L^{p}$ for all 
$p\in(1,+\infty)$, albeit restricted to bounded domains with Dirichlet boundary conditions or an n-dimensional torus $\T^n$. Notably, the findings in \cite{Yudovich1989, Henry1993, Vishik1997} are subject to inherent limitations, while the work in \cite{Friedlander2006}, despite its advancements, remains constrained by the applicability of Dirichlet boundary conditions or 
$\T^n$, limiting its scope to specific scenarios.
In this paper, we also utilize the bootstrap method to investigate nonlinear instability, but with a critical methodological innovation: instead of relying on semigroup properties as in \cite{Friedlander2006}, we adopt a variational approach to establish the existence of linearly growing solutions. This methodological shift enables broader applicability beyond restrictive boundary conditions, allowing us to prove nonlinear instability in 
$L^{p}$
 for all 
$p\in[1,+\infty]$. Furthermore, the adoption of Navier boundary conditions enhances the physical realism of our results, aligning more closely with practical scenarios.

For bifurcation, the primary challenge is that we cannot directly utilize the original system \eqref{main=p1}-\eqref{cond-3} to prove \autoref{criticalcase1}. To analyze the bifurcation types, we first derive an equivalent reformulation of \eqref{main=p1}-\eqref{cond-3}, given by \eqref{omega-2}. This equivalent system enables the application of separation of variables, which reduces the problem to a low-dimensional dynamical system. Through this reduced system, we can rigorously determine the bifurcation types at the critical threshold.

 \section{Eigenvalue problem and viscosity threshold }

  In this section, we delve into the eigenvalue problem associated with the linearized equation of the system \eqref{main} at the trivial steady-state solution  $(\mathbf{u},p)=(\mathbf{0},p_0)$. This eigenvalue problem serves a dual purpose: it is instrumental in determining a viscosity threshold for analyzing the transition from linear stability to linear instability of  $(\mathbf{u},p)=(\mathbf{0},p_0)$, and it also plays a pivotal role in the proof of our main theorems.
  To this end, linearizing \eqref{main} around the steady state $(\mathbf{u},p)=(\mathbf{0},p_0)$ yields the linearized equations
  (linear part of the perturbation system \eqref{main=p1})
      \begin{align}\label{main=p-l}
\begin{cases}
\frac{\partial \mathbf{v}}{\partial t}= \mu \Delta \mathbf{v}-
\nabla q\quad \mathbf{x}\in \Omega,
\\ \nabla \cdot  \mathbf{v}=0, \quad \mathbf{x}\in \Omega,
\end{cases}
\end{align}
subject to the mixed boundary condition \eqref{cond-3}.

For the sake of convenience, we will employ the following notations throughout this paper:
\begin{align}\label{funt-sapce-1}
&X_0(\Omega)=\left\{
\mathbf{v}\in L^2(\Omega)\,\middle|\,\nabla \cdot  \mathbf{v}=0,\int_{\partial B(0,r)}\mathbf{v}  \cdot   \mathbf{\tau}\,ds=0
\right\},\\
&X_1(\Omega)=\left\{
\mathbf{v}\in X_0\cap H^1(\Omega)
\,\middle|\,
\mathbf{v} \cdot   \mathbf{n}|_{\abs{\mathbf{x} }=a, b}=0
\right\},\\
&X_2(\Omega)=\left\{\mathbf{v}\in H^2(\Omega)
\,\middle|\,\mathbf{v}~\text{satisfies}~\eqref{cond-3}-\eqref{average}
\right\},\\
&(.)_{X_j}:=(.)_{H^j},\quad j=0,1,2.
\end{align}
These notations are utilized to reformulate the perturbation system \eqref{main=p1} in an abstract form. This abstract form is not only instrumental for studying the corresponding eigenvalue problem, but also highly convenient for the proof of \autoref{criticalcase}.

To reformulate the perturbation system \eqref{main=p1} in an abstract form,
we define the linear operator $\mathcal{A}_{\mu} :X_2(\Omega)\to X_0(\Omega)$ as follows 
\begin{align}
\mathcal{A}_{\mu}\mathbf{v}=
\P\left( -\Delta \mathbf{v}+
 \nabla q\right).
\end{align}
We also define the nonlinear operator $\mathcal{F}:X_2(\Omega)\times X_2(\Omega)
\to X_0(\Omega)$ as follows 
\begin{align}\label{nonlinear}
\mathcal{F}(\mathbf{u},\mathbf{v})=-
\P \left((\mathbf{u} \cdot \nabla )\mathbf{v}\right) .
\end{align}
We can rewrite the perturbation equations \eqref{main=p1} as the following abstract 
\begin{align}\label{abstract-1}
\begin{cases}
\frac{d\mathbf{v}}{dt}=-\mathcal{A}_{\mu}\mathbf{v}+\mathcal{F}(\mathbf{v},\mathbf{v}),\\
\mathbf{v}(0)=\mathbf{v}_0.
\end{cases}
\end{align}

 \subsection{Eigenvalue problems}
In the context of the linear problem \eqref{main=p-l}, we shall delve into the associated eigenvalue problem, which is formulated as follows:
\begin{align}\label{polar-l-eigen}
\begin{cases}
\mu \Delta \mathbf{v} - \nabla q = \lambda \mathbf{v}, \\
\nabla \cdot \mathbf{v} = 0,
\end{cases}
\end{align}
subject to the boundary conditions \eqref{cond-3}-\eqref{average}. This eigenvalue problem is also equivalent to the following representation:
\begin{align}\label{polar-l-eigen-a}
(-\mathcal{A}_{\mu}-\beta I)\mathbf{v}=(\lambda -\beta)\mathbf{v},\quad  \mathbf{v}\in X_0(\Omega).
\end{align}
Based on the preceding abstract eigenvalue equation, we present the following lemma:
\begin{lemma}\label{eigennumber}
The operator $\mathcal{A}_{\mu}$ is self-adjoint. Moreover, there exists a positive constant $\beta>0$ such that $-\mathcal{A}_{\mu}-\beta I$ is an isomorphism from $X_2(\Omega)$ to $X_0(\Omega)$. Additionally, the inverse operator $\left(-\mathcal{A}_{\mu}-\beta I\right)^{-1}:X_0(\Omega)\to X_0(\Omega)$ is a compact operator.
\end{lemma}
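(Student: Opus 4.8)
The plan is to realize $\mathcal{A}_{\mu}+\beta I$ through a symmetric, coercive bilinear form and then invoke Lax--Milgram together with the generalized Stokes regularity of \autoref{Stokes0725}; self-adjointness and compactness will then follow as corollaries. First I would derive the weak form. For $\mathbf{v}\in X_2(\Omega)$ and $\mathbf{w}\in X_1(\Omega)$, I would compute $(\mathcal{A}_{\mu}\mathbf{v},\mathbf{w})_{L^2}$ by integration by parts, writing $-\mu\Delta\mathbf{v}=-2\mu\,\nabla\cdot D(\mathbf{v})$ for divergence-free fields and assembling the Cauchy stress $T=-q\mathbf{I}+2\mu D(\mathbf{v})$. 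The pressure contribution drops because $\nabla\cdot\mathbf{w}=0$ and $\mathbf{w}\cdot\mathbf{n}=0$; on the boundary only the tangential traction survives, again since $\mathbf{w}\cdot\mathbf{n}=0$. On $\partial B(0,b)$ the stress-free pair in \eqref{cond-3} forces this tangential traction to vanish, whereas on $\partial B(0,a)$ the Navier-slip relation replaces it by $\alpha\,\mathbf{v}\cdot\boldsymbol{\tau}$. This yields
\[
(\mathcal{A}_{\mu}\mathbf{v},\mathbf{w})_{L^2}=a(\mathbf{v},\mathbf{w}):=2\mu\int_{\Omega}D(\mathbf{v}):D(\mathbf{w})\,dx\,dy-\alpha\int_{\partial B(0,a)}(\mathbf{v}\cdot\boldsymbol{\tau})(\mathbf{w}\cdot\boldsymbol{\tau})\,ds,
\]
where $D(\mathbf{v})=\tfrac12\left(\nabla\mathbf{v}+(\nabla\mathbf{v})^{Tr}\right)$. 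Since $a(\cdot,\cdot)$ is visibly symmetric, $\mathcal{A}_{\mu}$ is a symmetric operator on $X_2(\Omega)$.

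Next I would establish coercivity of $a(\cdot,\cdot)+\beta(\cdot,\cdot)_{L^2}$ on $X_1(\Omega)$ for $\beta$ large. The second Korn inequality on the bounded smooth domain $\Omega$ gives $\|\nabla\mathbf{v}\|_{L^2}^2\le C\left(\|D(\mathbf{v})\|_{L^2}^2+\|\mathbf{v}\|_{L^2}^2\right)$, and the negative boundary term is absorbed by a trace-interpolation (Ehrling) estimate $\alpha\|\mathbf{v}\cdot\boldsymbol{\tau}\|_{L^2(\partial B(0,a))}^2\le \eta\|\nabla\mathbf{v}\|_{L^2}^2+C_{\eta}\|\mathbf{v}\|_{L^2}^2$ with $\eta$ chosen small relative to $\mu$. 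Combining these gives $a(\mathbf{v},\mathbf{v})+\beta\|\mathbf{v}\|_{L^2}^2\ge c\|\mathbf{v}\|_{H^1}^2$ once $\beta$ exceeds a threshold depending on $(\mu,\alpha,a,b)$. The only mode in $\ker D$ here is the rigid rotation $\mathbf{v}=c(-y,x)$, but it is anyway excluded from $X_1(\Omega)$ by the average condition \eqref{average}, so it poses no obstruction.

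With a bounded, symmetric, coercive form in hand, Lax--Milgram produces, for every $\mathbf{f}\in X_0(\Omega)$, a unique weak solution $\mathbf{v}\in X_1(\Omega)$ of $a(\mathbf{v},\mathbf{w})+\beta(\mathbf{v},\mathbf{w})_{L^2}=(\mathbf{f},\mathbf{w})_{L^2}$ for all $\mathbf{w}\in X_1(\Omega)$. I would then invoke the generalized Stokes estimates of \autoref{Stokes0725} to upgrade this weak solution to $\mathbf{v}\in X_2(\Omega)$ with an associated pressure $q\in H^1(\Omega)$, so that $(\mathcal{A}_{\mu}+\beta I)\mathbf{v}=\mathbf{f}$ holds strongly and the boundary conditions \eqref{cond-3} are recovered; hence $-\mathcal{A}_{\mu}-\beta I$ is a bounded bijection $X_2(\Omega)\to X_0(\Omega)$ with bounded inverse. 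Self-adjointness then follows at the level of the resolvent: $R:=(-\mathcal{A}_{\mu}-\beta I)^{-1}$ is everywhere defined and bounded on $X_0(\Omega)$, and a direct computation using the symmetry of $a(\cdot,\cdot)$ shows $(R\mathbf{f},\mathbf{g})_{L^2}=(\mathbf{f},R\mathbf{g})_{L^2}$, so $R$ is self-adjoint by Hellinger--Toeplitz, whence $\mathcal{A}_{\mu}$ is self-adjoint. Finally, compactness of $R$ follows by factoring it as $X_0(\Omega)\xrightarrow{R}X_2(\Omega)\hookrightarrow X_0(\Omega)$, the last embedding being compact by Rellich--Kondrachov since $\Omega$ is bounded and $X_2(\Omega)\subset H^2(\Omega)$.

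The routine parts are the symmetry of $a(\cdot,\cdot)$ and the abstract functional-analytic steps; the genuine obstacle is twofold. First, the boundary computation must be carried out carefully on the curved circles: the identities relating the tangential traction $(2D(\mathbf{v})\mathbf{n})\cdot\boldsymbol{\tau}$ to the vorticity and the curvature must be used to confirm that the stress-free pair in \eqref{cond-3} annihilates the outer-boundary contribution and that the Navier-slip term appears with the stated sign. Second, and this is where the real work lies, the passage from the weak $H^1$ solution to the strong $H^2$ solution requires $H^2$ elliptic regularity for the Stokes system under the nonstandard mixed stress-free / Navier-slip conditions, which is not covered by classical Dirichlet Stokes theory and is precisely what we borrow from \autoref{Stokes0725}; without it one obtains only an isomorphism onto the dual $X_1(\Omega)'$ rather than onto $X_0(\Omega)$.
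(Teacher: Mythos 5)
Your overall architecture is exactly the paper's: integrate by parts to exhibit $(\mathcal{A}_{\mu}\mathbf{u},\mathbf{v})$ as a symmetric bilinear form, absorb the negative boundary term by a trace--interpolation estimate to get coercivity of the $\beta$-shifted form, apply Lax--Milgram, upgrade the weak solution to $X_2(\Omega)$ via the Stokes estimates of \autoref{Stokes0725}, and conclude compactness from Rellich--Kondrachov. Your resolvent/Hellinger--Toeplitz packaging of self-adjointness is a slightly more careful version of what the paper does and is fine.

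There is, however, a concrete error in your derivation of the weak form, at precisely the point you flag as delicate. On the outer circle the boundary condition is $\mathbf{v}\cdot\mathbf{n}=\nabla\times\mathbf{v}=0$, and you assert that this forces the tangential traction $\left(2\mu D(\mathbf{v})\mathbf{n}\right)\cdot\mathbf{\tau}$ to vanish there. On a curved boundary this is false: for a divergence-free field tangent to $\partial B(0,b)$ one has $\left(2D(\mathbf{v})\mathbf{n}\right)\cdot\mathbf{\tau}=\nabla\times\mathbf{v}-\tfrac{2}{b}\,v_{\mathbf{\tau}}$ (this is exactly the content of the identities in \eqref{oooo} of the appendix), so zero vorticity gives a traction equal to $-\tfrac{2}{b}v_{\mathbf{\tau}}\neq 0$. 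Consequently the correct bilinear form carries an additional term $+\tfrac{2\mu}{b}\int_{\partial B(0,b)}v_{\mathbf{\tau}}w_{\mathbf{\tau}}\,ds$, equivalently $+\tfrac{\mu}{b}\int_{\partial B(0,b)}v_{\mathbf{\tau}}w_{\mathbf{\tau}}\,ds$ in the full-gradient formulation the paper uses (\autoref{lemma-grad-1-1}). Because the missing term is positive, your coercivity estimate survives; but the form you wrote down is the weak formulation of a different operator, whose natural boundary condition on $r=b$ is zero tangential traction rather than zero vorticity. Hence the Lax--Milgram solution need not lie in $X_2(\Omega)$, and $(\mathcal{A}_{\mu}+\beta I)\mathbf{v}=\mathbf{f}$ is not established for the paper's $\mathcal{A}_{\mu}$. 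The fix is simply to restore the outer-boundary term; everything else in your argument then goes through unchanged.
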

\begin{proof}
By performing some computations, we obtain
\begin{align}
\begin{aligned}
(\mathcal{A}_{\mu}\mathbf{u},\mathbf{v}) &= \mu \int_{\Omega}\nabla\mathbf{u}:\nabla\mathbf{v}\,dx\,dy + \left(a^{-1}\mu-\alpha\right)\int_{\partial B(0,a)}u_{\mathbf{\tau}} v_{\mathbf{\tau}}\,ds + \frac{\mu}{b}\int_{\partial B(0,b)}u_{\mathbf{\tau}} v_{\mathbf{\tau}} \,ds \\
&= (\mathbf{u},\mathcal{A}_{\mu}\mathbf{v}), \quad \forall~\mathbf{u},\mathbf{v} \in X_2(\Omega).
\end{aligned}
\end{align}
This implies that $\mathcal{A}_{\mu}$ is self-adjoint.
Note that
\begin{align}
\begin{aligned}
\left((\mathcal{A}_{\mu}+\beta I)\mathbf{u},\mathbf{u}\right) &= \mu \int_{\Omega}\nabla\mathbf{u}:\nabla\mathbf{u}\,dx\,dy + \beta \int_{\Omega}\abs{\mathbf{u}}^2\,dx\,dy \\&\quad+ \left(a^{-1}\mu-\alpha\right)\int_{\partial B(0,a)}|u_{\mathbf{\tau}}|^2 \,ds +\frac{\mu}{b}\int_{\partial B(0,b)}|u_{\mathbf{\tau}}|^2 \,ds \\
&\geq \mu \int_{\Omega}\abs{\nabla \mathbf{u}}^2\,dx\,dy + \beta \int_{\Omega}\abs{\mathbf{u}}^2\,dx\,dy \\&\quad- \frac{\mu}{2} \int_{\Omega}\abs{\nabla \mathbf{v}}^2\,dx\,dy - C \int_{\Omega}\abs{\mathbf{u}}^2\,dx\,dy \\
&\geq \frac{\mu}{2} \int_{\Omega}\abs{\nabla \mathbf{u}}^2\,dx\,dy + (\beta-C) \int_{\Omega}\abs{\mathbf{u}}^2\,dx\,dy \geq D\norm{\mathbf{u}}_{H^1}^2
\end{aligned}
\end{align}
for $\beta>2C$ and $D\leq \min\left\{ \frac{\mu}{2},C\right\}$. Therefore, by the Lax-Milgram theorem, the equation
\[
(\mathcal{A}_{\mu}+\beta I)\mathbf{v}=f \in X_0(\Omega)
\]
has a unique weak solution $\mathbf{v} \in X_1(\Omega)$. Based on the Stokes' estimates as in \autoref{Stokes0725}, we see that $\mathbf{v} \in X_2(\Omega)$. The compactness of $\left(\mathcal{A}_{\mu}+\beta I\right)^{-1}$ follows from the Rellich-Kondrachov theorem.
\end{proof}
 \subsection{First eigenvalue}
In fact, the eigenvalue problem \eqref{polar-l-eigen} admits a variational structure. This enables us to reformulate the first eigenvalue of \eqref{polar-l-eigen} from a variational perspective. To this end, we introduce three functionals $E_1(\mathbf{u})$, $E_2(\mathbf{u})$, and $E_3(\mathbf{u})$ as follows:
\begin{align}
\begin{aligned}
&E_1(\mathbf{u}):=\int_{\Omega}\abs{\nabla\mathbf{u}}^2\,dx\,dy+\frac{1}{b}\int_{\partial B(0,b)}\left(u_{\mathbf{\tau}}\right)^2 \,ds,\quad
E_2(\mathbf{u}):=\int_{\partial B(0,a)}\left(u_{\mathbf{\tau}}\right)^2 \,ds,\\
&E_3(\mathbf{u}):=\int_{\Omega}\abs{\mathbf{u}}^2\,dx\,dy,\quad E( \mu, \mathbf{u}):=\frac{ \mu E_1(\mathbf{u})-\left(\alpha-\mu a^{-1}\right)E_2(\mathbf{u})}{
E_3(\mathbf{u})}.
\end{aligned}
\end{align}
Then we consider the following variational problem
\begin{align}\label{variation-one}
-\lambda_1( \mu)=\inf_{\mathbf{u}\in X_1(\Omega)}E( \mu, \mathbf{u}).
\end{align}

\begin{lemma}\label{first-eigen}
There exists $\mathbf{u}_1\in C^{\infty}(\Omega)\cap X_2(\Omega)$ such that 
\begin{align}\label{variation-two}
-\lambda_1( \mu)=E( \mu, \mathbf{u}_1).
\end{align}
Furthermore, $(\mathbf{u},\lambda)=(\mathbf{u}_1,\lambda_1)$ solves the eigenvalue problem \eqref{polar-l-eigen}.
\end{lemma}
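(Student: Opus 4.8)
The plan is to recognise the functional $E(\mu,\cdot)$ as the Rayleigh quotient of the self-adjoint operator $\mathcal{A}_{\mu}$ already analysed in \autoref{eigennumber}, and then to extract a minimizer by the direct method before upgrading its regularity. Comparing the definition of $E(\mu,\mathbf{u})$ with the bilinear form computed in the proof of \autoref{eigennumber}, one sees that the numerator $\mu E_1(\mathbf{u})-(\alpha-\mu a^{-1})E_2(\mathbf{u})$ is exactly $(\mathcal{A}_{\mu}\mathbf{u},\mathbf{u})$, so that
\[
E(\mu,\mathbf{u})=\frac{(\mathcal{A}_{\mu}\mathbf{u},\mathbf{u})}{\norm{\mathbf{u}}_{L^2}^2},\qquad \mathbf{u}\in X_1(\Omega)\setminus\{0\}.
\]
Thus $-\lambda_1(\mu)$ is the bottom of the spectrum of $\mathcal{A}_{\mu}$, and the existence of a minimizer is equivalent to the statement that this bottom is an attained eigenvalue. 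Since \autoref{eigennumber} shows that $\mathcal{A}_{\mu}+\beta I$ is an isomorphism $X_2\to X_0$ whose inverse is compact, the spectral theorem for compact self-adjoint operators already yields a discrete spectrum accumulating only at $+\infty$, and the eigenfunction of $(\mathcal{A}_{\mu}+\beta I)^{-1}$ associated with its largest eigenvalue is precisely the sought minimizer.

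To keep the argument self-contained I would nonetheless run the direct method. Take a minimizing sequence $\{\mathbf{u}_n\}\subset X_1(\Omega)$ normalised by $E_3(\mathbf{u}_n)=\norm{\mathbf{u}_n}_{L^2}^2=1$, so that $\mu E_1(\mathbf{u}_n)-(\alpha-\mu a^{-1})E_2(\mathbf{u}_n)\to-\lambda_1(\mu)$. The coercivity inequality established in \autoref{eigennumber}, namely $((\mathcal{A}_{\mu}+\beta I)\mathbf{u},\mathbf{u})\geq D\norm{\mathbf{u}}_{H^1}^2$ for $\beta$ large, immediately bounds $\{\mathbf{u}_n\}$ in $H^1(\Omega)$; this is the step that tames the indefinite-sign boundary term $-(\alpha-\mu a^{-1})E_2$ through the trace and interpolation inequalities. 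Passing to a subsequence, $\mathbf{u}_n\rightharpoonup\mathbf{u}_1$ weakly in $H^1$, strongly in $L^2(\Omega)$ by Rellich--Kondrachov, and with traces converging in $L^2(\partial B(0,a))$ and $L^2(\partial B(0,b))$ by compactness of the trace embedding. The constraint set $X_1(\Omega)$ is weakly closed, since the conditions $\nabla\cdot\mathbf{u}=0$, $\mathbf{u}\cdot\mathbf{n}|_{\abs{\mathbf{x}}=a,b}=0$ and the average condition \eqref{average} are all linear, so $\mathbf{u}_1\in X_1(\Omega)$ with $\norm{\mathbf{u}_1}_{L^2}=1$. Weak lower semicontinuity of the Dirichlet energy controls $E_1$ from below while $E_2,E_3$ pass to the limit by strong convergence, giving $E(\mu,\mathbf{u}_1)\leq-\lambda_1(\mu)$; the reverse inequality holds by definition of the infimum, so \eqref{variation-two} follows.

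It remains to derive the Euler--Lagrange equation and to bootstrap regularity. Computing the first variation of the quotient at $\mathbf{u}_1$ against an arbitrary $\mathbf{w}\in X_1(\Omega)$ yields the weak identity
\begin{align*}
\mu\int_{\Omega}\nabla\mathbf{u}_1:\nabla\mathbf{w}\,dx\,dy
&+\frac{\mu}{b}\int_{\partial B(0,b)}u_{1\mathbf{\tau}}w_{\mathbf{\tau}}\,ds
-\left(\alpha-\mu a^{-1}\right)\int_{\partial B(0,a)}u_{1\mathbf{\tau}}w_{\mathbf{\tau}}\,ds\\
&=-\lambda_1(\mu)\int_{\Omega}\mathbf{u}_1\cdot\mathbf{w}\,dx\,dy,
\end{align*}
valid for all divergence-free $\mathbf{w}$ tangent to the boundary. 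Because the test fields are constrained only by $\nabla\cdot\mathbf{w}=0$, a de Rham / Helmholtz argument produces a pressure $q\in H^1(\Omega)$ as the Lagrange multiplier, and restoring it recovers $\mu\Delta\mathbf{u}_1-\nabla q=\lambda_1\mathbf{u}_1$ in the distributional sense, i.e.\ the eigenvalue problem \eqref{polar-l-eigen}. Integrating by parts shows that the natural (stress-free and Navier-slip) conditions of \eqref{cond-3} are encoded in the surface integrals above, while the essential conditions are already built into $X_1$. Finally, since the right-hand side $\lambda_1\mathbf{u}_1\in L^2(\Omega)$, the generalised Stokes estimates of \autoref{Stokes0725} give $\mathbf{u}_1\in X_2(\Omega)=H^2$; feeding this back in and iterating, together with the smoothness of the circular boundaries, upgrades the solution to $C^{\infty}(\Omega)\cap X_2(\Omega)$. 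I expect the main obstacle to be exactly this regularity and boundary-condition bookkeeping: one must verify that the generalised Stokes estimates persist under the \emph{mixed} boundary conditions \eqref{cond-1}--\eqref{cond-2} with the indefinite coefficient $\alpha-\mu a^{-1}$, rather than under the standard no-slip or pure-slip settings, and that the Lagrange-multiplier pressure genuinely lies in $H^1(\Omega)$.
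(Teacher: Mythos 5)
Your proposal is correct and follows essentially the same route as the paper: minimize the Rayleigh quotient $E(\mu,\cdot)$ over $X_1(\Omega)$ by the direct method, derive the weak Euler--Lagrange identity whose surface terms encode the natural Navier-slip and stress-free conditions, recover the pressure as a Lagrange multiplier, and bootstrap regularity via the generalised Stokes estimates of \autoref{Stokes0725}. Your version is in fact more careful than the paper's at two points it glosses over — the coercivity bound from \autoref{eigennumber} that controls the indefinite boundary term along the minimizing sequence, and the use of test fields in $X_1(\Omega)$ rather than $C_0^\infty(\Omega)$ so that the boundary integrals actually survive the first variation — but these are refinements of the same argument, not a different one.
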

\begin{proof}
The existence of solution to the eigenvalue problem \eqref{polar-l-eigen} can be obtained from standard variational method.  By direct calculation, we obtain the following variational derivatives:
\begin{align}\label{func-energy-4}
\begin{aligned}
&DE_1(\mathbf{u}) \mathbf{w} = 2\int_{\Omega} \nabla \mathbf{u} : \nabla \mathbf{w} \,dx\,dy + \frac{2}{b}\int_{\partial B(0,b)} u_{\mathbf{\tau}} w_{\mathbf{\tau}} \,ds, \\
&DE_2(\mathbf{u}) \mathbf{w} = 2\int_{\partial B(0,a)} u_{\mathbf{\tau}} w_{\mathbf{\tau}} \,ds, \quad
DE_3(\mathbf{u}) \mathbf{w} = 2\int_{\Omega} \mathbf{u} \cdot \mathbf{w} \,dx\,dy,
\end{aligned}
\end{align}
for all $\mathbf{w} \in C_0^{\infty}(\Omega)$. At the minimum $\lambda_1(\mu)$ of the functional $E(\mu, \mathbf{u})$, we have
\[
\mu DE_1(\mathbf{u}_1) \mathbf{w} - \left(\alpha - \mu a^{-1}\right) DE_2(\mathbf{u}_1) \mathbf{w} = \lambda_1 DE_3(\mathbf{u}_1) \mathbf{w}, \quad \forall ~\mathbf{w} \in C_0^{\infty}(\Omega).
\]
From this, we see that $\mathbf{u}_1 \in X_1(\Omega)$ is a weak solution of the following equations:
\begin{align}\label{main-eq-4-linear-00}
\begin{cases}
\mu \Delta \mathbf{u}_1 - \nabla p_1 = \lambda_1 \mathbf{u}_1, \\
\nabla \cdot \mathbf{u}_1 = 0,
\end{cases}
\end{align}
subject to the boundary conditions \eqref{cond-2}. Based on the Stokes' estimates as in \autoref{Stokes0725} and the smoothness of $\partial \Omega$, we see that $\mathbf{u}_1 \in H^k(\Omega) \cap X_1(\Omega)$ for any positive integer $k$.
\end{proof}

 \subsection{Critical viscosity}
 
We define critical viscosity $\mu_c$ as the value of $\mu$
at which the first eigenvalue $\lambda_1(\mu)$ vanishes.
To get $\mu_c$, we consider the following zero eigenvalue problem
 \begin{align}\label{polar-l-eigen-a}
 \mathcal{A}_{\mu}\mathbf{v}=0,\quad  \mathbf{v}\in X_2(\Omega).
\end{align}
By direct calculation, we see that if $\mathbf{v}$ solves \eqref{polar-l-eigen-a}, one then gets
\[ \int_{\Omega}\abs{\nabla\mathbf{v}}^2\,dx\,dy+\frac{1}{b}\int_{\partial B\left(0,b\right)}|v_{\tau}|^2ds
=\left(\frac{\alpha}{\mu}-a^{-1}\right)\int_{\partial B(0,a)}\left(v_{\mathbf{\tau}}\right)^2 \,ds.
   \]
The above equation allows us to consider the following functional extremum
 \begin{align}
 \gamma=\inf_{\mathbf{u}\in X_1(\Omega)}
 \frac{E_1(\mathbf{u})}{E_2(\mathbf{u})}
 \end{align}
 by which we introduce a critical viscosity as follows
 \begin{align}\label{c-viscosity}
 \mu_c=\frac{a\alpha}{\gamma+a}.
 \end{align}
The value of $\mu_c$ given by \eqref{c-viscosity} is precisely the value of $\mu$ at which the first eigenvalue $\lambda_1(\mu)$ vanishes. This is described in the following lemma:

   \begin{lemma}\label{posi-eigen}
At $\mu=\mu_c$, the leading eigenvalue $\lambda_1=0$ 
and $\frac{d\lambda_1}{d\mu}\Big|_{\mu=\mu_c}<0$.
\end{lemma}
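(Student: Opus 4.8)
The plan is to argue entirely through the variational characterization of the first eigenvalue provided by \autoref{first-eigen}, namely
\[
-\lambda_1(\mu)=\inf_{\mathbf{u}\in X_1(\Omega)}E(\mu,\mathbf{u}),\qquad
E(\mu,\mathbf{u})=\frac{\mu\bigl(E_1(\mathbf{u})+a^{-1}E_2(\mathbf{u})\bigr)-\alpha E_2(\mathbf{u})}{E_3(\mathbf{u})},
\]
and to exploit the structural fact that, for each fixed $\mathbf{u}$, the map $\mu\mapsto E(\mu,\mathbf{u})$ is affine with strictly positive slope $\bigl(E_1(\mathbf{u})+a^{-1}E_2(\mathbf{u})\bigr)/E_3(\mathbf{u})$. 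Thus $-\lambda_1$ is an infimum of increasing affine functions, hence concave and nondecreasing in $\mu$, so that $\lambda_1$ is convex and nonincreasing. The two claims then reduce to locating the zero of $\lambda_1$ and showing the decrease is strict there.

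For the first claim I would rewrite, for any $\mathbf{u}$ with $E_2(\mathbf{u})>0$,
\[
E(\mu_c,\mathbf{u})=\frac{E_2(\mathbf{u})}{E_3(\mathbf{u})}\left[\mu_c\,\frac{E_1(\mathbf{u})}{E_2(\mathbf{u})}-\bigl(\alpha-\mu_c a^{-1}\bigr)\right].
\]
By the definition of $\gamma$ and the choice of $\mu_c$ in \eqref{c-viscosity}, which is precisely the value making $\mu_c\gamma=\alpha-\mu_c a^{-1}$, the bracket is nonnegative for every competitor (since $E_1/E_2\ge\gamma$ and $\mu_c>0$) and vanishes exactly when $E_1(\mathbf{u})/E_2(\mathbf{u})=\gamma$; the degenerate case $E_2(\mathbf{u})=0$ gives $E(\mu_c,\mathbf{u})=\mu_c E_1/E_3\ge 0$ trivially. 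Hence $E(\mu_c,\cdot)\ge 0$, so $-\lambda_1(\mu_c)\ge 0$. That the infimum equals $0$ follows by invoking a minimizer $\mathbf{u}^\ast\in X_1(\Omega)$ of the quotient $E_1/E_2$, whose existence is obtained by the same direct-method and compactness argument used in \autoref{first-eigen}; for this $\mathbf{u}^\ast$ the bracket vanishes and $E(\mu_c,\mathbf{u}^\ast)=0$, whence $-\lambda_1(\mu_c)=0$.

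For the second claim I would pass to the scale-invariant reformulation
\[
\lambda_1(\mu)=\sup_{\mathbf{u}\in X_1(\Omega),\,E_3(\mathbf{u})=1}\Bigl[\alpha E_2(\mathbf{u})-\mu\bigl(E_1(\mathbf{u})+a^{-1}E_2(\mathbf{u})\bigr)\Bigr],
\]
which exhibits $\lambda_1$ as a supremum of affine functions of $\mu$, each with slope $-(E_1+a^{-1}E_2)<0$. Let $\mathbf{u}_1$ be the eigenfunction at $\mu=\mu_c$ from \autoref{first-eigen}, normalized by $E_3(\mathbf{u}_1)=1$, and write $s:=E_1(\mathbf{u}_1)+a^{-1}E_2(\mathbf{u}_1)$. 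Using $\mathbf{u}_1$ as a fixed competitor and the equality $\alpha E_2(\mathbf{u}_1)=\mu_c s$ at $\mu_c$ gives $\lambda_1(\mu)\ge s(\mu_c-\mu)$, so for $\mu<\mu_c$ one finds the difference quotient $\bigl(\lambda_1(\mu)-\lambda_1(\mu_c)\bigr)/(\mu-\mu_c)\le -s$, hence $\lambda_1'(\mu_c^-)\le -s$. Since $\mathbf{u}_1\not\equiv\mathbf{0}$ forces $E_1(\mathbf{u}_1)>0$ (otherwise $\nabla\mathbf{u}_1=0$ and the impermeability, average, and outer boundary conditions force $\mathbf{u}_1=\mathbf{0}$), we get $s>0$ and the left derivative is strictly negative. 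An equivalent Feynman–Hellmann computation, differentiating $-\lambda_1(\mu)=\mu E_1(\mathbf{u}_1(\mu))-(\alpha-\mu a^{-1})E_2(\mathbf{u}_1(\mu))$ and discarding the $\mathbf{u}_1$-variation by first-order optimality, yields directly $\lambda_1'(\mu_c)=-s<0$.

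The delicate point, and the step I expect to be the main obstacle, is justifying differentiability of $\lambda_1$ at $\mu_c$, equivalently the legitimacy of discarding the eigenfunction-variation terms. This is genuinely subtle because the rotational symmetry of the annulus makes the critical eigenvalue degenerate: the later reduction to the planar system \eqref{req-m1-m} reflects a two-dimensional kernel spanned by the $e^{\pm i\theta}$ modes. I would resolve it by separation of variables, projecting onto a fixed angular mode $e^{in\theta}$, which reduces \eqref{polar-l-eigen} to a self-adjoint fourth-order ODE eigenvalue problem in $r$ (the same formulation carried by $\Psi_1$ later) whose leading eigenvalue is simple; within a fixed mode the eigenbranch depends analytically on $\mu$, Feynman–Hellmann applies rigorously, and $\lambda_1$ is recovered as the maximum over $n$, attained at the critical mode, so the per-mode derivative formula transfers. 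Alternatively, the purely convex-analytic estimate above already delivers $\lambda_1'(\mu_c^-)<0$ with no differentiability input, and combined with the monotonicity $\lambda_1'\le 0$ this suffices to conclude the asserted strict decrease.
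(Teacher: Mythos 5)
Your proposal is correct, and at its core it computes the same quantity as the paper: the derivative formula $\lambda_1'(\mu_c)=-\bigl(E_1(\mathbf{u}_c)+a^{-1}E_2(\mathbf{u}_c)\bigr)/E_3(\mathbf{u}_c)$ that you obtain via Feynman--Hellmann is exactly what the paper derives in \eqref{func-energy-7} by setting $\mu=\mu_c+\delta$, $\mathbf{u}_1=\mathbf{u}_c+\mathbf{u}_\delta$, pairing the eigenvalue equation with $\mathbf{u}_c$, and using self-adjointness to annihilate the $\mathbf{u}_\delta$ cross terms. The differences lie in the scaffolding. For $\lambda_1(\mu_c)=0$ the paper argues by contradiction from continuity of $\lambda_1(\mu)$ and the definition of $\mu_c$, whereas your direct sign analysis of $E(\mu_c,\cdot)$ via $E_1/E_2\ge\gamma$ plus a minimizer of that quotient is more self-contained; note only that your identity $\gamma=\alpha/\mu_c-a^{-1}$, which is the one actually forced by the paper's zero-eigenvalue energy identity, corresponds to $\mu_c=a\alpha/(a\gamma+1)$ rather than the expression in \eqref{c-viscosity} — a normalization inconsistency on the paper's side that does not affect your logic. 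For the derivative, the paper tacitly assumes the eigenpair depends differentiably on $\mu$ with $\mathbf{u}_\delta\to 0$; you correctly flag that this is nontrivial because the critical eigenvalue has multiplicity two, and you supply two remedies: the convex-analytic bound $\lambda_1'(\mu_c^-)\le -s<0$ (which, being an infimum of increasing affine functions, requires no differentiability input at all) and the reduction to a fixed angular mode, where the branch is simple and analytic and the rotational symmetry guarantees the two degenerate branches coincide for every $\mu$, so the per-mode derivative is genuinely the derivative of $\lambda_1$. Either remedy closes a gap the paper leaves open; what the paper's version buys in exchange is a shorter, purely computational derivation that produces the explicit quotient formula in one pass.
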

\begin{proof}
When the viscosity parameter is set to its critical value, namely, $\mu=\mu_c$, we can identify a specific vector field $\mathbf{u}_c\in X_2$ that successfully solves the eigenvalue problem \eqref{polar-l-eigen} with the eigenvalue $\lambda$ precisely equal to zero. This finding directly implies the existence of an eigenvalue, denoted as $\lambda(\mu)$, which satisfies the condition $\lambda(\mu_c)=0$. Our subsequent objective is to rigorously demonstrate that this particular eigenvalue $\lambda(\mu)$ must indeed correspond to the first eigenvalue, $\lambda_1(\mu)$. To proceed with a proof by contradiction, let us assume the contrary, that is, $\lambda_1(\mu_c)>\lambda(\mu_c)=0$. Given the continuous dependence of $\lambda_1(\mu)$ on the parameter $\mu$, we can infer that there must exist a value $\mu=\mu_0>\mu_c$ for which $\lambda_1(\mu_0)=0$. However, this conclusion is in direct conflict with the established definition of the critical value $\mu_c$. Finally, we aim to prove that the derivative of the first eigenvalue concerning the viscosity parameter, evaluated at the critical value, is negative.

Let us denote the fisrt eigenvector $\mathbf{u}_1$ at critical value $\mu=\mu_c$
as $\mathbf{u}_c$. By multiplying the equations of  $\mathbf{u}_1$ by $\mathbf{u}_c$ in $L^2$, and after performing integration by parts while utilizing the boundary conditions \eqref{cond-3}, we obtain the following result:
\begin{align}\label{func-energy-5}
\begin{aligned}
 &\mu \int_{\Omega} \nabla \mathbf{u}_1:
 \nabla \mathbf{u}_c
 \,dx\,dy
  +\frac{\mu}{b} \int_{\partial B(0,b)}u_{1,\mathbf{\tau}}
u_{c,\mathbf{\tau}}
\,ds
\\& -\left(\alpha-a^{-1}\mu\right)
 \int_{\partial B(0,a)}u_{1,\mathbf{\tau}}
u_{c,\mathbf{\tau}}
\,ds=-\lambda_1(\mu)
 \int_{\Omega}  \mathbf{u}_1\cdot
 \mathbf{u}_c
 \,dx\,dy.
 \end{aligned}
\end{align}
At the critical value $\mu=\mu_c$, we also have 
\begin{align}\label{func-energy-6}
\begin{aligned}
 &\mu_c\int_{\Omega} \nabla \mathbf{u}_c:
 \nabla \mathbf{u}_c
 \,dx\,dy
 +\frac{\mu_c}{b} \int_{\partial B(0,b)}u_{c,\mathbf{\tau}}
u_{c,\mathbf{\tau}}
\,ds
 \\&-\left(\alpha-a^{-1}\mu_c\right)
 \int_{\partial B(0,a)}u_{c,\mathbf{\tau}}
u_{c,\mathbf{\tau}}
\,ds=0.
 \end{aligned}
\end{align}
By setting  $\mu=\mu_c+\delta$, and $\mathbf{u}_1=\mathbf{u}_c+\mathbf{u}_{\delta}$ in \eqref{func-energy-5}, we obtain
\begin{align}\label{func-energy-7}
\begin{aligned}
 &\delta  \int_{\Omega} \nabla \mathbf{u}_c:
 \nabla \mathbf{u}_c
 \,dx\,dy
 +\frac{\delta }{b} \int_{\partial B(0,b)}u_{c,\mathbf{\tau}}
u_{c,\mathbf{\tau}}\,ds
 +
 a^{-1}\delta
 \int_{\partial B\left(0,a\right)}u_{c,\mathbf{\tau}}
u_{c,\mathbf{\tau}}
\,ds\\
& +\mu_c\int_{\Omega} \nabla \mathbf{u}_{\delta}:
 \nabla \mathbf{u}_c
 \,dx\,dy
 +\frac{\mu_c}{b} \int_{\partial B(0,b)}u_{\delta,\mathbf{\tau}}
u_{c,\mathbf{\tau}}\,ds
 \\&-\alpha 
  \int_{\partial B\left(0,a\right)}u_{\delta,\mathbf{\tau}}
u_{c,\mathbf{\tau}}\,ds
+a^{-1}\mu_c
 \int_{\partial B\left(0,a\right)}u_{\delta,\mathbf{\tau}}
u_{c,\mathbf{\tau}}ds
\\&+\delta 
\int_{\Omega} \nabla \mathbf{u}_{\delta}:
 \nabla \mathbf{u}_c
 \,dx\,dy
 +b^{-1}\delta
 \int_{\partial B(0,b)}u_{\delta,\mathbf{\tau}}
u_{c,\mathbf{\tau}}\,ds
\\& +
 a^{-1}\delta
 \int_{\partial B(0,a)}u_{\delta,\mathbf{\tau}}
u_{c,\mathbf{\tau}}\,ds
=-\lambda_1(\mu)
 \int_{\Omega}  \mathbf{u}_1\cdot
 \mathbf{u}_c
 \,dx\,dy,
 \end{aligned}
\end{align}
where \eqref{func-energy-6} is used.

From this, we deduce that:
\[
\frac{d\lambda_1}{d\mu}\Big|_{\mu=\mu_c}=-\frac{  \int_{\Omega} \nabla \mathbf{u}_c:
 \nabla \mathbf{u}_c
 \,dx\,dy
 + b^{-1}
 \int_{\partial B(0,b)}u_{c,\mathbf{\tau}}
u_{c,\mathbf{\tau}}
\,ds+
 a^{-1}
 \int_{\partial B(0,a)}u_{c,\mathbf{\tau}}
u_{c,\mathbf{\tau}}
\,ds}{ \int_{\Omega}  \mathbf{u}_1\cdot
 \mathbf{u}_c
 \,dx\,dy}<0.
\]
In this derivation, we have utilized the following relations:
\begin{align*}
\begin{aligned}
&\mu_c\int_{\Omega} \nabla \mathbf{u}_{\delta}:
 \nabla \mathbf{u}_c
 \,dx\,dy
 +b^{-1}\mu_c
 \int_{\partial B(0,b)}u_{\delta,\mathbf{\tau}}
u_{c,\mathbf{\tau}}\,ds
 \\&-\alpha 
  \int_{\partial B(0,a)}u_{\delta,\mathbf{\tau}}
u_{c,\mathbf{\tau}}\,ds
+a^{-1}\mu_c
 \int_{\partial B(0,a)}u_{\delta,\mathbf{\tau}}
u_{c,\mathbf{\tau}}\,ds=0.
 \end{aligned}
\end{align*}
and
\[
\lim_{\delta\to 0}\left(
\int_{\Omega} \nabla \mathbf{u}_{\delta}:
 \nabla \mathbf{u}_c
 \,dx\,dy
 +b^{-1}
 \int_{\partial B(0,b)}u_{\delta,\mathbf{\tau}}
u_{c,\mathbf{\tau}}\,ds
 +a^{-1}
 \int_{\partial B(0,a)}u_{\delta,\mathbf{\tau}}
u_{c,\mathbf{\tau}}\,ds\right)=0.
\]
\end{proof}
It is crucial to emphasize that \autoref{posi-eigen} confirms the validity of the principle of exchange of stability (PES) condition, a fundamental aspect in the analysis of bifurcations and stability transitions. Specifically, the PES condition is given by:
\begin{align}
\lambda_l(\mu)
\begin{cases}
<0,\quad \mu>\mu_c,\\
=0,\quad \mu=\mu_c,\\
>0,\quad \mu<\mu_c,
\end{cases}
\end{align}
where $\lambda_l(\mu)$ denotes the leading eigenvalue of \eqref{polar-l-eigen}. This condition indicates that as the viscosity $\mu$ crosses the critical threshold $\mu_c$, the stability of the steady-state solution $(\mathbf{u},p)=(\mathbf{0},p_0)$ changes. When $\mu$ is greater than $\mu_c$, it is linearly stable, as evidenced by the negative leading eigenvalue. Conversely, when $\mu$ is less than $\mu_c$, it becomes linearly unstable, with the leading eigenvalue becoming positive. At the critical value $\mu=\mu_c$, the leading eigenvalue is zero, marking the transition point between stability and instability. Moreover, it is noted that all other eigenvalues $\lambda$ remain negative. This PES condition thus provides a clear and precise criterion for identifying the onset of instability and the associated bifurcation behavior
at $(\mathbf{0},p_0, \mu_c)$. Based on this PES condition, we can first derive the following lemma:
   \begin{lemma}\label{corrollary}
When $\mu>\mu_c$, then the steady state $(\mathbf{u},p)=(\mathbf{0},p_0)$ is linearly stable while
the steady state $(\mathbf{u},p)=(\mathbf{0},p_0)$ becomes linearly unstable as $\mu<\mu_c$. 
\end{lemma}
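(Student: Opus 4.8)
The plan is to reduce the assertion to a statement about the sign of the leading eigenvalue $\lambda_1(\mu)$ of the linearized problem \eqref{polar-l-eigen}, and then to read off that sign from \autoref{posi-eigen}. The linearized evolution \eqref{main=p-l} may be written abstractly as $\frac{d\mathbf{v}}{dt}=-\mathcal{A}_{\mu}\mathbf{v}$. By \autoref{eigennumber} the operator $\mathcal{A}_{\mu}$ is self-adjoint on $X_0(\Omega)$ and has compact resolvent, so its spectrum is real, discrete, and accumulates only at $+\infty$; consequently $-\mathcal{A}_{\mu}$ possesses a largest eigenvalue, which by the variational formula \eqref{variation-one} is precisely $\lambda_1(\mu)$, with an associated eigenfunction $\mathbf{u}_1$ furnished by \autoref{first-eigen}. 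Linear stability of $(\mathbf{0},p_0)$ is then equivalent to $\lambda_1(\mu)<0$, and linear instability to $\lambda_1(\mu)>0$.

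For $\mu>\mu_c$ I would obtain decay by a direct energy estimate. Testing \eqref{main=p-l} against $\mathbf{v}$ and invoking the bilinear identity for $\mathcal{A}_{\mu}$ from \autoref{eigennumber} gives $\tfrac12\tfrac{d}{dt}\norm{\mathbf{v}}_{L^2}^2=-(\mathcal{A}_{\mu}\mathbf{v},\mathbf{v})$, while the variational characterization \eqref{variation-one} yields $(\mathcal{A}_{\mu}\mathbf{v},\mathbf{v})\ge -\lambda_1(\mu)\norm{\mathbf{v}}_{L^2}^2$. Hence $\tfrac{d}{dt}\norm{\mathbf{v}}_{L^2}^2\le 2\lambda_1(\mu)\norm{\mathbf{v}}_{L^2}^2$, and Gronwall's inequality produces $\norm{\mathbf{v}(t)}_{L^2}\le e^{\lambda_1(\mu)t}\norm{\mathbf{v}_0}_{L^2}$, i.e. exponential decay once $\lambda_1(\mu)<0$. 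For $\mu<\mu_c$ I would instead exhibit a growing mode: taking the eigenfunction $\mathbf{v}_0=\mathbf{u}_1$ from \autoref{first-eigen} gives the exact solution $\mathbf{v}(t)=e^{\lambda_1(\mu)t}\mathbf{u}_1$ of \eqref{main=p-l}, which grows without bound as soon as $\lambda_1(\mu)>0$, establishing linear instability.

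The one genuinely non-trivial point—the step I expect to be the crux—is to upgrade the local information of \autoref{posi-eigen} (namely $\lambda_1(\mu_c)=0$ and $\frac{d\lambda_1}{d\mu}\big|_{\mu=\mu_c}<0$) to the global sign statement $\lambda_1(\mu)<0$ for every $\mu>\mu_c$ and $\lambda_1(\mu)>0$ for every $\mu<\mu_c$ encoded in the PES condition. To this end I would exploit the explicit $\mu$-dependence of the Rayleigh quotient: for fixed $\mathbf{u}$,
\[
E(\mu,\mathbf{u})=\mu\,\frac{E_1(\mathbf{u})+a^{-1}E_2(\mathbf{u})}{E_3(\mathbf{u})}-\alpha\,\frac{E_2(\mathbf{u})}{E_3(\mathbf{u})}
\]
is affine and increasing in $\mu$, the slope being nonnegative and vanishing only for $\mathbf{u}\equiv\mathbf{0}$. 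Taking the infimum over $\mathbf{u}\in X_1(\Omega)$ shows that $-\lambda_1(\mu)$ is nondecreasing, so $\lambda_1(\mu)$ is nonincreasing in $\mu$. Since $\mu_c$ is by construction the value at which $\lambda_1$ vanishes—and the contradiction argument in the proof of \autoref{posi-eigen} rules out a second zero—monotonicity forces $\lambda_1(\mu)<0$ for $\mu>\mu_c$ and $\lambda_1(\mu)>0$ for $\mu<\mu_c$.

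Feeding these signs into the two estimates above completes the proof. The only additional care needed is to verify that no other eigenvalue can overtake $\lambda_1$ and spoil stability when $\mu>\mu_c$; this is immediate, because $\lambda_1$ is by definition the largest eigenvalue of the self-adjoint problem, so every remaining eigenvalue lies strictly below $\lambda_1(\mu)$ and is therefore strictly negative.
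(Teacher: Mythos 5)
Your proposal is correct and lands where the paper does: the lemma is read off from the sign of the leading eigenvalue $\lambda_1(\mu)$, with decay for $\lambda_1<0$ and a growing mode $e^{\lambda_1 t}\mathbf{u}_1$ for $\lambda_1>0$. The one place you genuinely diverge is in how you establish the global sign statement (the PES condition). The paper never argues this via monotonicity: it computes $\lambda_1(\mu_c)=0$ and $\frac{d\lambda_1}{d\mu}\big|_{\mu=\mu_c}<0$ in \autoref{posi-eigen}, rules out $\lambda_1(\mu_c)>0$ by continuity together with the definition of $\mu_c$ as the largest $\mu$ for which $\mathcal{A}_\mu$ has a nontrivial kernel, and then simply asserts the PES condition; \autoref{corrollary} is stated as an immediate consequence with no further proof. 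Your observation that $\mu\mapsto E(\mu,\mathbf{u})$ is affine and nondecreasing for each fixed $\mathbf{u}$, so that $\lambda_1(\mu)=-\inf_{\mathbf{u}}E(\mu,\mathbf{u})$ is nonincreasing, is a cleaner and more global route to the same conclusion, and it is consistent with everything in the paper. The only step worth making explicit is strictness on the unstable side: monotonicity alone gives $\lambda_1(\mu)\ge 0$ for $\mu<\mu_c$, and to upgrade this you should test $E(\mu,\cdot)$ with the critical minimizer $\mathbf{u}_c$, for which the slope $\bigl(E_1(\mathbf{u}_c)+a^{-1}E_2(\mathbf{u}_c)\bigr)/E_3(\mathbf{u}_c)$ is strictly positive, so $-\lambda_1(\mu)\le E(\mu,\mathbf{u}_c)<E(\mu_c,\mathbf{u}_c)=0$; this is a one-line addition, not a flaw in the approach.
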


 \subsection{Exact expression of $\mu_c$}
\autoref{posi-eigen} indicates that \(\mu_c\) is the value of \(\mu\) at which the first eigenvalue \(\lambda_1\) becomes critical. In other words, \(\mu_c\) is the largest value of \(\mu\) such that the abstract problem \eqref{polar-l-eigen-a} or the original problem \eqref{polar-l-eigen}, subject to the conditions \eqref{cond-3}--\eqref{average}, admits a nontrivial solution. It is important to note that the standard normal mode analysis cannot be directly applied to solve \eqref{polar-l-eigen} in the Cartesian coordinate system
    to derive the exact expression of $\mu_c$. 
   To make the standard normal mode analysis applicable, the geometric features of the domain
    $\Omega$ allow us to rewrite equation \eqref{polar-l-eigen} in the polar coordinate system.    
    In the polar coordinate system, the system \eqref{polar-l-eigen} with $\lambda=0$ reads
   \begin{align}\label{model0329}
  \begin{cases}
   0 =\mu(\Delta_{r} v_{r}-\frac{v_{r}}{r^2}-\frac{2}{r^2}\frac{\partial v_{\theta}}{\partial \theta})-
    \frac{\partial p}{\partial r}, 
    \\
0
    =\mu(\Delta_{r} v_{\theta}-\frac{v_{\theta}}{r^2}+\frac{2}{r^2}\frac{\partial v_{r}}{\partial \theta})-
    \frac{1}{r}\frac{\partial p}{\partial \theta},
    \\
    \frac{\partial (rv_{r})}{\partial r}+\frac{\partial v_{\theta}}{\partial \theta}=0,
  \end{cases}
\end{align}
where $\Delta_{n}$ is a derivative operator given by$
  \Delta_{r}=\frac{\partial^2}{\partial r^2}+\frac{1}{r}\frac{\partial}{\partial r}+\frac{1}{r^2}
  \frac{\partial^2}{\partial \theta^2}$.

It is crucial to adapt the boundary conditions and the domain accordingly. Specifically, the boundary conditions \eqref{cond-3}  and the average condition \eqref{average}, which are initially defined in Cartesian coordinates, need to be re-expressed in the new polar coordinate system. Through basic calculations, \eqref{cond-3}-\eqref{average} can be reformulated as follows:
\begin{align}\label{bianjie0329}
  \begin{aligned}
      &v_{r}|_{r=a,b}=
 \left(\partial_rv_{\theta}+\frac{v_{\theta}}{r}\right)\bigg|_{r=b}=0,
  ~\frac{\partial v_{\theta}}{\partial r}\bigg|_{r=a}
  =\left(\frac{1}{r}-\frac{\alpha}{\mu}\right) v_{\theta} \bigg|_{r=a},\quad
  \int_{0}^{2\pi}v_{\theta}\,d\theta=0.
    \end{aligned}
\end{align}

To eliminate the pressure terms and the divergence-free condition in \eqref{model0329}, we introduce a streamfunction \(\psi\) such that
\begin{align}
v_{r} = -\frac{1}{r}\frac{\partial \psi}{\partial \theta}, \quad
v_{\theta} = \frac{\partial \psi}{\partial r}.
\end{align}
This transformation allows us to reformulate the problem
 \eqref{model0329} into the following equivalent form:
\begin{align}\label{main=p}
\begin{cases}
0 = \mu \Delta_r^2 \psi, \\
\partial_\theta \psi = \partial_r^2 \psi - \left(\frac{1}{r} - \frac{\alpha}{\mu}\right) \partial_r \psi = 0, \quad r = a, \\
\partial_\theta \psi = \partial_r^2 \psi + \frac{\partial_r \psi}{r} = 0, \quad r = b, \\
\int_{0}^{2\pi} \psi \, d\theta = 0.
\end{cases}
\end{align}
Here, we have replaced the condition \(\int_{0}^{2\pi} \frac{\partial \psi}{\partial r} \, d\theta = 0\) with \(\int_{0}^{2\pi} \psi \, d\theta = 0\). Note that \(\int_{0}^{2\pi} \frac{\partial \psi}{\partial r} \, d\theta = 0\) implies that \(\int_{0}^{2\pi} \psi \, d\theta\) is a constant. Since \(\psi\) is defined up to a constant, we can choose \(\psi\) such that \(\int_{0}^{2\pi} \psi \, d\theta = 0\). This choice ensures that $\psi$ can not be a function only depending on $r$.
  
In polar coordinates \((r,\theta)\), the periodic property of \(\psi\) along the \(\theta\)-direction enables us to solve the zero eigenvalue problem \eqref{main-eq-4-linear-00} by employing the standard normal mode analysis. Specifically, we can express \(\psi\) in terms of a new unknown function as follows:
\begin{align}\label{normalmode}
\psi = \Psi(r) e^{in\theta}.
\end{align}

Substitute the preceding expression into \eqref{main=p},
by direct calculation, for each fixed $n\neq 0$, this leads to the following system of ODEs
   \begin{align}\label{polar-l-eigen-2}
  \begin{cases}
\Delta_{n}^2\Psi=0, \\
 \Psi= \Psi''-
\left(\frac{1}{r}-\frac{\alpha}{\mu}\right) 
\Psi'=0,\quad r=a,\\
 \Psi=\Psi''+\frac{\Psi'}{r}=0,\quad r=b,
  \end{cases}
\end{align}
where $
\Delta_{n}=\frac{d^2}{dr^2}+\frac{1}{r}\frac{d}{d r}-\frac{n^2}{r^2}$.
\eqref{polar-l-eigen-2} also has a variational structure,  which allows us to define
\begin{align}
\gamma_n=\inf_{
\substack{ \Phi \in H^2(a,b),~
\Phi(a)=0\\ \Phi(b)=0}}
\frac{
\int_a^br(\Delta_{n}\Phi)^2\,dr
}{\left(\Phi'(a)\right)^2}.
\end{align}
For the preceding variational problem, we obtain the folloiwng results:
\begin{lemma}\label{first-eigen}
There exists $\Psi\in C^{\infty}(a,b)\cap H^2(a,b)$ satisfying 
$\Psi(a)=\Psi(b)=0$
 such that 
\begin{align}\label{variation-two-2}
\gamma_n=\frac{\int_a^br(\Delta_{n}\Psi)^2\,dr
}{\left(\Psi'(a)\right)^2}=\inf_{
\substack{ \Phi \in H^2(a,b),~
\Phi(a)=0\\ \Phi(b)=0}}
\frac{
\int_a^br(\Delta_{n}\Phi)^2\,dr
}{\left(\Phi'(a)\right)^2}.
\end{align}
Furthermore, $\Psi$ solves the eigenvalue problem \eqref{polar-l-eigen-2}
 with 
 \begin{align}
 \mu= \mu_c^n=:\frac{a\alpha}{\gamma_n+2}.
 \end{align}
\end{lemma}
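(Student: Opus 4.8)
The plan is to run the direct method of the calculus of variations on the scale-invariant Rayleigh quotient $R_n(\Phi)=\int_a^b r(\Delta_n\Phi)^2\,dr/(\Phi'(a))^2$ and then to identify the Euler--Lagrange equation together with its natural boundary conditions. Since $R_n$ is invariant under $\Phi\mapsto c\Phi$, I would first reduce to minimizing $J(\Phi):=\int_a^b r(\Delta_n\Phi)^2\,dr$ over the admissible class $\mathcal{A}=\{\Phi\in H^2(a,b):\Phi(a)=\Phi(b)=0,\ \Phi'(a)=1\}$, which is nonempty; note that any admissible $\Phi$ with finite $R_n$ must have $\Phi'(a)\neq0$, since $\Delta_n\Phi=0$ with $\Phi(a)=\Phi(b)=0$ forces $\Phi\equiv0$ for $n\neq0$. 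The decisive preliminary estimate is coercivity: for $n\neq0$ the Dirichlet problem $\Delta_n\Phi=g,\ \Phi(a)=\Phi(b)=0$ is uniquely solvable, and testing against $r\Phi$ gives $\int_a^b\big(r|\Phi'|^2+\tfrac{n^2}{r}|\Phi|^2\big)\,dr=-\int_a^b rg\Phi\,dr$; combined with the Poincar\'e inequality (valid because $\Phi(a)=0$) and the pointwise identity $\Phi''=g-\tfrac1r\Phi'+\tfrac{n^2}{r^2}\Phi$, this yields $\|\Phi\|_{H^2(a,b)}\le C\|\Delta_n\Phi\|_{L^2(a,b)}$ with $C$ depending only on $a,b,n$.

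With coercivity in hand, I would take a minimizing sequence $\{\Phi_k\}\subset\mathcal{A}$; since $r\in[a,b]$ with $a>0$, boundedness of $J(\Phi_k)$ forces $\|\Phi_k\|_{H^2}$ bounded, so after passing to a subsequence $\Phi_k\rightharpoonup\Psi$ weakly in $H^2(a,b)$. The compact embedding $H^2(a,b)\hookrightarrow C^1([a,b])$ then gives $\Phi_k\to\Psi$ in $C^1$, so the constraints pass to the limit ($\Psi(a)=\Psi(b)=0$, $\Psi'(a)=1$), and weak lower semicontinuity of the convex functional $J$ gives $J(\Psi)\le\liminf J(\Phi_k)=\gamma_n$; hence $\Psi$ is a minimizer. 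The same $C^1$ convergence yields $\gamma_n>0$ (otherwise $\Delta_n\Phi_k\to0$ in $L^2$ would, by coercivity, force $\Phi_k\to0$ in $H^2$, contradicting $\Phi_k'(a)=1$), so the normalization is legitimate. Smoothness is immediate once the equation $\Delta_n^2\Psi=0$ is established, as on $[a,b]$ this is a linear fourth-order ODE with analytic coefficients (no singularity since $a>0$), whence $\Psi\in C^\infty(a,b)$.

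To extract the equation, I would compute the first variation: for $\eta\in H^2$ with $\eta(a)=\eta(b)=0$, stationarity of $R_n$ gives $\int_a^b r(\Delta_n\Psi)(\Delta_n\eta)\,dr=\gamma_n\,\Psi'(a)\eta'(a)$. I then integrate by parts twice via the Lagrange identity for the Sturm--Liouville operator $Lf=(rf')'-\tfrac{n^2}{r}f=r\Delta_nf$, namely $\int_a^b\big((\Delta_nF)G-F(\Delta_nG)\big)r\,dr=[r(F'G-FG')]_a^b$, applied with $F=\Delta_n\Psi$, $G=\eta$. Using $\eta(a)=\eta(b)=0$ reduces the identity to $\int_a^b r(\Delta_n^2\Psi)\eta\,dr+b(\Delta_n\Psi)(b)\eta'(b)-a(\Delta_n\Psi)(a)\eta'(a)=\gamma_n\Psi'(a)\eta'(a)$. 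Choosing $\eta\in C_0^\infty(a,b)$ forces $\Delta_n^2\Psi=0$; then letting $\eta'(a),\eta'(b)$ vary independently forces the natural conditions $(\Delta_n\Psi)(b)=0$ and $(\Delta_n\Psi)(a)=-\tfrac{\gamma_n}{a}\Psi'(a)$.

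Finally I would translate these into \eqref{polar-l-eigen-2}. Since $\Psi(a)=\Psi(b)=0$, one has $\Delta_n\Psi=\Psi''+\tfrac1r\Psi'$ at both endpoints; the condition $(\Delta_n\Psi)(b)=0$ is exactly $\Psi''(b)+\Psi'(b)/b=0$, and $(\Delta_n\Psi)(a)=-\tfrac{\gamma_n}{a}\Psi'(a)$ rearranges to $\Psi''(a)=-\tfrac{\gamma_n+1}{a}\Psi'(a)$, which agrees with $\Psi''(a)=(\tfrac1a-\tfrac{\alpha}{\mu})\Psi'(a)$ precisely when $\tfrac{\alpha}{\mu}=\tfrac{\gamma_n+2}{a}$, i.e. $\mu=\tfrac{a\alpha}{\gamma_n+2}=\mu_c^n$. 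The main obstacle is the coercivity estimate together with the continuity of the boundary functional $\Phi\mapsto\Phi'(a)$ under weak $H^2$-convergence: this is what simultaneously delivers existence of the minimizer, the strict positivity $\gamma_n>0$ legitimizing the normalization, and the passage of the denominator to the limit. Everything after that is the standard Sturm--Liouville integration by parts.
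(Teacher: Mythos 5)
Your proposal is correct and follows essentially the same route as the paper: the first-variation identity $\int_a^b r\,\Delta_n\Psi\,\Delta_n W\,dr=\gamma_n\,\Psi'(a)W'(a)$ (this is the paper's \eqref{WW}, since $a\alpha/\mu_c^n-2=\gamma_n$), compactly supported test functions to obtain $\Delta_n^2\Psi=0$, bootstrapping for smoothness, and then reading off the natural boundary conditions, which coincide with those of \eqref{polar-l-eigen-2} precisely when $\mu=a\alpha/(\gamma_n+2)$. The only substantive difference is that you additionally supply the existence step (the coercivity bound $\norm{\Phi}_{H^2}\leq C\norm{\Delta_n\Phi}_{L^2}$, the direct method, and the positivity of $\gamma_n$), which the paper leaves implicit under the heading of the standard variational method.
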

\begin{proof}
When \(\Psi = \Phi\), by performing integration by parts, we obtain
\begin{align}\label{WW}
\int_a^b r \Delta_{n} \Psi \Delta_{n} W \, dr = \left( \frac{a \alpha}{\mu_c^n} - 2 \right) \Psi'(a) W'(a), \quad \forall W \in H^{2}(a,b), \quad W(a) = W(b) = 0.
\end{align}
By selecting \(W\) to be compactly supported within \((a,b)\) in \eqref{WW}, we derive
\begin{align}\label{WW-0}
\int_a^b r \Delta_{n} \Psi \Delta_{n} W \, dr = 0, \quad \forall W \in C^{\infty}_0(a,b).
\end{align}
This implies that
\begin{align}\label{WW-1}
\mu_c \Delta_{n}^2 \Psi = 0
\end{align}
in a weak sense.
Utilizing standard bootstrapping arguments, we can demonstrate that \(\Psi\) is smooth. From \eqref{WW} and \eqref{WW-1}, we deduce that
   \begin{align}\label{polar-bd}
  \begin{cases}
 \Psi''-
\left(\frac{1}{r}-\frac{\alpha}{\mu}\right) 
\Psi'=0,\quad r=a,\\
\Psi''+\frac{\Psi'}{r}=0,\quad r=b,
  \end{cases}
\end{align}
Hence, $\Psi$ must solve the problem \eqref{polar-l-eigen-2}.
\end{proof}

The following proposition indicates that \(\gamma_{n+1} > \gamma_n\). 
\begin{lemma}\label{mingti0718}
    For the variational problem \eqref{variation-two-2}, $\gamma_{n+1} > \gamma_n$.
\end{lemma}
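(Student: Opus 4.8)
The plan is to exploit the monotone dependence of the numerator $\int_a^b r(\Delta_n\Phi)^2\,dr$ on the index $n$ for each fixed admissible $\Phi$, and then transfer this pointwise monotonicity to the infima by a test-function comparison. The guiding intuition is that raising $n$ makes the operator $\Delta_n=\frac{d^2}{dr^2}+\frac{1}{r}\frac{d}{dr}-\frac{n^2}{r^2}$ more singular, so its weighted $L^2$-size over admissible functions can only increase.

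First I would establish, for every $\Phi\in H^2(a,b)$ with $\Phi(a)=\Phi(b)=0$, the integration-by-parts identity
\[
\int_a^b r(\Delta_n\Phi)^2\,dr=\int_a^b r(\Delta_0\Phi)^2\,dr+2n^2\int_a^b\frac{(\Phi')^2}{r}\,dr+(n^4-4n^2)\int_a^b\frac{\Phi^2}{r^3}\,dr.
\]
To obtain it I would write $\Delta_n\Phi=\Delta_0\Phi-\frac{n^2}{r^2}\Phi$, expand the square, and integrate the cross term $-2n^2\int_a^b\frac{\Phi}{r}\big(\Phi''+\frac{\Phi'}{r}\big)\,dr$ by parts. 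The Dirichlet conditions $\Phi(a)=\Phi(b)=0$ annihilate every boundary contribution and convert this cross term into $2n^2\int_a^b\frac{(\Phi')^2}{r}\,dr-4n^2\int_a^b\frac{\Phi^2}{r^3}\,dr$; density of $C^\infty([a,b])$ in the admissible class justifies the manipulation.

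Next I would subtract the identity at level $n$ from the one at level $n+1$, which yields
\[
\int_a^b r(\Delta_{n+1}\Phi)^2\,dr-\int_a^b r(\Delta_n\Phi)^2\,dr=2(2n+1)\int_a^b\frac{(\Phi')^2}{r}\,dr+(2n+1)(2n^2+2n-3)\int_a^b\frac{\Phi^2}{r^3}\,dr,
\]
after using $(n+1)^4-4(n+1)^2-(n^4-4n^2)=(2n+1)(2n^2+2n-3)$. For $n\geq 1$ both coefficients $2(2n+1)$ and $(2n+1)(2n^2+2n-3)$ are strictly positive, the decisive arithmetic fact being $2n^2+2n-3\geq 1$. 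Since any competitor for the infimum must have $\Phi'(a)\neq 0$ and hence $\Phi\not\equiv 0$, the term $\int_a^b\frac{(\Phi')^2}{r}\,dr$ is strictly positive, so $\int_a^b r(\Delta_{n+1}\Phi)^2\,dr>\int_a^b r(\Delta_n\Phi)^2\,dr$ for every admissible $\Phi$.

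Finally, letting $\Psi_{n+1}$ denote the minimizer realizing $\gamma_{n+1}$ (whose existence, together with $\Psi_{n+1}'(a)\neq 0$, follows from the preceding lemma, since a finite value of the Rayleigh quotient forces $\Psi_{n+1}'(a)\neq 0$), I would use it as a test function for $\gamma_n$ and chain the strict pointwise inequality:
\[
\gamma_n\leq\frac{\int_a^b r(\Delta_n\Psi_{n+1})^2\,dr}{(\Psi_{n+1}'(a))^2}<\frac{\int_a^b r(\Delta_{n+1}\Psi_{n+1})^2\,dr}{(\Psi_{n+1}'(a))^2}=\gamma_{n+1}.
\]
The main obstacle I anticipate is concentrated in the first step: performing the integration by parts so that all boundary terms cancel cleanly, and verifying that the coefficient $2n^2+2n-3$ stays positive exactly on the relevant range $n\geq 1$. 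Its failure at $n=0$ is harmless and in fact expected, since the $n=0$ mode is excluded by the average condition $\int_0^{2\pi}\psi\,d\theta=0$.
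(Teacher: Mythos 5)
Your proposal is correct and follows essentially the same route as the paper: both use the minimizer $\Psi_{n+1}$ as a test function in the Rayleigh quotient for $\gamma_n$ and show, via integration by parts with the Dirichlet conditions killing the boundary terms, that the quadratic form $\int_a^b r(\Delta_n\Phi)^2\,dr$ is strictly increasing in $n$ for $n\geq 1$. Your exact identity with coefficients $2(2n+1)$ and $(2n+1)(2n^2+2n-3)$ is just a more explicit form of the paper's lower bound $(2n+1)\int_a^b \frac{2}{r}\bigl(\Psi_{n+1}'-\Psi_{n+1}/r\bigr)^2\,dr$ for the same difference.
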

\begin{proof}
   From the \autoref{first-eigen}, we can conclude that there exist $\Psi_{n}$ and $\Psi_{n+1}\in C^{\infty}\left(a,b\right)\cap H^{2}\left(a,b\right)$ such that 
   \begin{align*}
       \begin{aligned}
       \gamma_n=\frac{\int_a^br(\Delta_{n}\Psi_{n})^2\,dr
}{\left(\Psi_{n}'(a)\right)^2},~
\gamma_{n+1}=\frac{\int_a^br(\Delta_{n+1}\Psi_{n+1})^2\,dr
}{\left(\Psi_{n+1}'(a)\right)^2}.
       \end{aligned}
   \end{align*}
Then, by the definition of $\gamma_{n}$, we have 
\begin{align*}
    \begin{aligned}
        \gamma_{n}\leq\frac{\int_a^br(\Delta_{n}\Psi_{n+1})^2\,dr
}{\left(\Psi_{n+1}'(a)\right)^2},
    \end{aligned}
\end{align*}
which yields that 
\begin{align*}
    \begin{aligned}
        \gamma_{n}-\gamma_{n+1}
        &\leq 
        \frac{\int_a^b r\left[(\Delta_{n}\Psi_{n+1})^2-(\Delta_{n+1}\Psi_{n+1})^2\right]\,dr
}{\left(\Psi_{n+1}'(a)\right)^2}
\\&<-(2n+1)\int_{a}^{b}\frac{2}{r}
\left(\frac{d\Psi_{n+1}}{dr}-\frac{\Psi_{n+1}}{r}\right)^2dr<0.
    \end{aligned}
\end{align*}
\end{proof}
Therefore, we can conclude that \(\gamma\), as defined by \eqref{variation-one}, is equal to \(\gamma_1\). Consequently, \(\mu_c\) is the value of \(\mu\) such that the following system has a nontrivial solution:
\begin{align}\label{polar-l-eigen-3}
\begin{cases}
\Delta_{1}^2 \Psi = 0, \\
\Psi = \Psi'' - \left(\frac{1}{r} - \frac{\alpha}{\mu_c}\right) \Psi' = 0, \quad r = a, \\
\Psi = \Psi'' + \frac{\Psi'}{r} = 0, \quad r = b.
\end{cases}
\end{align}

Note that \(\Delta_{1}^2 \Psi = 0\) is a fourth-order differential equation with non-constant coefficients. This characteristic implies that the standard Euler method cannot be directly applied to solve this fourth-order equation. To utilize the Euler method for solving this problem, an invertible transformation must be employed to convert the equation with non-constant coefficients into one with constant coefficients. Fortunately, employing the following transformation
\[
\Psi(r) := r^2 \Phi(\ln r), \quad s = \ln r,
\]
and performing some calculations, \(\Delta_{1}^2 \Psi = 0\) can be converted into the following system with constant coefficients:
\begin{align}\label{special}
\Phi^{(4)}(s) + 4\Phi^{(3)}(s) + 2\Phi''(s) - 4\Phi'(s) - 3\Phi(s) = 0,
\end{align}
whose characteristic polynomial reads
\[
y^4 + 4y^3 + 2y^2 - 4y - 3 = (y - 1)(y + 1)^2(y + 3) = 0.
\]
Hence, utilizing the standard Euler method, we obtain that \(e^{-s}\), \(se^{-s}\), \(e^{-3s}\), and \(e^s\) are four different special solutions of \eqref{special}. Correspondingly, the linear system \(\Delta_{1}^2 \Psi = 0\) has four different special solutions \(r^3\), \(r\), \(r \ln r\), and \(r^{-1}\). This means that the general solutions of \eqref{polar-l-eigen-3} are
\[
\Psi=a_1r+a_2r\ln r+\frac{a_3}{r}+a_4 r^3.
\]

Making use of the following four conditions on boundary 
\[
  \begin{cases}
 \Psi= \Psi''-
\left(\frac{1}{r}-\frac{\alpha}{\mu_c}\right) 
\Psi'=0,\quad r=a,\\
 \Psi=\Psi''+\frac{\Psi'}{r}=0,\quad r=b,
  \end{cases}
\]
we get that \eqref{polar-l-eigen-3} has nontrivial solutions if and only if
the three parameters $a, b$ and $\mu_c$ satisfy the following equation
\[
\begin{vmatrix}
a^2&a^2\ln a&1&a^4\\
b^2&b^2\ln b&1&b^4\\
b^2&b^2(2+\ln b)&1&9b^4\\
-a^2 \mu_c+\alpha a^3&
a^3 \alpha-a^2\mu_c \ln a+a^3\alpha \ln a
&3\mu_c
-a\alpha&
3a^5 \alpha+3 a^4\mu_c
\end{vmatrix}=0
\]
 from which one can get the explicit expression of viscosity threshold $\mu_c$ as follows
 \begin{align}\label{uuu}
 \mu_c= \frac{a \alpha \left(1+3\sigma^4-4\sigma^2-
 4 \sigma^4 \log (\sigma)\right)}
 {2 \left(\sigma^4-1-4 \sigma^4 \log (\sigma)\right)} ,\quad \sigma=b/a.
 \end{align}

\section{Proofs of main theorems}
 
 \subsection{Proof of \autoref{existence} }\label{section31}

The proof of local existence and uniqueness of strong solutions is standard and thus omitted here. To establish the global existence of strong solutions, it suffices to derive some global energy estimates. To this end, let \((\mathbf{v}, q)\) be a strong solution of the perturbed problem \eqref{main=p}. In the sequel, for simplicity, \(C\) will denote a generic positive constant, which may depend on $a, b, \mu$ and $\alpha$, and maby be different in different inequalities. 

To test \(\eqref{main=p1}_1\) by \(\mathbf{v}\), we integrate by parts over \(\Omega\) and use \autoref{lemma-grad-1-1}. This yields the following:
\begin{align}\label{ddddd}
\frac{1}{2}\frac{d}{dt}\int_{\Omega}\abs{\mathbf{v}}^2\,dx\,dy
+\mu\int_{\Omega}\abs{\nabla\mathbf{v}}^2\,dx\,dy
=-\frac{\mu}{b}\int_{\partial B(0,b)}\left(v_{\mathbf{\tau}}\right)^2 \,ds+
\left(\alpha-\mu a^{-1}\right)
\int_{\partial B(0,a)}\left(v_{\mathbf{\tau}}\right)^2 \,ds
\end{align}
by which and \autoref{epsilon-1}, we have
\begin{align}\label{l22}
\frac{1}{2}\frac{d}{dt}\int_{\Omega}\abs{\mathbf{v}}^2\,dx\,dy
+\frac{\mu}{2}\int_{\Omega}\abs{\nabla\mathbf{v}}^2\,dx\,dy
\leq C\int_{\Omega}\abs{\mathbf{v}}^2\,dx\,dy.
\end{align}
Gronwall inequality implies that for any fixed $T>0$, 
\begin{align}\label{e-1}
\sup_{0\leq t\leq T}\int_{\Omega}\abs{\mathbf{v}}^2\,dx\,dy
+\int_0^T\left(
\int_{\Omega}\abs{\nabla\mathbf{v}(s)}^2\,dx\,dy\right)\,ds
\leq e^{CT}
\int_{\Omega}\abs{\mathbf{v}(0)}^2\,dx\,dy.
\end{align}

By differentiating the corresponding equations and the conditions \eqref{cond-1}-
\eqref{cond-2} with respect to $t$, we observe that $\frac{\partial \mathbf{v}}{\partial t}$ satisfies the following system 
   \begin{align}\label{ut-0718}
\begin{cases}
\mathbf{v}_{tt}+ (\mathbf{v}_t \cdot \nabla )\mathbf{v}
+ (\mathbf{v}\cdot \nabla )\mathbf{v}_t
= \mu \Delta \mathbf{v}_t-\nabla q_t
\\ \nabla \cdot  \mathbf{v}_t=0.
\end{cases}
\end{align}
The boundary conditions for the system \eqref{ut-0718} are as follows:
  \begin{align}\label{cond-1-11}
  &\mathbf{v}_t \cdot   \mathbf{n}|_{x^2+y^2=a^2}=0,\quad
  \mathbf{v}_t \cdot   \mathbf{n}|_{x^2+y^2=b^2}=\nabla \times   \mathbf{v}_t|_{x^2+y^2=b^2}=0,\\ \label{cond-2-1}
   &\left[\left(-q_t\mathbf{I} +\mu\left (\nabla \mathbf{v}_t+\left(\nabla \mathbf{v}_t\right)^{Tr} \right)\right)\cdot\mathbf{n}\right]
   \cdot   \mathbf{\tau}|_{x^2+y^2=a^2}
  =\alpha  \mathbf{v}_t  \cdot   \mathbf{\tau}|_{x^2+y^2=a^2},\quad \alpha \geq 0.
  \end{align}

Taking the $L^2$-inner product of \eqref{ut-0718} with $\mathbf{v}_t $ and using
  \autoref{lemma-grad-1-1}, we infer that
  \begin{align}\label{ut-proof-11}
\begin{aligned}
&\frac{d}{dt}\int_{\Omega}\frac{\abs{\mathbf{v}_t}^2}{2}\,dx\,dy
+\mu\int_{\Omega}\abs{\nabla \mathbf{v}_t}^2\,dx\,dy
\\&
+\frac{\mu}{b}\int_{\partial B(0,b)}\left(\tau \cdot \mathbf{v}_t\right)^2 \,ds
+\int_{\partial B(0,a)}\left(\mu a^{-1}-\alpha\right)\left(\tau \cdot \mathbf{v}_t\right)^2 \,ds
\\  &=- \int_{\Omega}
\mathbf{v}_t\cdot (\mathbf{v}_t\cdot\nabla )\mathbf{v}
 \,dx\,dy.
\end{aligned}
\end{align}
By H\"older's, Ladyzhenskaya's and Young's inequalities, one deduces that
    \begin{align}\label{ut-proof-33}
&\begin{aligned}
 \int_{\Omega}\abs{
\mathbf{v}_t\cdot (\mathbf{v}_t\cdot\nabla )\mathbf{v}}
 \,dx\,dy\leq &\norm{
 \nabla \mathbf{v}}_{L^2}
 \norm{ \mathbf{v}_t}^2_{L^4}\leq
C \norm{
 \nabla \mathbf{v}}_{L^2}
 \left(
 \norm{ \nabla  \mathbf{v}_t}^{\frac{1}{2}}_{L^2}
 \norm{\mathbf{v}_t}^{\frac{1}{2}}_{L^2}
 +\norm{\mathbf{v}_t}_{L^2}
 \right)^2\\
 \leq &\epsilon
 \norm{\nabla \mathbf{v}_t}_{L^2}^2+C
  \norm{
 \nabla \mathbf{v}}^2_{L^2}
  \norm{
\mathbf{v}_t}^2_{L^2},
\end{aligned}
\end{align}

By choosing $\epsilon$ sufficiently small and using \autoref{epsilon-1}, we derive the following conclusion:
  \begin{align}\label{ut-proof-111}
\begin{aligned}
&\frac{d}{dt}\int_{\Omega}\frac{\abs{\mathbf{v}_t}^2}{2}\,dx\,dy
+\frac{\mu}{2}\int_{\Omega}\abs{\nabla \mathbf{v}_t}^2\,dx\,dy\leq
C\left(
  \norm{
 \nabla \mathbf{v}}^2_{L^2}
 +1\right)
  \norm{
\mathbf{v}_t}^2_{L^2},
\end{aligned}
\end{align}
Gronwall inequality with \eqref{e-1} implies that for any fixed $T>0$, 
\begin{align}\label{e-2}
\sup_{0\leq t\leq T}\int_{\Omega}\abs{\mathbf{v}_t}^2\,dx\,dy
+\int_0^T\left(
\int_{\Omega}\abs{\nabla\mathbf{v}_t(s)}^2\,dx\,dy\right)\,ds
\leq C(T)\norm{\mathbf{v}_t(0)}_{L^2}^2.
\end{align}

To bound $\norm{\mathbf{v}_t(0)}^2_{L^2}$, testing $\eqref{main=p}_1$ by $\mathbf{v}_t$, we have
 \begin{align*}
\begin{aligned}
&\int_{\Omega}\abs{\mathbf{v}_t}^2\,dx\,dy
-\mu\int_{\Omega}\Delta \mathbf{v}\cdot  \mathbf{v}_t\,dx\,dy
+\int_{\Omega} \mathbf{v}_t\cdot( \mathbf{v}\cdot \nabla )\mathbf{v}\,dx\,dy
=0.
\end{aligned}
\end{align*}
From this, it follows that
  \begin{align*}
\begin{aligned}
\norm{\mathbf{v}_t}^2_{L^2}
\leq \mu \norm{\mathbf{v}_t}_{L^2}
\norm{\Delta \mathbf{v}}_{L^2}
+ \norm{\mathbf{v}_t}_{L^2}
\norm{( \mathbf{v}\cdot \nabla )\mathbf{v}}_{L^2},
\end{aligned}
\end{align*}
which gives
  \begin{align}\label{initial-0}
\begin{aligned}
\norm{\mathbf{v}_t(0)}_{L^2}^2\leq C\norm{\mathbf{v}_0}_{H^2}^2.
\end{aligned}
\end{align}

Testing \(\eqref{main=p1}_1\) by \(\mathbf{v}_t\), integrating by parts over \(\Omega\), we obtain
 \begin{align}\label{keyes0718}
\begin{aligned}
&\int_{\Omega}\abs{\mathbf{v}_t}^2\,dx\,dy
+\frac{\mu}{2}\frac{d}{dt}\int_{\Omega}\abs{\nabla \mathbf{v}}^2\,dx\,dy
\\&
=-\frac{\mu}{b}\int_{\partial B(0,b)}\left(\tau \cdot \mathbf{v}_t\right)
\left(\tau \cdot \mathbf{v}\right)
 \,ds
-\int_{\partial B(0,a)}\left(\mu a^{-1}-\alpha\right)\left(\tau \cdot \mathbf{v}_t\right)
\left(\tau \cdot \mathbf{v}\right) \,ds
\\  &
\quad-\int_{\Omega} \mathbf{v}_t\cdot( \mathbf{v}\cdot \nabla )\mathbf{v}\,dx\,dy
=I_1+I_2.
\end{aligned}
\end{align}
where
 \begin{align*}
\begin{aligned}
&I_1\leq C\left(
\norm{\mathbf{v}_t}_{L^2}^2+
\norm{\nabla \mathbf{v}_t}_{L^2}^2
+\norm{ \mathbf{v}}_{H^1}^2
\right),\\
&I_2\leq C\left(\norm{\mathbf{v}}_{L^2}^2
\norm{\nabla \mathbf{v}_t}_{L^2}^2
+\norm{\mathbf{v}}_{H^1}^2\right).
\end{aligned}
\end{align*}
We then deduce that
\begin{align}
\sup_{0\leq t\leq T}\int_{\Omega}\abs{\nabla \mathbf{v}}^2\,dx\,dy
+\int_0^T\left(
\int_{\Omega}\abs{\mathbf{v}_t(s)}^2\,dx\,dy\right)\,ds
\leq C(T)\norm{\mathbf{v}_{0}}_{H^2}^2.
\end{align}

Finally, we get by using Stokes estimates as in \autoref{Stokes0725}
\begin{align}\label{e-3}
\sup_{0\leq t\leq T}\left(\int_{\Omega}\abs{\nabla^2 \mathbf{v}}^2\,dx\,dy
+\int_{\Omega}\abs{\nabla q}^2\,dx\,dy
\right)
\leq C(T)\norm{\mathbf{v}_{0}}_{H^2}^2.
\end{align}
Combining \eqref{e-1}, \eqref{e-2} and \eqref{e-3}, it yields 
\begin{align}\label{energyone}
\begin{aligned}
\sup_{0\leq t\leq T}\left(
\norm{\mathbf{v}}_{H^2}^2
+\norm{\nabla q}_{L^2}^2
+\norm{\mathbf{v}_t}_{L^2}^2
\right)+\int_0^T\left(\norm{\nabla \mathbf{v}}_{L^2}^2
+\norm{ \mathbf{v}_s}_{H^1}^2
\right)\,ds
\leq C(T)\norm{\mathbf{v}_{0}}_{H^2}^2.
\end{aligned}
\end{align}
This nonlinear energy estimates show that
the existence of global strong solutions
to the system \eqref{main=p} subject to the mixed boundary conditions \eqref{cond-3}.

One can get the local existence and uniqueness of strong solutions to 
the problem \eqref{main=p} subject to boundary conditions \eqref{cond-3}.
Then the global existence and uniqueness of strong solutions can be obtained by using 
the a priori energy estimates \eqref{energyone}. We then only need to show 
\eqref{nonlinear0718} under the condition $\norm{\mathbf{v}}_{H^2}^2\leq \delta_0$ on the closed interval $[0,T]$.
For $I_1$ and $I_2$ in \eqref{keyes0718}, we have
 \begin{align*}
\begin{aligned}
&I_1=-\frac{\mu}{b}\int_{\partial B(0,b)}\left(\tau \cdot \mathbf{v}_t\right)
\left(\tau \cdot \mathbf{v}\right)
 \,ds
-\int_{\partial B(0,a)}\left(\mu a^{-1}-\alpha\right)\left(\tau \cdot \mathbf{v}_t\right)
\left(\tau \cdot \mathbf{v}\right) \,ds
\\
&\leq
\frac{\mu\epsilon}{2b}
\int_{\partial B(0,b)}\left(\tau \cdot \mathbf{v}_t\right)^2
 \,ds
 +\frac{\abs{\left(\mu a^{-1}-\alpha\right)}\epsilon}{2}
\int_{\partial B(0,a)}
 \left(\tau \cdot \mathbf{v}_t\right)^2
 \,ds\\
 &\quad+\frac{\mu}{2b\epsilon}
\int_{\partial B(0,b)}\left(\tau \cdot \mathbf{v}\right)^2
 \,ds
 +\frac{\abs{\left(\mu a^{-1}-\alpha\right)}}{2\epsilon}
\int_{\partial B(0,a)}
 \left(\tau \cdot \mathbf{v}\right)^2
 \,ds
\\
&\leq \frac{1}{4}\norm{\mathbf{v}_t}_{L^2}^2+\epsilon \norm{\nabla\mathbf{v}_t}_{L^2}^2
+C_\epsilon \norm{\mathbf{v}}_{H^1}^2
,\\
&I_2\leq C\int_{\Omega}
\abs{\mathbf{v}}\abs{\nabla\mathbf{v}}\abs{\mathbf{v}_t}
\,dx\,dy\leq C\norm{\mathbf{v}_t}_{L^2}
\norm{\mathbf{v}}_{L^4}
\norm{\nabla \mathbf{v}}_{L^4}
\leq \frac{1}{4}
\norm{\mathbf{v}_t}_{L^2}^2+C
\norm{\mathbf{v}}_{H^2}^2
\norm{\mathbf{v}}_{L^2}^2
\end{aligned}
\end{align*}
where we have used \autoref{epsilon-1}. We then get that
\begin{align}\label{keyes}
\begin{aligned}
&\frac{1}{2}\int_{\Omega}\abs{\mathbf{v}_t}^2\,dx\,dy
+\frac{\mu}{2}\frac{d}{dt}\int_{\Omega}\abs{\nabla \mathbf{v}}^2\,dx\,dy
\leq 
\epsilon \norm{\nabla\mathbf{v}_t}_{L^2}^2
+C_\epsilon \norm{\mathbf{v}}_{H^1}^2
+C
\norm{\mathbf{v}}_{H^2}^2
\norm{\mathbf{v}}_{L^2}^2.
\end{aligned}
\end{align}
Adding $A\times \eqref{keyes}$, $B\times\eqref{l22}$ and \eqref{ut-proof-111}, we have
\begin{align}\label{keyes-adding}
\begin{aligned}
&\frac{d}{dt}\left(\frac{A\mu}{2}
\norm{\nabla\mathbf{v}}_{L^2}^2
+
\frac{B}{2}
\norm{\mathbf{v}}_{L^2}^2
+\frac{1}{2}
\norm{\mathbf{v}_t}_{L^2}^2
\right)
+\frac{A}{2}\norm{\mathbf{v}_t}_{L^2}^2
+\frac{B\mu}{2}
\norm{\nabla\mathbf{v}}_{L^2}^2
+\frac{\mu}{2}
\norm{\nabla\mathbf{v}_t}_{L^2}^2
\\&\leq 
A\epsilon  \norm{\nabla\mathbf{v}_t}_{L^2}^2
+AC_\epsilon \norm{\mathbf{v}}_{H^1}^2
+AC
\norm{\mathbf{v}}_{H^2}^2
\norm{\mathbf{v}}_{L^2}^2
\\&\quad+BC  \norm{
\mathbf{v}}^2_{L^2}+C
  \norm{
 \nabla \mathbf{v}}^2_{L^2}
   \norm{
\mathbf{v}_t}^2_{L^2}
 +C
  \norm{
\mathbf{v}_t}^2_{L^2}.
\end{aligned}
\end{align}
Taking $A=4C, B=\frac{16CC_{\epsilon}}{\mu}$ and $\epsilon=\frac{\mu}{16C}$ in the preceidng inequality, we get
\begin{align}\label{keyes-adding-2}
\begin{aligned}
&\frac{d}{dt}\left(2C\mu
\norm{\nabla\mathbf{v}}_{L^2}^2
+
\frac{8CC_{\epsilon}}{\mu}
\norm{\mathbf{v}}_{L^2}^2
+\frac{1}{2}
\norm{\mathbf{v}_t}_{L^2}^2
\right)
+C\norm{\mathbf{v}_t}_{L^2}^2
+4CC_{\epsilon}
\norm{\nabla\mathbf{v}}_{L^2}^2
+\frac{\mu}{4}
\norm{\nabla\mathbf{v}_t}_{L^2}^2
\\&\leq 4CC_\epsilon \norm{\nabla\mathbf{v}}_{L^2}^2
+BC  \norm{
\mathbf{v}}^2_{L^2}
+C
\norm{\mathbf{v}}_{H^2}^2
\left(4C
\norm{\mathbf{v}}_{L^2}^2+
   \norm{
\mathbf{v}_t}^2_{L^2}\right)
\\&\leq 4CC_\epsilon \norm{\nabla\mathbf{v}}_{L^2}^2
+BC  \norm{
\mathbf{v}}^2_{L^2}
+C\delta_0
\left(4C
\norm{\mathbf{v}}_{L^2}^2+
   \norm{
\mathbf{v}_t}^2_{L^2}\right).
\end{aligned}
\end{align}
Hence, for suitably small $\delta_0\in (0,1]$, we have
\begin{align}\label{keyes-adding-3}
\begin{aligned}
&\frac{d}{dt}\left(2C\mu
\norm{\nabla\mathbf{v}}_{L^2}^2
+
\frac{8CC_{\epsilon}}{\mu}
\norm{\mathbf{v}}_{L^2}^2
+\frac{1}{2}
\norm{\mathbf{v}_t}_{L^2}^2
\right)
\\&+C_0\norm{\mathbf{v}_t}_{L^2}^2
+4CC_{\epsilon}
\norm{\nabla\mathbf{v}}_{L^2}^2
+\frac{\mu}{4}
\norm{\nabla\mathbf{v}_t}_{L^2}^2\leq C'\norm{\mathbf{v}}_{L^2}^2.
\end{aligned}
\end{align}
It then yields from \eqref{keyes-adding-3} and \eqref{initial-0} that
there exists  $C_0>0$ independent of $T$ such that
 \begin{align}\label{nonlinear-33}
\begin{aligned}
&\norm{\mathbf{v}}_{H^1}^2
+\norm{\mathbf{v}_t}_{L^2}^2
+\int_0^t\left(\norm{\nabla \mathbf{v}}_{L^2}^2
+
\norm{\mathbf{v}_t}_{H^1}^2
\right)\,ds
\\&\leq C_0\left(\norm{\mathbf{v}_{0}}_{H^2}^2
+
\int_0^t\norm{\mathbf{v}}_{L^2}^2\,ds
\right).
\end{aligned}
\end{align}
Under the condition $\norm{\mathbf{v}}_{H^2}^2\leq \delta_0$, using 
Stokes estimates we have
\[
\norm{\mathbf{v}}_{H^2}^2
+\norm{\nabla q}_{L^2}^2\leq C\left(
\norm{\mathbf{v}}_{H^1}^2+\norm{\mathbf{v}_t}_{L^2}^2\right)
\]
by which and \eqref{nonlinear-33}, we finally get the estimate \eqref{nonlinear0718}.

 \subsection{Proof of \autoref{non-stability}}\label{section32}

\begin{lemma}\label{lemma-grad-1-2}
For the strong solutions \(\mathbf{v}\) of the problem \eqref{main=p1} subject to the boundary conditions \eqref{cond-3} and the average condition \eqref{average} with initial data \(\mathbf{v}_0 \in H^{2}\left(\Omega\right)\), we have
\begin{align}\label{houmian0721}
\norm{\mathbf{v}}_{L^2}^2 \leq \norm{\mathbf{v}_0}_{L^2}^2 e^{2\lambda_1(\mu)t},
\end{align}
where $\lambda_1(\mu)$
is the first eigenvalue of the problem \eqref{polar-l-eigen}, defined by \eqref{variation-one}.
\end{lemma}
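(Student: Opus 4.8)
The plan is to recognize the right-hand side of the basic energy identity \eqref{ddddd} as $-E(\mu,\mathbf{v})\,\norm{\mathbf{v}}_{L^2}^2$, invoke the variational definition \eqref{variation-one} of $\lambda_1(\mu)$ to bound it from above, and then close with Gronwall's inequality.

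First I would recall the $L^2$ energy identity \eqref{ddddd}, valid for strong solutions of \eqref{main=p1} (the convective term drops after integration by parts, since $\mathbf{v}\cdot\mathbf{n}=0$ on both boundary components and $\int_{\partial\Omega}\tfrac12|\mathbf{v}|^2(\mathbf{v}\cdot\mathbf{n})\,ds=0$, and the pressure term vanishes by incompressibility). Using the definitions of $E_1$ and $E_2$, this identity can be rewritten as
\[
\frac12\frac{d}{dt}\norm{\mathbf{v}}_{L^2}^2
= -\Big(\mu E_1(\mathbf{v}) - (\alpha - \mu a^{-1})E_2(\mathbf{v})\Big)
= -E(\mu,\mathbf{v})\,\norm{\mathbf{v}}_{L^2}^2 .
\]

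The key step is to apply the sharp lower bound supplied by \eqref{variation-one}. For each fixed $t$, the solution $\mathbf{v}(t)$ is $H^1$ and divergence-free, satisfies the no-penetration conditions $\mathbf{v}\cdot\mathbf{n}|_{|\mathbf{x}|=a,b}=0$, and obeys the average condition \eqref{average}; hence $\mathbf{v}(t)\in X_1(\Omega)$ and is admissible in the Rayleigh quotient $E(\mu,\cdot)$. Consequently $E(\mu,\mathbf{v})\geq \inf_{\mathbf{u}\in X_1(\Omega)}E(\mu,\mathbf{u}) = -\lambda_1(\mu)$, and substituting into the identity above yields the differential inequality
\[
\frac{d}{dt}\norm{\mathbf{v}}_{L^2}^2 \leq 2\lambda_1(\mu)\,\norm{\mathbf{v}}_{L^2}^2 .
\]
Multiplying by the integrating factor $e^{-2\lambda_1(\mu)t}$ (equivalently, applying Gronwall's inequality) then gives exactly \eqref{houmian0721}.

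The only point requiring genuine care — the step I would treat most carefully — is the admissibility of $\mathbf{v}(t)$ in the variational problem for every $t$, namely that membership in $X_1(\Omega)$, and in particular the circulation/average constraint \eqref{average}, is propagated by the flow. This is precisely what licenses inserting the sharp constant $-\lambda_1(\mu)$ at each instant, rather than a lossier bound coming from the trace inequality; once this admissibility is secured, the rest of the argument is entirely routine.
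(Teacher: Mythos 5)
Your argument is correct and is essentially identical to the paper's proof: the authors likewise test $\eqref{main=p1}_1$ by $\mathbf{v}$, use \autoref{lemma-grad-1-1} to produce the energy identity, bound the right-hand side by $\lambda_1(\mu)\norm{\mathbf{v}}_{L^2}^2$ via the variational characterization \eqref{variation-one}, and conclude with Gronwall. Your explicit attention to the admissibility of $\mathbf{v}(t)$ in $X_1(\Omega)$ is a point the paper leaves implicit, but it does not change the route.
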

\begin{proof}
Testing \(\eqref{main=p1}_1\) by \(\mathbf{v}\), we integrate by parts over \(\Omega\) and use \autoref{lemma-grad-1-1} to get:
\begin{align}
\begin{aligned}
\frac{1}{2}\frac{d\norm{\mathbf{v}}_{L^2}^2}{dt}=&
 -\mu \left(
 \int_{\Omega}\abs{\nabla\mathbf{v}}^2\,dx\,dy+\frac{1}{b}\int_{\partial B(0,b)}\left(v_{\mathbf{\tau}}\right)^2 \,ds\right)
 \\& -\left(\frac{\alpha}{\mu}- a^{-1}\right)\int_{\partial B(0,a)}(v_{\tau})^2\,ds
 \leq \lambda_1(\mu)\norm{\mathbf{v}}_{L^2}^2
 \end{aligned}
\end{align}
from which we obtain 
\begin{align}
\norm{\mathbf{v}}_{L^2}^2\leq \norm{\mathbf{v}_0}_{L^2}^2e^{2\lambda_1(\mu)t}.
\end{align}
\end{proof}

\begin{lemma}
Suppose that $p\geq 2$. For the solution $\mathbf{v}$ of the problem \eqref{main=p1}
subject to the boundary conditions \eqref{cond-3} and
with initial data $\mathbf{v}_0 \in W^{1,p}\left(\Omega\right) $,
 then there exists a positive constant $C$ only depending on
 $a,b, p$ and $\alpha$, such that for any fixed $T>0$ and arbitrary $\epsilon>0$, we have
\begin{align}\label{lama}
\norm{v_{\tau}}_{L^{\infty}((0,T)\times \partial B(0,a))}\leq \epsilon C
\norm{(
-\partial_y,\partial_x)\cdot\mathbf{v}}_{L^{p}((0,T)\times \Omega)}
+ C_{\epsilon}\norm{\mathbf{v}}_{L^{\infty}((0,T);L^2(\Omega))}.
\end{align}
\end{lemma}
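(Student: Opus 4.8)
The plan is to bound the boundary trace $v_\tau|_{\partial B(0,a)}$ by routing through the scalar vorticity $\omega=\nabla^{\perp}\cdot\mathbf{v}$ together with a Sobolev trace embedding, and then to split the resulting $W^{1,p}$–control into a small multiple of $\norm{\omega}_{L^p}$ plus a lower–order $L^2$ term by interpolation. Since $\mathbf{v}$ is divergence–free with $\mathbf{v}\cdot\mathbf{n}=0$ on both $\partial B(0,a)$ and $\partial B(0,b)$ and the circulation is annihilated by the average condition \eqref{average}, the div–curl estimates of \autoref{lemma-grad} and \autoref{lemma-grad-1} (equivalently, elliptic regularity for the stream function $\psi$ with $\Delta\psi=\omega$) give, for a.e.\ fixed $t$,
\[
\norm{\mathbf{v}(t)}_{W^{1,p}(\Omega)}\le C\left(\norm{\omega(t)}_{L^p(\Omega)}+\norm{\mathbf{v}(t)}_{L^2(\Omega)}\right).
\]
Because $\abs{v_\tau}\le\abs{\mathbf{v}}$ and the unit tangent $\tau$ is smooth along the circle, it suffices to control $\norm{\mathbf{v}(t)}_{L^\infty(\partial B(0,a))}$.

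For $p>2$ the Morrey embedding $W^{1,p}(\Omega)\hookrightarrow C^0(\overline{\Omega})$ renders the trace on the one–dimensional curve $\partial B(0,a)$ continuous, and I would pair it with a Gagliardo–Nirenberg trace interpolation
\[
\norm{w}_{L^\infty(\partial B(0,a))}\le C\,\norm{w}_{W^{1,p}(\Omega)}^{\theta}\,\norm{w}_{L^2(\Omega)}^{1-\theta},\qquad \theta=\theta(p)\in(0,1).
\]
Substituting the div–curl bound and applying Young's inequality in the form $C\,X^{\theta}Y^{1-\theta}\le\epsilon X+C_\epsilon Y$ converts this multiplicative estimate into the additive splitting, yielding, for each fixed $t$,
\[
\norm{v_\tau(t)}_{L^\infty(\partial B(0,a))}\le \epsilon C\,\norm{\omega(t)}_{L^p(\Omega)}+C_\epsilon\,\norm{\mathbf{v}(t)}_{L^2(\Omega)},
\]
with the coefficient of $\norm{\omega(t)}_{L^p}$ proportional to $\epsilon$ and independent of $T$. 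The borderline case $p=2$ is the only point where the purely spatial embedding is lost at the endpoint, and it will be recovered using the extra regularity supplied by the evolution in the next step.

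The genuine difficulty is the reconciliation of the time–norms: taking the supremum in $t$ of the pointwise estimate produces only $\norm{\omega}_{L^\infty_t L^p_x}$, whereas \eqref{lama} asks for the strictly smaller space–time norm $\norm{\omega}_{L^p((0,T)\times\Omega)}$ paired with the $L^\infty$–in–time boundary norm on the left, and since $\norm{\omega}_{L^\infty_tL^p_x}$ can far exceed $\norm{\omega}_{L^p((0,T)\times\Omega)}$ the pointwise argument alone is insufficient. To close this gap I would invoke the parabolic character of \eqref{main=p1}: representing the boundary trace $v_\tau(t)$ through the time–history of $\omega$ (a Duhamel/heat–kernel representation near $\partial B(0,a)$, or equivalently a direct anisotropic parabolic boundary–trace estimate) so that a H\"older step in time replaces the instantaneous dependence by an integral governed by $\norm{\omega}_{L^p((0,T)\times\Omega)}$, while the data and low–frequency contribution are absorbed into $C_\epsilon\norm{\mathbf{v}}_{L^\infty((0,T);L^2(\Omega))}$; the parabolic smoothing simultaneously furnishes the fractional regularity needed to reach $L^\infty$ on the two–dimensional cylinder $(0,T)\times\partial B(0,a)$ down to $p=2$. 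This parabolic time–trace step is the main obstacle — the spatial estimates above being otherwise routine — and once it is in place, collecting the terms yields \eqref{lama}.
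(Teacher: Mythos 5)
Your spatial argument is the paper's argument: for each fixed $t$ one bounds $\norm{v_\tau(t)}_{L^\infty(\partial B(0,a))}\le\norm{\mathbf{v}(t)}_{L^\infty(\Omega)}$, applies the two--dimensional Gagliardo--Nirenberg inequality
$\norm{\mathbf{v}}_{L^\infty}\le C\norm{\nabla\mathbf{v}}_{L^p}^{p/(2(p-1))}\norm{\mathbf{v}}_{L^2}^{(p-2)/(2(p-1))}+C\norm{\mathbf{v}}_{L^2}$,
splits the product by Young, and replaces $\norm{\nabla\mathbf{v}}_{L^p}$ by $\norm{(-\partial_y,\partial_x)\cdot\mathbf{v}}_{L^p}$ via \autoref{lemma-grad} and \autoref{lemma-grad-1}. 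Up to that point you have reproduced the paper's proof.

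Where you diverge is in the time variable, and there your proposal has a genuine gap. You are right that taking the supremum in $t$ of the pointwise estimate produces $\norm{\omega}_{L^\infty((0,T);L^p(\Omega))}$ on the right rather than the space--time norm $\norm{\omega}_{L^p((0,T)\times\Omega)}$ displayed in \eqref{lama}; but the correct resolution is not the Duhamel/parabolic boundary--trace machinery you sketch. That step is never carried out --- as written it is an unproved claim, and it is not at all clear that an $L^\infty$-in-time boundary trace can be controlled by the $L^p$-in-time norm of the vorticity alone. In fact the paper's own proof terminates with $\epsilon\norm{\omega}_{L^\infty((0,T);L^p(\Omega))}+C_\epsilon\norm{\mathbf{v}}_{L^\infty((0,T);L^2(\Omega))}$: the subscript $L^{p}((0,T)\times\Omega)$ in the statement is evidently a misprint for $L^{\infty}((0,T);L^p(\Omega))$, and it is the latter form that is actually needed downstream, since the absorption step ``$2\epsilon C\abs{\Omega}^{1/p}\le\tfrac12$'' in the decay estimate for $\norm{\omega}_{L^p}$ requires $\sup_{t}\norm{\omega(t)}_{L^p}$ on the right (with the genuine $L^p_{t,x}$ norm the absorption constant would degenerate with $T$ and destroy the uniform decay). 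So the ``main obstacle'' you identify should be resolved by correcting the norm in the statement, not by adding parabolic smoothing. One point in your favour: you flag $p=2$ as a borderline case, and indeed the Gagliardo--Nirenberg exponents degenerate there to the false embedding $H^1(\Omega)\hookrightarrow L^\infty(\Omega)$; the paper's proof silently suffers the same defect at the endpoint $p=2$, though your proposed parabolic remedy for it is likewise not executed.
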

\begin{proof}
We only need to estimate $v_{\tau}=\tau\cdot \mathbf{v}$.
Using Gagliardo-Nirenberg interpolation and Young's inequality, one obtains
\begin{align}
\begin{aligned}
&\norm{v_{\tau}}_{L^{\infty}((0,T)\times \partial B(0,a))}
\leq
\norm{\mathbf{v}}_{L^{\infty}((0,T)\times \Omega)}
\\&\leq
C\norm{\nabla\mathbf{v}}^{\frac{p}{2(p-1)}}
_{L^{\infty}((0,T);L^p(\Omega))}
\norm{\mathbf{v}}^{\frac{p-2}{2(p-1)}}_{L^{\infty}((0,T);L^2(\Omega))}
+C\norm{\mathbf{v}}_{L^{\infty}((0,T);L^2(\Omega))}\\ &\leq
\epsilon
\norm{\nabla\mathbf{v}}_{L^{\infty}((0,T);L^p(\Omega))}
+C_{\epsilon}\norm{\mathbf{v}}_{L^{\infty}((0,T);L^2(\Omega))}\\&\leq
\epsilon
\norm{(
-\partial_y,\partial_x)\cdot\mathbf{v}}_{L^{\infty}((0,T);L^p(\Omega))}
+ C_{\epsilon}\norm{\mathbf{v}}_{L^{\infty}((0,T);L^2(\Omega))}.
\end{aligned}
\end{align}

\end{proof}

\subsubsection{Decay of $\norm{\mathbf{v}}_{W^{1,p}(\Omega)}$}

\begin{lemma}\label{lemma-grad-1-2}
For the strong solutions \(\mathbf{v}\) of the problem \eqref{main=p1} subject to the boundary conditions \eqref{cond-3} and the average condition \eqref{average} with initial data \(\mathbf{v}_0 \in H^{2}\left(\Omega\right)\), we have
\begin{align}\label{3333}
\norm{\mathbf{v}}_{W^{1,p}}\leq 2
e^{-\frac{ 2C_a^b(p-1)\mu}{p^2}t}
\norm{\omega_0}_{L^p}+4\abs{\Omega}^{\frac{1}{p}}D_{\epsilon}
e^{\lambda_1(\mu)t}\left\|\mathbf{v}_{0}\right\|_{L^2},
\end{align}
where $\omega_{0}=( -\partial_y,\partial_x)\cdot\mathbf{v}_{0}$.
\end{lemma}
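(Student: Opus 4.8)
The plan is to pass to the scalar vorticity formulation and reduce the $W^{1,p}$ bound to an $L^p$ bound on $\omega=(-\p_y,\p_x)\cdot\mathbf{v}$. Taking the two-dimensional curl of $\eqref{main=p1}_1$ eliminates the pressure and, there being no vortex stretching in the plane, yields the transport--diffusion equation $\p_t\omega+(\mathbf{v}\cdot\nabla)\omega=\mu\Delta\omega$. The stress-free part of \eqref{cond-3} forces $\omega|_{r=b}=0$, whereas the Navier-slip part produces the nonhomogeneous datum $\omega|_{r=a}=-\left(2a^{-1}+\alpha\mu^{-1}\right)v_{\tau}$. Since the div--curl (Biot--Savart type) estimate controls $\norm{\mathbf{v}}_{W^{1,p}}$ by $\norm{\omega}_{L^p}$ up to lower-order $L^2$ contributions already governed by \eqref{houmian0721}, everything reduces to propagating an $L^p$ bound on $\omega$ that cleanly separates a fast diffusive rate from the slow rate $\lambda_1(\mu)$.

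The \emph{main obstacle} is exactly the nonvanishing inner datum $\omega|_{r=a}$ (vorticity production at the boundary): the clean $L^p$ energy estimate --- multiply by $p|\omega|^{p-2}\omega$, integrate, and use $\nabla\cdot\mathbf{v}=0$ together with $\mathbf{v}\cdot\mathbf{n}=0$ to annihilate the advection --- leaves an uncontrolled boundary flux, so one cannot directly invoke a Dirichlet Poincar\'e inequality. To bypass this I would freeze the boundary datum and compare. Set $g(t):=\norm{\left(2a^{-1}+\alpha\mu^{-1}\right)v_{\tau}}_{L^\infty((0,t)\times\partial B(0,a))}$, a nondecreasing quantity, and introduce the modified vorticities $\omega^{\pm}$ solving the same transport--diffusion equation with the same initial data $\omega_0$, with $\omega^{\pm}|_{r=b}=0$ and the extremal data $\omega^{\pm}|_{r=a}=\pm g$. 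Because $\pm g(s)$ dominates $\pm\omega|_{r=a}(s)$ on the parabolic boundary, the maximum principle for the advection--diffusion operator sandwiches $\omega^{-}\le\omega\le\omega^{+}$, whence $\norm{\omega}_{L^p}\le\norm{\omega^{+}}_{L^p}+\norm{\omega^{-}}_{L^p}$.

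By linearity I would split $\omega^{\pm}=w^{\pm}+\Theta^{\pm}$, where $w^{\pm}$ carries the initial data $\omega_0$ with homogeneous Dirichlet data and $\Theta^{\pm}$ carries the boundary forcing $\pm g$ with zero initial data. For $w^{\pm}$ the boundary flux now vanishes, and the identity $\frac{d}{dt}\norm{w^{\pm}}_{L^p}^p=-\tfrac{4\mu(p-1)}{p}\int_{\Omega}\abs{\nabla\abs{w^{\pm}}^{p/2}}^2\,dx\,dy$ combined with the Dirichlet Poincar\'e inequality (constant $C_a^b$) delivers the fast decay $\norm{w^{\pm}}_{L^p}\le e^{-\frac{2C_a^b(p-1)\mu}{p^2}t}\norm{\omega_0}_{L^p}$, i.e.\ the first term (the prefactor $2$ coming from summing the two comparison solutions). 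For $\Theta^{\pm}$ the maximum principle bounds it by its extreme boundary value, so $\norm{\Theta^{\pm}}_{L^\infty}\le g(t)$ and hence $\norm{\Theta^{\pm}}_{L^p}\le\abs{\Omega}^{1/p}g(t)$, which supplies the $\abs{\Omega}^{1/p}$ factor in the second term.

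It remains to close the loop on $g(t)$. The trace--interpolation estimate \eqref{lama} bounds $g(t)$ by $\epsilon C\norm{\omega}_{L^p}+C_\epsilon\norm{\mathbf{v}}_{L^\infty((0,t);L^2)}$; taking $\epsilon$ small absorbs the $\epsilon\norm{\omega}_{L^p}$ feedback into the left-hand side, while \eqref{houmian0721} supplies the pointwise $L^2$ decay $\norm{\mathbf{v}(s)}_{L^2}\le e^{\lambda_1(\mu)s}\norm{\mathbf{v}_0}_{L^2}$. The delicate point is upgrading this into the advertised pointwise rate $e^{\lambda_1(\mu)t}$ on the boundary-forced part: I would represent $\Theta^{\pm}$ by Duhamel in time, pair the diffusive decay $e^{-c(t-s)}$ (same spectral gap $c=\tfrac{2C_a^b(p-1)\mu}{p^2}$) against the boundary amplitude of order $e^{\lambda_1(\mu)s}$, and evaluate $\int_0^t e^{-c(t-s)}e^{\lambda_1(\mu)s}\,ds$, which is dominated by $e^{\lambda_1(\mu)t}$ in the stable regime $\mu>\mu_c$. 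The real technical care lies here: making the comparison principle rigorous for the frozen (and merely $L^\infty$) boundary datum, extracting the two distinct decay rates simultaneously, and absorbing the feedback term while keeping every constant independent of $T$.
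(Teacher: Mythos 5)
Your proposal is correct and follows essentially the same route as the paper: reduce to the vorticity $\omega$ via \autoref{lemma-grad} and \autoref{lemma-grad-1}, sandwich $\omega$ between comparison solutions with frozen extremal boundary data by the maximum principle, obtain the rate $e^{-\frac{2C_a^b(p-1)\mu}{p^2}t}$ from the $L^p$ energy identity plus Poincar\'e for the homogeneous-Dirichlet part, bound the boundary-forced part by $\abs{\Omega}^{1/p}$ times the frozen datum, and close by absorbing the $\epsilon$-term of \eqref{lama} and inserting \eqref{houmian0721}. The only cosmetic differences are that the paper shifts $\omega^{\pm}$ by the constant $\lambda=\norm{\phi v_{\tau}}_{L^{\infty}((0,T)\times\partial B(0,a))}$ (prescribed on both boundary circles) rather than splitting off a boundary-forced piece $\Theta^{\pm}$ by superposition, and the paper obtains the final factor $e^{\lambda_1(\mu)t}$ by directly substituting \eqref{houmian0721} into \eqref{three-proof--16}, whereas your Duhamel treatment of that last step is, if anything, the more careful bookkeeping.
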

\begin{proof}
In view of \autoref{lemma-grad} and \autoref{lemma-grad-1},
to estimate $\left\|{\mathbf{v}}\right\|_{W^{1,p}}$, we only need to control
the $L^p$ norm of $\omega=\nabla^{\perp}\cdot\mathbf{v}=( -\partial_y,\partial_x)\cdot\mathbf{v}$.
After a straightforward calculation, it can be seen that $\omega$ solves
   \begin{align}\label{three-proof--3}
\begin{cases}
\frac{\partial \omega}{\partial t}+ (\mathbf{v} \cdot \nabla )\omega = \mu \Delta \omega ,\\
\omega=0,\quad x^2+y^2=b^2,\\
\omega=-\phi v_{\tau}. \quad x^2+y^2=a^2,\\
\omega|_{t=0}= \omega_0 ,
\end{cases}
\end{align}
where $\phi $ is given by
   \begin{align}\label{biaoji0722}
\phi:=2a^{-1}-\alpha \mu^{-1},
\end{align}
 and the second and third equations of \eqref{three-proof--3} are derived from \eqref{cond-1} and
\begin{align*}
\omega=\partial_xv_2-\partial_yv_1=
\frac{\partial v_{\tau}}{\partial \mathbf{n}}-\frac{v_{\tau}}{r}
+\frac{1}{r}\frac{\partial v_{\mathbf{n} }}{\partial \tau}
\end{align*}
with $\mathbf{n}$ and $\tau$ on $x^2+y^2=a^2$ being given by
\[
\mathbf{n}=-(\cos\theta,\sin\theta),\quad \tau
=(\sin\theta,-\cos\theta).
\]

The third equation of \eqref{three-proof--3} makes it difficult to directly estimate
$\norm{\omega}_{L^p}$. Fix an arbitrary $T > 0$, let us set $\lambda =\norm{\phi v_{\tau}}_{L^{\infty}((0,T)\times \partial B(0,a)}$, then
we consider the system:
   \begin{align*}
\begin{cases}
\frac{\partial \omega^{\pm}}{\partial t}+ (\mathbf{v} \cdot \nabla )\omega^{\pm} = \mu \Delta \omega^{\pm} ,\\
\omega^{\pm}=\pm\lambda,\quad x^2+y^2=b^2,\\
\omega^{\pm}=\pm\lambda. \quad x^2+y^2=a^2,\\
\omega^{\pm}|_{t=0}=\pm \abs{\omega_0}.
\end{cases}
\end{align*}
Let us denote $\eta^{\pm}=\omega-\omega^{\pm}$, then $\eta^{\pm}$ satisfies
   \begin{align*}
\begin{cases}
\frac{\partial \eta^{\pm}}{\partial t}= \mu \Delta  \eta^{\pm}-(\mathbf{v} \cdot \nabla ) \eta^{\pm},\quad (x.y)\in \Omega ,\\
\eta^{\pm}=\mp\lambda,\quad x^2+y^2=b^2,\\
\eta^{\pm}=\phi v_{\tau}
\mp\lambda. \quad x^2+y^2=a^2,\\
\eta^{\pm}|_{t=0}=\omega_0\mp \abs{\omega_0}.
\end{cases}
\end{align*}
Since the initial and boundary values of $\eta^{\pm}$
are sign definite, by the maximum principle, it gives
   \begin{align*}
\eta^{+}\leq 0,\quad \eta^{-}\geq 0
\end{align*}
which implies
   \begin{align*}
\omega^{-}\leq \omega \leq \omega^{+},\quad
\abs{\omega}\leq\max\{\abs{\omega^{+}},\abs{\omega^{-}}\}.
\end{align*}

Let us define $\sigma =\omega^{+}-\lambda $, then $\sigma$ solves
   \begin{align}\label{three-proof--8}
\begin{cases}
\frac{\partial \sigma}{\partial t}+ (\mathbf{v} \cdot \nabla )\sigma =
\mu \Delta\sigma,\\
\sigma=0,\quad x^2+y^2=b^2,\\
\sigma=0, \quad x^2+y^2=a^2,\\
\sigma|_{t=0}= \abs{\omega_0}-\lambda.
\end{cases}
\end{align}
Multiplying the first equation of \eqref{three-proof--8} by $\abs{\sigma}^{p-2}\sigma$ in $L^2$, one sees that
\begin{align}\label{three-proof--9}
\begin{aligned}
\frac{1}{p}\frac{d}{dt}\norm{\sigma}^p_{L^p}
=&-(p-1)\mu
\norm{\abs{\sigma}^{\frac{p-2}{2}}\abs{\nabla \sigma}}^2_{L^2}
\end{aligned}
\end{align}

 For $p=2$, we have
 \begin{align*}
\begin{aligned}
\frac{1}{2}\frac{d}{dt}\norm{\sigma}^2_{L^2}
\leq &-\frac{\mu}{2}C_a^b
\norm{\sigma}^2_{L^2}
\end{aligned}
\end{align*}
which gives
 \begin{align}\label{three-proof--9-2}
\begin{aligned}
\norm{\sigma}^2_{L^2}
\leq e^{-C_a^b\mu t}
\norm{\abs{\omega_0}-\lambda}^2_{L^2}.
\end{aligned}
\end{align}
Apply H\"{o}lder inequality and Poincar\'{e} inequality to \eqref{three-proof--9}, we have
\begin{align*}
\begin{aligned}
\frac{1}{p}\frac{d}{dt}\norm{\sigma}^p_{L^p}
+\frac{ 2C_a^b(p-1)\mu}{p^2}
\norm{\sigma}^p_{L^p}
\leq0,
\end{aligned}
\end{align*}
which, combined with Gronwall's inequality, results in
\begin{align}\label{three-proof--13}
\begin{aligned}
\norm{\sigma}_{L^p}=
\norm{\omega^{+}-\lambda}_{L^p}
\leq e^{-\frac{ 2C_a^b(p-1)\mu}{p^2}t}
\norm{\abs{\omega_0}-\lambda}_{L^p}.
\end{aligned}
\end{align}

Similarly, we have
\begin{align}\label{three-proof--14}
\begin{aligned}
\norm{\omega^{-}+\lambda}_{L^p}
\leq e^{-\frac{ 2C_a^b(p-1)\mu}{p^2}t}
\norm{-\abs{\omega_0}+\lambda}_{L^p}.
\end{aligned}
\end{align}
Thus, we infer from \eqref{three-proof--9-2},\eqref{three-proof--13}-\eqref{three-proof--14} and
$\norm{\omega}_{L^p}\leq \max\{
\norm{\omega^{-}+\lambda}_{L^p},
\norm{\omega^{+}-\lambda}_{L^p}
\}+\norm{\lambda}_{L^p}$ that
\begin{align}\label{three-proof--15}
\begin{aligned}
\norm{\omega}_{L^p}\leq
e^{-\frac{ 2C_a^b(p-1)\mu}{p^2}t}
\norm{\omega_0}_{L^p}+\abs{\Omega}^{\frac{1}{p}}\lambda
\left(e^{-\frac{ 2C_a^b(p-1)\mu}{p^2}t}
+1\right),\quad p\geq 2.
\end{aligned}
\end{align}

Taking $\epsilon$ in \eqref{lama} sufficient small, such that $2\epsilon C\abs{\Omega}^{\frac{1}{p}}\leq \frac{1}{2}$, one concludes
\begin{align}\label{three-proof--16}
\begin{aligned}
\norm{\omega}_{L^p}\leq 2
e^{-\frac{ 2C_a^b(p-1)\mu}{p^2}t}
\norm{\omega_0}_{L^p}+4\abs{\Omega}^{\frac{1}{p}}D_{\epsilon}
\norm{\mathbf{v}}_{L^2}.
\end{aligned}
\end{align}
Finally, \autoref{lemma-grad-1}, \eqref{houmian0721} and \eqref{three-proof--16} show that \eqref{3333} holds.
\end{proof}

\subsubsection{Decay of  $\norm{q}_{H^1(\Omega)}$}

\begin{lemma}\label{pres-es}
For the strong solutions \(\mathbf{v}\) of the problem \eqref{main=p1} subject to the conditions \eqref{cond-3}-\eqref{average} with initial data \(\mathbf{v}_0 \in H^{2}\left(\Omega\right)\),
suppose $q$ is average-free, we have
\begin{align}
\norm{q}_{H^1}^2\leq C\norm{ \mathbf{v}}^4_{W^{1,4}}\leq
 C\left(
2
e^{-\frac{ 3C_a^b\mu}{8}t}
\norm{\omega_0}_{L^4}+4\abs{\Omega}^{\frac{1}{4}}D_{\epsilon}
e^{\lambda_1(\mu)t}\left\|\mathbf{v}_{0}\right\|_{L^2}
\right)^4.
\end{align}
\end{lemma}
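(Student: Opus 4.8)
The plan is to characterise $q$ through a pressure Poisson problem, estimate it by a weak $H^1$ argument, and then read off the decay from the $W^{1,4}$ bound \eqref{3333}. First I would take the divergence of $\eqref{main=p1}_1$. Since $\nabla\cdot\mathbf{v}=0$ forces $\nabla\cdot\mathbf{v}_t=0$ and $\nabla\cdot\Delta\mathbf{v}=\Delta(\nabla\cdot\mathbf{v})=0$, every linear term drops out of the interior equation and one is left with
\[
-\Delta q=\nabla\cdot\big((\mathbf{v}\cdot\nabla)\mathbf{v}\big)=\nabla\mathbf{v}:\left(\nabla\mathbf{v}\right)^{Tr}\quad\text{in }\Omega .
\]
The right-hand side is quadratic in $\nabla\mathbf{v}$ and satisfies $\abs{\nabla\mathbf{v}:(\nabla\mathbf{v})^{Tr}}\le\abs{\nabla\mathbf{v}}^2$, so by H\"older's inequality it is bounded in $L^2$ by $C\norm{\nabla\mathbf{v}}_{L^4}^2\le C\norm{\mathbf{v}}_{W^{1,4}}^2$. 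The Neumann data is obtained by dotting $\eqref{main=p1}_1$ with $\mathbf{n}$ on $\partial\Omega$: the term $\mathbf{n}\cdot\mathbf{v}_t$ vanishes because $\mathbf{v}\cdot\mathbf{n}=0$ is preserved in time, the convective part yields the quadratic curvature term $-\kappa\,v_{\tau}^2$, and, writing $\Delta\mathbf{v}=\nabla^{\perp}\omega$, the viscous part becomes a tangential derivative of $\omega$ along the boundary.

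Next I would estimate $\norm{q}_{H^1}$ through the weak formulation. Testing $-\Delta q=\nabla\mathbf{v}:(\nabla\mathbf{v})^{Tr}$ against the average-free function $q$ and integrating by parts gives
\[
\norm{\nabla q}_{L^2}^2=\int_{\Omega}q\,\nabla\mathbf{v}:\left(\nabla\mathbf{v}\right)^{Tr}\,dx\,dy+\int_{\partial\Omega}q\,\left(\mathbf{n}\cdot\nabla q\right)\,ds ,
\]
and, $q$ being average-free, the Poincar\'e inequality upgrades $\norm{\nabla q}_{L^2}$ to $\norm{q}_{H^1}$. The interior integral is controlled by $\norm{q}_{L^2}\norm{\mathbf{v}}_{W^{1,4}}^2$. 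On the outer circle $\partial B(0,b)$ the stress-free condition forces $\omega=0$, so there $\mathbf{n}\cdot\nabla q$ reduces to the quadratic term $-\kappa\,v_{\tau}^2$, whose contribution is harmless.

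The crux, exactly as anticipated in the discussion of technical difficulties, is the inner boundary $\partial B(0,a)$, where the Navier-slip relation $\omega=-\left(2a^{-1}-\alpha\mu^{-1}\right)v_{\tau}$ makes $\mathbf{n}\cdot\nabla q$ carry a term proportional to $\tau\cdot\nabla v_{\tau}$, that is, linear in $\nabla\mathbf{v}$ and supported only on the boundary; a naive trace estimate is unavailable because $\norm{\nabla\mathbf{v}}_{L^2}$ sits on the wrong side of the energy identity. To resolve this I would use the identity $\tau\cdot\nabla v_{\tau}=\mathbf{n}\cdot\nabla^{\perp}(\tau\cdot\mathbf{v})$ together with $\int_{\partial B(0,a)}\tau\cdot\nabla\left(q\,v_{\tau}\right)\,ds=0$, and introduce a smooth cutoff $\Phi$ with $\Phi|_{\partial B(0,a)}=2a^{-1}+\alpha\mu^{-1}$ and $\Phi|_{\partial B(0,b)}=0$. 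Then the offending boundary integral equals
\[
\int_{\partial\Omega}\mathbf{n}\cdot\big(\Phi\,q\,\nabla^{\perp}(\tau\cdot\mathbf{v})\big)\,ds=\int_{\Omega}\nabla\cdot\big(\Phi\,q\,\nabla^{\perp}(\tau\cdot\mathbf{v})\big)\,dx\,dy=\int_{\Omega}\nabla(\Phi q)\cdot\nabla^{\perp}(\tau\cdot\mathbf{v})\,dx\,dy ,
\]
where $\nabla\cdot\nabla^{\perp}=0$ eliminates the remaining term. This volume integral is bounded by $C\norm{q}_{H^1}\norm{\mathbf{v}}_{H^1}$, so after Young's inequality the part in $\norm{q}_{H^1}$ is absorbed into $\tfrac12\norm{q}_{H^1}^2$ on the left.

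Collecting the interior and boundary contributions and applying Young's inequality then yields the first inequality $\norm{q}_{H^1}^2\le C\norm{\mathbf{v}}_{W^{1,4}}^4$ of the lemma. The second inequality is immediate from \eqref{3333}: specialising the $W^{1,p}$ decay estimate to $p=4$, for which $\tfrac{2C_a^b(p-1)\mu}{p^2}=\tfrac{3C_a^b\mu}{8}$ and $\abs{\Omega}^{1/p}=\abs{\Omega}^{1/4}$, and raising the resulting bound to the fourth power produces the stated right-hand side. I expect the inner-boundary term to be the sole genuine difficulty; the interior Poisson estimate, the outer-boundary term, and the final substitution are all routine applications of H\"older's, Poincar\'e's and Young's inequalities together with the elliptic regularity already invoked in \autoref{Stokes0725}.
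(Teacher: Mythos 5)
Your proposal is correct and follows essentially the same route as the paper: the same pressure Poisson problem with the Neumann data read off from the momentum equation, the same weak $H^1$ estimate obtained by testing with the average-free $q$, the same conversion of the troublesome inner-boundary term $\int_{\partial B(0,a)} q\,\tau\cdot\nabla v_{\tau}\,ds$ into a volume integral via the identity $\tau\cdot\nabla v_{\tau}=\mathbf{n}\cdot\nabla^{\perp}(\tau\cdot\mathbf{v})$ and the cutoff $\Phi$, and the same substitution of the $W^{1,4}$ decay estimate \eqref{3333} with $p=4$. The only cosmetic deviations (the sign convention in the divergence-theorem step and treating the quadratic boundary terms as ``harmless'' rather than spelling out the trace-theorem bound of \eqref{p-31}) do not affect the argument.
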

\begin{proof} It follows from \eqref{main=p1}  that the pressure $q$ solves the following equations:
  \begin{align}\label{213-3}
\begin{cases}
\Delta q=-\nabla \cdot \left((\mathbf{v} \cdot \nabla )
\mathbf{v}\right),
 \\
 \mathbf{n}\cdot \nabla q= \frac{\abs{\mathbf{v}}^2}{b}, \quad x^2+y^2=b^2,\\
  \mathbf{n}\cdot \nabla q=- \frac{\abs{\mathbf{v}}^2}{a}
  +\mu\phi 
  \tau \cdot \nabla v_{\tau},\quad x^2+y^2=a^2,
\end{cases}
\end{align}
where $\phi$ is \eqref{biaoji0722} and we have used
\begin{align*}
 &\mathbf{n}\cdot (\mathbf{v} \cdot \nabla )\mathbf{v} =
\frac{\abs{\mathbf{v}}^2}{a},\quad
 \mathbf{n}\cdot \Delta \mathbf{v}=
 \mathbf{n}\cdot \nabla^{\perp}\omega
 =- \tau \cdot \nabla \omega
 =\phi \tau \cdot \nabla \left(
v_{\tau}\right),\quad x^2+y^2=a^2,\\
 &\mathbf{n}\cdot (\mathbf{v} \cdot \nabla )\mathbf{v} =-
\frac{\abs{\mathbf{v}}^2}{b},\quad
 \mathbf{n}\cdot \Delta \mathbf{v}=
 \mathbf{n}\cdot \nabla^{\perp}\omega
 =- \tau \cdot \nabla \omega
 =0,\quad x^2+y^2=b^2.
\end{align*}

Taking the $L^2$-inner product of the first equations of \eqref{213-3} with $q$, we obtain
                 \begin{align}\label{p-1}
           \begin{aligned}
\int_{\Omega}q\Delta q\,dx\,dy&=
-\int_{\Omega}q\nabla \cdot \left((\mathbf{v} \cdot \nabla )
\mathbf{v}\right)\,dx\,dy.
   \end{aligned}
     \end{align}
By performing integration by parts on the left-hand side of \eqref{p-1} and utilizing the boundary conditions specified in \eqref{213-3}, we obtain
               \begin{align}\label{p-2}
           \begin{aligned}
\int_{\Omega}q\Delta q\,dx\,dy&=-
\int_{\Omega}\abs{\nabla q}^2\,dx\,dy
+\int_{\partial \Omega}q \mathbf{n}\cdot\nabla q\,ds\\
&=-\int_{\Omega}\abs{\nabla q}^2\,dx\,dy
 +\int_{\partial B(0,b)}q
 \frac{\abs{\mathbf{v}}^2}{b}\,ds \\&\quad
-\int_{\partial B(0,a)}q
 \frac{\abs{\mathbf{v}}^2}{a}\,ds
+\mu\phi \int_{\partial B(0,a)}q
   \tau \cdot\nabla v_{\tau}\,ds.
   \end{aligned}
     \end{align}
We then combine \eqref{p-1} with \eqref{p-2} to infer that
               \begin{align}\label{p-3}
               \begin{aligned}
\int_{\Omega}\abs{\nabla q}^2\,dx\,dy
&=\int_{\partial B(0,b)}q
 \frac{\abs{\mathbf{v}}^2}{b}\,ds-\int_{\partial B(0,a)}q
\frac{\abs{\mathbf{v}}^2}{a}\,ds\\&\quad+\mu
\phi \int_{\partial B(0,a)}q
   \tau \cdot\nabla v_{\tau}
\,ds
+\int_{\Omega}q\nabla \cdot \left((\mathbf{v} \cdot \nabla )
\mathbf{v}\right)\,dx\,dy\\&=J_0+J_1+J_2+J_3,
   \end{aligned}
     \end{align}
where $J_{0}$ and $J_1$ can be bounded as follows, using the trace theorem, H\"{o}lder's and Young's inequalities.
\begin{align}\label{p-31}
 \begin{aligned}
J_0+J_1=&\int_{\partial B(0,b)}q
 \frac{\abs{\mathbf{v}}^2}{b}\,ds-\int_{\partial B(0,a)}q
\frac{\abs{\mathbf{v}}^2}{a}\,ds\\&\leq
C\norm{q{\abs{\mathbf{v}}^2}}_{W^{1,1}}
\leq   C\norm{q}_{H^1}  \norm{\abs{\mathbf{v}}^2}_{H^1}
\\ & \leq C\norm{q}_{H^1}  \norm{\mathbf{v}}_{L^{4}} \norm{\mathbf{v}}_{W^{1,4}}
\leq \epsilon \norm{q}_{H^1}^2+
C_{\epsilon} \norm{\mathbf{v}}_{W^{1,4}}^4.
   \end{aligned}
     \end{align}

Regarding $J_2$, we carry out a straightforward computation to obtain
                     \begin{align*}
               \begin{aligned}
\mu\phi  \int_{\partial B(0,a)}q
   \tau \cdot\nabla v_{\tau}
\,ds&=\mu\phi \int_{\partial B(0,a)}
   \mathbf{n} \cdot q\nabla^{\perp}(\tau\cdot\mathbf{v}) \,ds
  \\& =-
\int_{\Omega}
    \nabla \cdot (q\Phi \nabla^{\perp}(\tau\cdot\mathbf{v}) )\,dx
   \end{aligned}
     \end{align*}
 which with $\nabla\cdot\nabla^{\perp}\left(\tau\cdot\mathbf{v}\right)=0$ implies
 \begin{align}\label{p-5}
\begin{aligned}
 J_2&\leq \mu\phi \abs{ \int_{\partial B(0,a)}q
   \tau \cdot\nabla
  v_{\tau}\,ds}
 \\& \leq C
  \norm{q}_{H^1}  \norm{\mathbf{v}}_{H^1}\leq  \epsilon   \norm{q}_{H^1}^2+C_{\epsilon}\norm{\mathbf{v}}_{H^1}^2,
\end{aligned}
 \end{align}
 where $\Phi \in H^{1}(\Omega)$, such that
\[
\begin{cases}
\Phi =\mathbf{0},\quad x^2+y^2=b^2,\\
\Phi =\phi,\quad x^2+y^2=a^2,\\
\norm{\Phi }_{W^{1,\infty}}\leq C\abs{\phi}.
\end{cases}
\]

 For $J_3$, we have the following estimates:
  \begin{align}\label{p-6}
 \begin{aligned}
  J_3&=-\int_{\Omega}q\nabla \cdot \left((\mathbf{v} \cdot \nabla )
\mathbf{v}\right)\,dx\,dy\leq
C\norm{q\abs{\nabla \mathbf{v}}^2}_{L^1}\\&\leq
C\norm{q}_{L^2}
\norm{\nabla \mathbf{v}}_{L^4}^2
\leq \epsilon \norm{q}_{H^1}^2+C_{\epsilon}
\norm{ \mathbf{v}}^4_{W^{1,4}}.
   \end{aligned}
 \end{align}
Since $q$ is only defined up to a constant, we choose it such that $q$ is average-free. Combining \eqref{p-31}, we ultimately establish the following inequality:
  \begin{align}\label{p-9}
 \begin{aligned}
\norm{q}_{H^1}^2\leq C_{\epsilon}\norm{ \mathbf{v}}^4_{W^{1,4}} <\infty ,
 \end{aligned}
 \end{align}
 where \autoref{lemma-grad-1-2} has been used.
\end{proof}

\subsubsection{Decay of $\norm{\mathbf{v}_t}_{L^{2}(\Omega)}$}

\begin{lemma}\label{utestimate}
For the strong solution \(\mathbf{v}\) of the problem \eqref{main=p1} subject to the conditions \eqref{cond-3}-\eqref{average} with initial data \(\mathbf{v}_0 \in H^{2}\left(\Omega\right)\),
there exists $\sigma_1>0$ such that
\begin{align}\label{tul2h1}
\begin{aligned}
\norm{\mathbf{v}_t}^2_{L^2}\leq &C\norm{\mathbf{v}_0}_{H^2}^2 e^{-\sigma_1 t}
\end{aligned}
\end{align}
\end{lemma}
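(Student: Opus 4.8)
The plan is to derive a differential inequality for $\norm{\mathbf{v}_t}_{L^2}^2$ from the time-differentiated system \eqref{ut-0718} and to close it using the strict negativity of the first eigenvalue when $\mu>\mu_c$. First I would start from the energy identity \eqref{ut-proof-11}, whose left-hand side, apart from the time derivative $\frac{d}{dt}\frac12\norm{\mathbf{v}_t}_{L^2}^2$, is precisely the quadratic form $Q_\mu(\mathbf{v}_t):=\mu E_1(\mathbf{v}_t)-(\alpha-\mu a^{-1})E_2(\mathbf{v}_t)$ evaluated at $\mathbf{v}_t$. The key structural remark is that $\mathbf{v}_t\in X_1(\Omega)$: it is divergence free, its normal traces vanish by \eqref{cond-1-11}, and differentiating the average condition \eqref{average} in time gives $\int_{\partial B(0,r)}\mathbf{v}_t\cdot\tau\,ds=0$. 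Consequently the variational characterization \eqref{variation-one} applies verbatim to $\mathbf{v}_t$, yielding $Q_\mu(\mathbf{v}_t)\ge -\lambda_1(\mu)\norm{\mathbf{v}_t}_{L^2}^2$ with $-\lambda_1(\mu)>0$ since $\mu>\mu_c$.

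The main difficulty is that this single coercivity bound consumes the entire dissipation $\mu\norm{\nabla\mathbf{v}_t}_{L^2}^2$, leaving nothing to absorb the term $\epsilon\norm{\nabla\mathbf{v}_t}_{L^2}^2$ produced by the nonlinearity in \eqref{ut-proof-33}. To circumvent this I would fix an auxiliary viscosity $\mu'\in(\mu_c,\mu)$ and split $Q_\mu$ accordingly; a direct computation gives $Q_\mu(\mathbf{v}_t)=Q_{\mu'}(\mathbf{v}_t)+(\mu-\mu')\bigl(E_1(\mathbf{v}_t)+a^{-1}E_2(\mathbf{v}_t)\bigr)$. Since $\mu'>\mu_c$ forces $\lambda_1(\mu')<0$, applying \eqref{variation-one} at viscosity $\mu'$ and retaining only $\mu-\mu'$ times the Dirichlet part yields the combined lower bound $Q_\mu(\mathbf{v}_t)\ge -\lambda_1(\mu')\norm{\mathbf{v}_t}_{L^2}^2+(\mu-\mu')\norm{\nabla\mathbf{v}_t}_{L^2}^2$, which now furnishes both a coercive $\norm{\mathbf{v}_t}_{L^2}^2$ term and a spare dissipation term.

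Inserting this bound into \eqref{ut-proof-11}, estimating the right-hand side by \eqref{ut-proof-33} and choosing $\epsilon<\mu-\mu'$ so that $\epsilon\norm{\nabla\mathbf{v}_t}_{L^2}^2$ is absorbed, I would obtain $\frac{d}{dt}\norm{\mathbf{v}_t}_{L^2}^2\le\bigl(2\lambda_1(\mu')+C\norm{\nabla\mathbf{v}}_{L^2}^2\bigr)\norm{\mathbf{v}_t}_{L^2}^2$ after discarding the remaining nonnegative dissipation. The proof then closes by Gronwall: testing \eqref{main=p1} by $\mathbf{v}$ and using the same $\mu'$-splitting shows $(\mu-\mu')\int_0^\infty\norm{\nabla\mathbf{v}(s)}_{L^2}^2\,ds\le\tfrac12\norm{\mathbf{v}_0}_{L^2}^2<\infty$, so the integrating factor $\exp\bigl(2\lambda_1(\mu')t+C\int_0^t\norm{\nabla\mathbf{v}}_{L^2}^2\,ds\bigr)$ is dominated by $C\,e^{2\lambda_1(\mu')t}$. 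Combined with $\norm{\mathbf{v}_t(0)}_{L^2}^2\le C\norm{\mathbf{v}_0}_{H^2}^2$ from \eqref{initial-0}, this gives \eqref{tul2h1} with $\sigma_1=-2\lambda_1(\mu')>0$. The decisive and only nonroutine step is the $\mu'$-splitting, which simultaneously extracts coercivity and leftover dissipation from the single variational bound; once the integrability of $\int_0^\infty\norm{\nabla\mathbf{v}}_{L^2}^2\,ds$ is secured, the remainder is a standard Gronwall argument.
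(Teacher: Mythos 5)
Your proposal is correct, and its skeleton matches the paper's: differentiate in time, test with $\mathbf{v}_t$, extract coercivity of the quadratic form $\mu E_1-(\alpha-\mu a^{-1})E_2$ from the fact that $\mu>\mu_c$, absorb the nonlinear term, and conclude by Gronwall together with $\norm{\mathbf{v}_t(0)}_{L^2}^2\leq C\norm{\mathbf{v}_0}_{H^2}^2$. Where you diverge is in the two technical steps that make this work. For the coercivity, the paper splits exactly at $\mu_c$: it uses the definition of $\mu_c$ (equivalently $Q_{\mu_c}\geq 0$) to obtain only the leftover dissipation $(\mu-\mu_c)\bigl(\norm{\nabla\mathbf{v}_t}_{L^2}^2+b^{-1}\int_{\partial B(0,b)}(\tau\cdot\mathbf{v}_t)^2\,ds\bigr)$, and then recovers an $L^2$-coercive term from this via the Poincar\'e-type inequality of \autoref{lemma-grad}. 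You instead split at an intermediate $\mu'\in(\mu_c,\mu)$ and invoke the variational characterization \eqref{variation-one} at $\mu'$ (legitimately, since $\mathbf{v}_t\in X_1(\Omega)$: it is divergence-free, has vanishing normal trace by \eqref{cond-1-11}, and inherits the average condition by differentiating \eqref{average} in time), which hands you the $-\lambda_1(\mu')\norm{\mathbf{v}_t}_{L^2}^2$ term and the spare dissipation simultaneously, with the explicit rate $\sigma_1=-2\lambda_1(\mu')$ and no appeal to the appendix lemma. For the factor $C\norm{\nabla\mathbf{v}}_{L^2}^2$ multiplying $\norm{\mathbf{v}_t}_{L^2}^2$, the paper absorbs it into the dissipation "for large enough time" using the exponential $H^1$-decay of \autoref{lemma-grad-1-2}, whereas you keep it inside the Gronwall exponent and bound it by the global integrability $(\mu-\mu')\int_0^{\infty}\norm{\nabla\mathbf{v}}_{L^2}^2\,ds\leq\tfrac12\norm{\mathbf{v}_0}_{L^2}^2$, obtained from the basic energy identity \eqref{ddddd} with the same splitting. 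Your variant is self-contained within the $L^2$ energy framework and makes the decay rate explicit; the paper's variant leans on the previously established $W^{1,p}$ decay, which it needs anyway for the pressure and $H^2$ estimates that follow.
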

\begin{proof} 
By differentiating the corresponding equations and the conditions \eqref{cond-3}-
\eqref{average} with respect to $t$, we observe that $\frac{\partial \mathbf{v}}{\partial t}$ satisfies the following system 
   \begin{align}\label{ut-1}
\begin{cases}
\mathbf{v}_{tt}+ (\mathbf{v}_t \cdot \nabla )\mathbf{v}
+ (\mathbf{v}\cdot \nabla )\mathbf{v}_t
= \mu \Delta \mathbf{v}_t-\nabla q_t,
\\ \nabla \cdot  \mathbf{v}_t=0.
\end{cases}
\end{align}
The boundary conditions for the system \eqref{ut-1} are as follows:
  \begin{align}\label{cond-1-1}
  \begin{aligned}
  &\mathbf{v}_t \cdot   \mathbf{n}|_{x^2+y^2=a^2}=0,\quad
  \mathbf{v}_t \cdot   \mathbf{n}|_{x^2+y^2=b^2}=\nabla \times   \mathbf{v}_t|_{x^2+y^2=b^2}=0,\\ 
   &\left[\left(-q_t\mathbf{I} +\mu\left (\nabla \mathbf{v}_t+\left(\nabla \mathbf{v}_t\right)^{Tr} \right)\right)\cdot\mathbf{n}\right]
   \cdot   \mathbf{\tau}|_{x^2+y^2=a^2}
  =\alpha  \mathbf{v}_t  \cdot   \mathbf{\tau}|_{x^2+y^2=a^2},\quad \alpha \geq 0.
  \end{aligned}
  \end{align}

Taking the $L^2$-inner product of \eqref{ut-1} with $\mathbf{v}_t $ and using
  \autoref{lemma-grad-1-1}, we infer that
  \begin{align}\label{ut-proof-1}
\begin{aligned}
&\frac{d}{dt}\int_{\Omega}\frac{\abs{\mathbf{v}_t}^2}{2}\,dx\,dy
+\mu\int_{\Omega}\abs{\nabla \mathbf{v}_t}^2\,dx\,dy
\\&
+\frac{\mu}{b}\int_{\partial B(0,b)}\left(\tau \cdot \mathbf{v}_t\right)^2 \,ds
-\left(\alpha-\mu a^{-1}\right)\int_{\partial B(0,a)}\left(\tau \cdot \mathbf{v}_t\right)^2 \,ds
\\  &\leq  \int_{\Omega}\abs{
\mathbf{v}_t\cdot (\mathbf{v}_t\cdot\nabla )\mathbf{v}}
 \,dx\,dy.
\end{aligned}
\end{align}
If $\left(\alpha-\mu a^{-1}\right)\leq0$, we have
  \begin{align}\label{ut-proof-1-new}
\begin{aligned}
&\frac{d}{dt}\int_{\Omega}\frac{\abs{\mathbf{v}_t}^2}{2}\,dx\,dy
+\mu\int_{\Omega}\abs{\nabla \mathbf{v}_t}^2\,dx\,dy
+\frac{\mu}{b}\int_{\partial B(0,b)}\left(\tau \cdot \mathbf{v}_t\right)^2 \,ds
\leq  \int_{\Omega}\abs{
\mathbf{v}_t\cdot (\mathbf{v}_t\cdot\nabla )\mathbf{v}}
 \,dx\,dy.
\end{aligned}
\end{align}
If $\left(\alpha-\mu a^{-1}\right)>0$, we have
    \begin{align}\label{ut-proof-30}
&\begin{aligned}
&-\left(\alpha-\mu a^{-1}\right)\int_{\partial B(0,a)}\left(\tau \cdot \mathbf{v}_t\right)^2 \,ds\\
&\geq
-\left(\alpha-\mu_c a^{-1}\right)\int_{\partial B(0,a)}\left(\tau \cdot \mathbf{v}_t\right)^2 \,ds
\\&\geq-\mu_c \int_{\Omega}\abs{\nabla \mathbf{v}_t}^2\,dx\,dy
-\frac{\mu_c}{b}\int_{\partial B(0,b)}\left(\tau \cdot \mathbf{v}_t\right)^2 \,ds
\end{aligned}
\end{align}
where the definition of $\mu_{c}$ is used. Then,
 from \eqref{ut-proof-1} we have
  \begin{align}\label{ut-proof-1-new2}
\begin{aligned}
&\frac{d}{dt}\int_{\Omega}\frac{\abs{\mathbf{v}_t}^2}{2}\,dx\,dy
+(\mu-\mu_c)\int_{\Omega}\abs{\nabla \mathbf{v}_t}^2\,dx\,dy
+\frac{\mu-\mu_c}{b}\int_{\partial B(0,b)}\left(\tau \cdot \mathbf{v}_t\right)^2 \,ds
\\&\leq  \int_{\Omega}\abs{
\mathbf{v}_t\cdot (\mathbf{v}_t\cdot\nabla )\mathbf{v}}
 \,dx\,dy.
\end{aligned}
\end{align}
Hence, for $\mu>\mu_c$, there exists $\sigma>0$ depending on $\mu$ such that 
  \begin{align}\label{ut-proof-1-new-3}
\begin{aligned}
&\frac{d}{dt}\int_{\Omega}\frac{\abs{\mathbf{v}_t}^2}{2}\,dx\,dy
+\sigma \int_{\Omega}\abs{\nabla \mathbf{v}_t}^2\,dx\,dy
+\sigma \int_{\partial B(0,b)}\left(\tau \cdot \mathbf{v}_t\right)^2 \,ds
\leq  \int_{\Omega}\abs{
\mathbf{v}_t\cdot (\mathbf{v}_t\cdot\nabla )\mathbf{v}}
 \,dx\,dy.
\end{aligned}
\end{align}

By H\"older's, Ladyzhenskaya's and Young's inequalities, one deduces that
    \begin{align}\label{ut-proof-3}
&\begin{aligned}
 \int_{\Omega}\abs{
\mathbf{v}_t\cdot (\mathbf{v}_t\cdot\nabla )\mathbf{v}}
 \,dx\,dy\leq &\norm{
 \nabla \mathbf{v}}_{L^2}
 \norm{ \mathbf{v}_t}^2_{L^4}\leq
C \norm{
 \nabla \mathbf{v}}_{L^2}
 \norm{ \nabla  \mathbf{v}_t}_{L^2}
\norm{\mathbf{v}_t}_{L^2}\\
\leq &\epsilon
\norm{\nabla \mathbf{v}_t}_{L^2}^2+C_{\epsilon}
  \norm{
 \nabla \mathbf{v}}^2_{L^2}
 \norm{
\mathbf{v}_t}^2_{L^2}.
\end{aligned}
\end{align}

By choosing $\epsilon$ sufficiently small, we combine the results from \eqref{ut-proof-3} to derive the following conclusion:
  \begin{align}\label{ut-proof-5}
\begin{aligned}
&\frac{d}{dt}\norm{\mathbf{v}_t}^2_{L^2}
+\frac{\sigma}{2}  \norm{\nabla\mathbf{v}_t}^2_{L^2
}
+\frac{\sigma}{2} \int_{\partial B(0,b)}\left(\tau \cdot \mathbf{v}_t\right)^2 \,ds
\leq C
\norm{ \mathbf{v}}^2_{H^1}
  \norm{
\mathbf{v}_t}^2_{L^2}.
\end{aligned}
\end{align}
The \autoref{lemma-grad-1-2} and \autoref{lemma-grad}
mean that for large enough time there exists $\sigma_1>0$ such that
  \begin{align}\label{ut-proof-5-new}
\begin{aligned}
&\frac{d}{dt}\norm{\mathbf{v}_t}^2_{L^2}
+ \sigma_1 \norm{
\mathbf{v}_t}^2_{L^2}\leq 0,
\end{aligned}
\end{align}
which, combined with Gronwall's inequality, yields
  \begin{align}\label{ut-proof-6}
\begin{aligned}
\norm{\mathbf{v}_t}^2_{L^2}
\leq &C\norm{\mathbf{v}_t(0)}^2_{L^2}e^{-\sigma_1 t}.
\end{aligned}
\end{align}

To bound $\norm{\mathbf{v}_t(0)}^2_{L^2}$, we multiply the equations \eqref{ut-1} with $\mathbf{v}_t$ in $L^2$ to get
\begin{align*}
\begin{aligned}
&\int_{\Omega}\abs{\mathbf{v}_t}^2\,dx\,dy
-\mu\int_{\Omega}\Delta \mathbf{v}\cdot  \mathbf{v}_t\,dx\,dy
+\int_{\Omega} \mathbf{v}_t\cdot( \mathbf{v}\cdot \nabla )\mathbf{v}\,dx\,dy
=0.
\end{aligned}
\end{align*}
From this, it follows that
  \begin{align*}
\begin{aligned}
\norm{\mathbf{v}_t}^2_{L^2}
\leq \nu \norm{\mathbf{v}_t}_{L^2}
\norm{\Delta \mathbf{v}}_{L^2}
+ \norm{\mathbf{v}_t}_{L^2}
\norm{( \mathbf{v}\cdot \nabla )\mathbf{v}}_{L^2},
\end{aligned}
\end{align*}
which gives
  \begin{align*}
\begin{aligned}
\norm{\mathbf{v}_t(0)}_{L^2}^2\leq
C\norm{\mathbf{v}_0}_{H^2}^2
\end{aligned}
\end{align*}
which together with \eqref{ut-proof-6} yields  \eqref{tul2h1}.
\end{proof}

\subsubsection{Decay of $\norm{\mathbf{v}}_{H^{2}(\Omega))}$}

\begin{lemma}\label{21336}
For the solution $\mathbf{v}$ of the problem \eqref{main=p1}
subject to the boundary conditions \eqref{cond-3} and initial data $\mathbf{v}_0 \in H^{2}\left(\Omega\right)$,
there exists $\sigma_1>0$ such that
\begin{align}\label{tul2h13}
\begin{aligned}
\norm{\mathbf{v}}^2_{H^2}\leq &C\norm{\mathbf{v}_0}_{H^2}^2 e^{-\sigma_1 t}
\end{aligned}
\end{align}
\end{lemma}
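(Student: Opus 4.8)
The plan is to view the steady part of the momentum equation as a generalized Stokes system and to upgrade the already-established lower-order decay estimates to $H^2$ by elliptic regularity. Writing $\eqref{main=p1}_1$ in the form
\[
\mu \Delta \mathbf{v} - \nabla q = \mathbf{v}_t + (\mathbf{v}\cdot\nabla)\mathbf{v} =: \mathbf{g},
\]
I would freeze the time variable and treat this, together with $\nabla\cdot\mathbf{v}=0$ and the mixed boundary conditions \eqref{cond-3}, as a stationary Stokes problem with forcing $\mathbf{g}$. The generalized Stokes estimates of \autoref{Stokes0725} then give
\[
\norm{\mathbf{v}}_{H^2}^2 + \norm{\nabla q}_{L^2}^2 \leq C\left(\norm{\mathbf{g}}_{L^2}^2 + \norm{\mathbf{v}}_{H^1}^2\right),
\]
so that the claim \eqref{tul2h13} reduces to showing that $\norm{\mathbf{g}}_{L^2}^2$ and $\norm{\mathbf{v}}_{H^1}^2$ both decay at an exponential rate. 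There is no circularity here, since $\mathbf{g}$ will be controlled by quantities whose decay was obtained independently of any $H^2$ bound.

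For the forcing I would split $\norm{\mathbf{g}}_{L^2} \leq \norm{\mathbf{v}_t}_{L^2} + \norm{(\mathbf{v}\cdot\nabla)\mathbf{v}}_{L^2}$ and bound the convective term by H\"older's inequality,
\[
\norm{(\mathbf{v}\cdot\nabla)\mathbf{v}}_{L^2} \leq \norm{\mathbf{v}}_{L^4}\norm{\nabla\mathbf{v}}_{L^4} \leq C\norm{\mathbf{v}}_{W^{1,4}}^2.
\]
The first term decays like $e^{-\sigma_1 t}$ by \autoref{utestimate}, while \eqref{3333} with $p=4$ shows that $\norm{\mathbf{v}}_{W^{1,4}}^2$ decays exponentially, hence so does $\norm{\mathbf{v}}_{W^{1,4}}^4$. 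Likewise $\norm{\mathbf{v}}_{H^1}^2 = \norm{\mathbf{v}}_{W^{1,2}}^2$ decays by \eqref{3333} with $p=2$ together with the $L^2$-decay \eqref{houmian0721}. Crucially, all of these rates are strictly positive precisely because $\mu>\mu_c$ forces $\lambda_1(\mu)<0$ through the PES property established in \autoref{posi-eigen}. Collecting the three contributions and defining the new exponent to be the minimum of the individual decay rates (relabelled $\sigma_1$, possibly after shrinking the constant inherited from \autoref{utestimate}) yields \eqref{tul2h13}.

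The only point requiring care is verifying that the forcing $\mathbf{g} = \mathbf{v}_t + (\mathbf{v}\cdot\nabla)\mathbf{v}$ is admissible in the generalized Stokes estimate of \autoref{Stokes0725} under the Navier-slip/stress-free boundary conditions---in particular that its Leray projection lies in $X_0(\Omega)$ and is compatible with the boundary data---and then bookkeeping the several exponential rates so that the slowest one still dominates the sum. Since the substantive analytic work, namely the maximum-principle and vorticity argument behind the $W^{1,p}$ decay \eqref{3333} and the differentiated energy estimate behind the $\mathbf{v}_t$ decay \eqref{tul2h1}, is already complete, this lemma is essentially an elliptic-regularity upgrade and presents no genuinely new obstacle.
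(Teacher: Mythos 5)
Your proposal is correct and follows essentially the same route as the paper: both treat $\mu\Delta\mathbf{v}-\nabla q=\mathbf{v}_t+(\mathbf{v}\cdot\nabla)\mathbf{v}$ as a stationary Stokes system, invoke the generalized Stokes estimate of \autoref{Stokes0725}, and conclude from the previously established exponential decay of $\norm{\mathbf{v}_t}_{L^2}$ (\autoref{utestimate}) and $\norm{\mathbf{v}}_{W^{1,p}}$ (\autoref{lemma-grad-1-2}). The only cosmetic difference is that you absorb $\norm{\nabla q}_{L^2}$ into the left-hand side of the Stokes estimate, whereas the paper keeps it on the right and disposes of it via the separate pressure-decay lemma \autoref{pres-es}.
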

\begin{proof}
From $\eqref{main=p1}_{1}$ and Stokes' estimate, we have
\begin{align}
\begin{aligned}
\mu \norm{\nabla^2 \mathbf{v}}_{L^2}&\leq
\norm{\mathbf{v}_t+ (\mathbf{v} \cdot \nabla )\mathbf{v} +
 \nabla q}_{L^2}\\
&\leq C\left(\norm{\mathbf{v}_t}_{L^2}+
\norm{\mathbf{v}  }_{H^{1}}^2
+\norm{\nabla q }_{L^2}\right),
\end{aligned}
\end{align}
which together with \autoref{lemma-grad-1-2}- \autoref{utestimate} yields \eqref{tul2h13}.
\end{proof}
\subsection{Proof of \autoref{criticalcase}}\label{section33}

Based on \autoref{eigennumber}, we observe that the eigenvalue problem \eqref{polar-l-eigen}, subject to the conditions \eqref{cond-3} and \eqref{average}, possesses a countably infinite number of eigenvalues. 
We denote the solutions of \eqref{polar-l-eigen}, subject to the conditions \eqref{cond-3} and \eqref{average} by $\{(\lambda_{m}, \mathbf{v}_{m})\}_{m\in \N}$, where the eigenvalues, counting multiplicity, are ordered by decreasing real part, that is,
\begin{align*}
-\infty\leftarrow \cdots  \leq\lambda_{3}\leq\lambda_{2}\leq \lambda_1.
\end{align*}

It is not hard to see that
the first eigenvalue of \eqref{polar-l-eigen} has multiplicity two, and $\{\mathbf{v}_{m}\}_{m\in \N}$ forms an orthogonal basis of $X_0(\Omega)$. Hence, all eigenvalues 
$\{\lambda_{m}\}_{m\in \N}$ satisfy 
\begin{align}\label{zero}
\begin{aligned}
&\lambda_{1}=\lambda_{2}=0,\quad \mu=\mu_c,\\
&\lambda_{m}<0,\quad \mu=\mu_c.
\end{aligned}
\end{align}
Let us denote two subsets $E_0$ and $E_0^{\perp}$ as follows
\begin{align}\label{e01}
\begin{aligned}
&E_0=\text{Span}\{\mathbf{v}_{1},\mathbf{v}_{2}\},\\
&E_0^{\perp}=\left\{\mathbf{w}\in X_0(\Omega) |(\mathbf{w},\mathbf{u})_{L^2}=0,~ \forall
\mathbf{u}\in E_0
 \right\}.
 \end{aligned}
\end{align}

For the strong solution $\mathbf{v}$ of \eqref{main=p1}-\eqref{average}, taking the following expansion 
\begin{align}
\mathbf{v}=\sum_{m=1}^{\infty}\alpha_m\mathbf{v}_{m}
\end{align}
testing the abstract form 
$\eqref{abstract-1}_1$ of \eqref{main=p1}-\eqref{average}
by $\mathbf{v}$, integrating by part over $\Omega$, we have
\begin{align*}
\begin{aligned}
\frac{1}{2}\frac{d}{dt}\int_{\Omega}\abs{\mathbf{v}}^2\,dx\,dy
&=\left(-
\mathcal{A}_{\mu}\mathbf{v}+\mathcal{F}(\mathbf{v},\mathbf{v}),
\mathbf{v}\right)
=\left(-
\mathcal{A}_{\mu}\mathbf{v},
\mathbf{v}\right)\\&=
\sum_{m=3}^{\infty}\lambda_{m}\alpha_m^2
\int_{\Omega}\abs{\mathbf{v}_m}^2\,dx\,dy\leq 0.
\end{aligned}
\end{align*}
Integrate the preceding inequality from $0$ to $t$, we get 
\begin{align}\label{deady-1}
\begin{aligned}
\norm{\mathbf{v}}_{L^2}^2\leq 
\norm{\mathbf{v}_0}_{L^2}^2-2\abs{\lambda_{3}}
\sum_{m=3}^{\infty}\alpha_m^2
\int_0^t\norm{\mathbf{v}_m(\tau)}_{L^2}^2\,d\tau
=\norm{\mathbf{v}_0}_{L^2}^2
-2\abs{\lambda_{3}}
\int_0^t\norm{\tilde{\mathbf{v}}(\tau)}_{L^2}^2\,d\tau
\end{aligned}
\end{align}
where $\mathbf{v}=\mathbf{u}+\tilde{\mathbf{v}}\in X_0(\Omega)=E_0\oplus E_0^{\perp}$ in which 
\begin{align*}
\begin{aligned}
&\mathbf{u}=\alpha_1\mathbf{v}_{1}+\alpha_2\mathbf{v}_{2},\\
&\tilde{\mathbf{v}}=\sum_{m=3}^{\infty}\alpha_m\mathbf{v}_{m}.
\end{aligned}
\end{align*}

It is straightforward to observe that for any $\mathbf{v}_0\in X_2(\Omega)$, the solution $\mathbf{v}(t, \mathbf{v}_0)$ of the problem \eqref{main=p1}-\eqref{average} (with $\mathbf{v}(0, \mathbf{v}_0) = \mathbf{v}_0$) is non-increasing in $t$, i.e.,
\[
\norm{\mathbf{v}(t_2, \mathbf{v}_0)}_{L^2} \leq \norm{\mathbf{v}(t_1, \mathbf{v}_0)}_{L^2}, \quad \forall t_1 < t_2, \quad \mathbf{v}_0 \in X_2(\Omega).
\]
Consequently, for any $\mathbf{v}_0\in X_2(\Omega)$, the limit 
\[
\lim_{t \to \infty} \norm{\mathbf{v}(t, \mathbf{v}_0)}_{L^2} \text{ exists.}
\]

For any $\mathbf{v}_0\in X_2(\Omega)$, we have
\[
\lim_{t \to \infty} \norm{\mathbf{v}(t, \mathbf{v}_0)}_{L^2} =
 \lim_{t \to \infty} \norm{\mathbf{u}(t, \mathbf{v}_0)
 + \tilde{\mathbf{v}}(t, \mathbf{v}_0)}_{L^2} = \delta \leq \norm{\mathbf{v}_0}_{L^2}.
\]
The $\omega$-limit set of $\mathbf{v}_0$, which is an invariant set, satisfies
\[
\omega(\mathbf{v}_0) \subset S_\delta = \left\{ \mathbf{w}
 \in X_0(\Omega) \mid \norm{\mathbf{w}}_{L^2} = \delta \right\}.
\]
Since $\omega(\mathbf{v}_0) $ is invariant, for any $\mathbf{w}\in \omega(\mathbf{v}_0) $, we have
\begin{align}\label{decay-2}
\mathbf{v}(t,\mathbf{w}) \subset \omega(\mathbf{v}_0) \subset S_\delta, \quad \forall t \geq 0. 
\end{align}
If $\mathbf{w}=\mathbf{w}_0 + \mathbf{w}_1$, where $\mathbf{w}_0 \in E_0$, $\mathbf{w}_1 \in E_0^\perp$, and $\mathbf{w}_1\neq 0$, then \eqref{deady-1} implies
\[
\norm{\mathbf{v}(t,\mathbf{w})}_{L^2} < \norm{\mathbf{w}}_{L^2}= \delta, \quad \forall t > 0.
\]
This contradicts \eqref{decay-2}. Therefore, we conclude
\[
 \omega(\mathbf{v}_0)  \subset E_0, \quad \forall~ \mathbf{v}_0 \in X_2(\Omega).
\]

If conclusion (2) of \autoref{criticalcase} does not hold, 
then there exists a sequence $\mathbf{v}_0^n \in X_2(\Omega)$ such that $\mathbf{v}_0^n \to 0$ in the $L^2$-norm as $n \to \infty$, with $0 \notin \omega(\mathbf{v}_0^n) \subset E_0$, and
\[
\lim_{n \to \infty} \text{dist}(\omega(\mathbf{v}_0^n), 0) = 0,
\]
which means conclusion (1) of \autoref{criticalcase} holds.

If conclusion (1) of \autoref{criticalcase} does not hold, then there must exist a neighborhood $U \subset X_2(\Omega)$ of $\mathbf{0}$ such that for all $\mathbf{v}_0 \in U$,
\[
\omega(\mathbf{v}_0) = \{0\}.
\]

\subsection{Proof of \autoref{noninstability}}\label{section34}
For $\mu<\mu_c$, it follows from \autoref{posi-eigen} that the linear problem 
\eqref{main=p-l}, subject to conditions \eqref{cond-3}-\eqref{average}, has a solution
as follows $
\mathbf{v}=\mathbf{u}_1e^{\lambda_1 t}$,
where $\mathbf{u}_1\in C^{\infty}(\Omega)\cap X_2(\Omega)$ is the first eigefunction given in \autoref{first-eigen}, $\lambda_{1}>0$, and it satisfies $\norm{\mathbf{u}_1}_{H^2}=1$, 
$\norm{\mathbf{u}_1}_{L^2}=C_1$ and $\norm{\mathbf{u}_1}_{L^1}=C_2$.
Choosing $\mathbf{v}_0:=\mathbf{v}(0)=\delta \mathbf{u}_1$,
for $\delta\in (0,\delta_0)$, by \autoref{existence},
the problem \eqref{main=p1}-\eqref{average} has a global solution
$(\mathbf{v}^{\delta},q^{\delta})\in  C([0,T];H^2(\Omega)\times H^1(\Omega))$,
its initial data $\mathbf{v}^{\delta}(0)=\delta \mathbf{u}_1$ satisfies
$\norm{\mathbf{v}^{\delta}(0)}_{H^2}=\delta$.

For any $\delta \in (0,\delta_0)$ satisfying $\delta<\epsilon_0$, let us define 
\begin{align}
T_{\delta}=\lambda_1^{-1}\ln \frac{\epsilon_0}{\delta},i.e., \epsilon_0=\delta e^{\lambda_1 T_{\delta}}
\end{align}
where $\epsilon_0$ independent of $\delta$ is a small positive constant to be determined,
and define 
\begin{align*}
&T_{*}=\sup\{t\in (0,\infty)|\norm{\mathbf{v}^{\delta}}_{H^2}\leq \delta_0\},\\
&T_{**}=\sup\{t\in (0,\infty)|\norm{\mathbf{v}^{\delta}}_{L^2}\leq 2C_1\delta e^{\lambda_1t}\}.
\end{align*}
By the definitions of $T_{*}$ and $T_{**}$, we have $T_{*}>0$, $T_{**}>0$, and 
\begin{align*}
&\norm{\mathbf{v}^{\delta}(T_{*})}_{H^2}=\delta_0,\quad \text{if}\quad T_{*}<\infty,\\
&\norm{\mathbf{v}^{\delta}(T_{**})}_{L^2}=2C_1\delta e^{\lambda_1T_{**}},\quad \text{if}\quad T_{**}<\infty.
\end{align*}
Note that for $t\leq \min\{T_{\delta},T_{*},T_{**}\}$, by \autoref{existence}, we have
 \begin{align}\label{nonlinear-delta}
\begin{aligned}
&\norm{\mathbf{v}^{\delta}}_{H^2}^2
+\norm{\mathbf{v}^{\delta}_t}_{L^2}^2
+\int_0^t\left(\norm{\nabla \mathbf{v}^{\delta}}_{L^2}^2
+
\norm{\mathbf{v}^{\delta}_t}_{H^1}^2
\right)\,ds
\\&\leq C_0\left(\delta^2
+4C_1^2\delta^2
\int_0^te^{2\lambda_1t}\,ds
\right)\leq 
 C_0\delta^2
 +4C_1^2C_0\delta^2\frac{e^{2\lambda_1t}}{2\lambda_1}
 \leq C_3\delta^2e^{2\lambda_1t}
\end{aligned}
\end{align}
where $C_3$ is independent of $\delta$.

Let $\mathbf{u}^{\delta}$ be the solution of linear problem 
\eqref{cond-3}-\eqref{main=p-l}
 with initial data $\delta \mathbf{u}_1$. Consider $\mathbf{v}_{e}^{\delta}=\mathbf{v}^{\delta}-\mathbf{u}^{\delta}$, it satisfies the following equations
     \begin{align}\label{main=p-ln}
\begin{cases}
\frac{\partial \mathbf{v}_{e}^{\delta}}{\partial t}+
 (\mathbf{v}^{\delta}\cdot \nabla )\mathbf{v}^{\delta} = \mu \Delta \mathbf{v}_{e}^{\delta}-
\nabla q^{\delta}\quad \mathbf{x}\in \Omega,
\\ \nabla \cdot  \mathbf{v}_{e}^{\delta}=0, \quad \mathbf{x}\in \Omega,\\
\mathbf{v}_{e}^{\delta}(0)=0,
\end{cases}
\end{align}
subject to the conditions \eqref{cond-3}-\eqref{average}.
Taking the $L^2$-inner product of \eqref{main=p-ln} with $ \mathbf{v}_{e}^{\delta}$, we have
  \begin{align}\label{ut-proof-1-ln}
\begin{aligned}
\frac{1}{2}\frac{d}{dt}\norm{\mathbf{v}_{e}^{\delta}}_{L^2}^2
&\leq 
-\mu\int_{\Omega}\abs{\nabla \mathbf{v}_{e}^{\delta}}^2\,dx\,dy
-\frac{\mu}{b}\int_{\partial B(0,b)}\left(\tau \cdot \mathbf{v}_{e}^{\delta}\right)^2 \,ds
\\  &\quad +\left(\alpha-\mu a^{-1}\right)\int_{\partial B(0,a)}\left(\tau \cdot \mathbf{v}_{e}^{\delta}\right)^2 \,ds
+
 \int_{\Omega}\abs{
\mathbf{v}_{e}^{\delta}\cdot (\mathbf{v}^{\delta}\cdot\nabla )\mathbf{v}^{\delta}}
 \,dx\,dy\\
 &\leq \lambda_1\norm{\mathbf{v}_{e}^{\delta}}_{L^2}^2
 +\norm{\mathbf{v}_{e}^{\delta}}_{L^2}
 \norm{\mathbf{v}^{\delta}}_{L^4}
 \norm{\nabla\mathbf{v}^{\delta}}_{L^4}\\
 &\leq 
  \lambda_1\norm{\mathbf{v}_{e}^{\delta}}_{L^2}^2
  +C\norm{\mathbf{v}_{e}^{\delta}}_{L^2}
 \norm{\mathbf{v}^{\delta}}_{L^4}^2
 +C\norm{\mathbf{v}_{e}^{\delta}}_{L^2}
 \norm{\nabla\mathbf{v}^{\delta}}_{L^4}^2\\
 &\leq 
   \lambda_1\norm{\mathbf{v}_{e}^{\delta}}_{L^2}^2
   +C\norm{\mathbf{v}_{e}^{\delta}}_{L^2}
 \norm{\mathbf{v}^{\delta}}_{H^2}^2
\end{aligned}
\end{align}
which yields that 
  \begin{align}\label{ut-proof-1-ln-2}
\begin{aligned}
\frac{d}{dt}\norm{\mathbf{v}_{e}^{\delta}}_{L^2}
 \leq 
   \lambda_1\norm{\mathbf{v}_{e}^{\delta}}_{L^2} +C
 \norm{\mathbf{v}^{\delta}}_{H^2}^2.
\end{aligned}
\end{align}
Gronwall inequality and \eqref{nonlinear-delta} give that
  \begin{align}\label{ut-proof-1-ln-3}
\begin{aligned}
\norm{\mathbf{v}_{e}^{\delta}}_{L^2}
 \leq C e^{ \lambda_1t}
 \int_0^t
 \norm{\mathbf{v}^{\delta}}_{H^2}^2\,ds
 \leq  CC_3\delta^2e^{ \lambda_1t}
  \int_0^te^{ 2\lambda_1s}\,ds
  \leq C_4\delta^2e^{ 2\lambda_1t}.
\end{aligned}
\end{align}

Let us choose $\epsilon_0$ such that $\epsilon_0<
\min\{\frac{\delta_0}{\sqrt{C_3}},\frac{C_1}{C_4},\frac{C_2}{2\abs{\Omega}^{\frac{1}{2}}C_4}
\}$. Then we aim to show $T_{\delta}=\min\{T_{\delta},T_{*},T_{**}\}$.
If $T_{*}=\min\{T_{\delta},T_{*},T_{**}\}$, $T_{\delta}>T_{*}$ and $T_{*}<\infty$, we have
\[
\norm{\mathbf{v}^{\delta}(T_{*})}_{H^2}\leq 
\sqrt{C_3}\delta e^{\lambda_1T_{*}}<
\sqrt{C_3}\epsilon_0\leq \delta_0,
\]
which is contradictory to the definition of $T_{*}$.
If $T_{**}=\min\{T_{\delta},T_{*},T_{**}\}$, $T_{\delta}>T_{**}$ and $T_{**}<\infty$, we have
\begin{align*}
\norm{\mathbf{v}^{\delta}(T_{**})}_{L^2}&\leq 
\norm{\mathbf{v}_e^{\delta}(T_{**})}_{L^2}
+\norm{\mathbf{u}^{\delta}(T_{**})}_{L^2}
\leq C_1\delta 
e^{\lambda_1T_{**}}+C_4\delta^2e^{2\lambda_1T_{**}}\\
&\leq 
C_1\delta 
e^{\lambda_1T_{**}}\left(
1+
\frac{C_4}{C_1}\delta e^{\lambda_1T_{**}}
\right)<
C_1\delta 
e^{\lambda_1T_{**}}\left(
1+
\frac{C_4}{C_1}\epsilon_0
\right)\leq 2
C_1\delta 
e^{\lambda_1T_{**}}.
\end{align*}
which is contradictory to the definition of $T_{**}$. By 
$T_{\delta}=\min\{T_{\delta},T_{*},T_{**}\}$, we finally get
\begin{align*}
&\norm{\mathbf{v}^{\delta}(T_{\delta})}_{L^1}\geq 
\norm{\mathbf{u}^{\delta}(T_{\delta})}_{L^1}
-\norm{\mathbf{v}_e^{\delta}(T_{\delta})}_{L^1}
\\&\geq C_2\delta e^{\lambda_1 T_{\delta}}-\abs{\Omega}^{\frac{1}{2}}
\norm{\mathbf{v}_e^{\delta}(T_{\delta})}_{L^2}
\\&\geq
C_2\epsilon_0-\abs{\Omega}^{\frac{1}{2}}
C_4\epsilon_0^2>\frac{C_2\epsilon_0}{2}.
\end{align*}
 Hence, for $p\in [1,+\infty]$, we get that 
 \begin{align*}
   \norm{\mathbf{v}^{\delta}(T_{\delta})}_{L^p}\geq  C\norm{\mathbf{v}^{\delta}(T_{\delta})}_{L^1}
     \geq C\frac{C_2\epsilon_0}{2}=\epsilon.
 \end{align*}

 \section{Proof of \autoref{criticalcase1}}\label{section4}
 
 The annular nature of the region under consideration makes it particularly advantageous to switch to polar coordinates when dealing with the system of equations \eqref{main=p1}-\eqref{average}. By doing so, we can more effectively capture the radial and angular dependencies inherent in the problem. 

The curl in polar coordinates is expressed as
\begin{align}
\omega=\partial_xv_2-\partial_yv_1=
\partial_rv_{\theta}+\frac{v_{\theta}}{r}
-\frac{\partial_{\theta}v_r}{r}.
\end{align}
This formula captures the rotation of the fluid in the annular domain and is essential for understanding the vorticity dynamics.
By eliminating the pressure term $q$ from the Navier-Stokes equations \eqref{main=p1}, we obtain the vorticity equation:
  \begin{align}\label{omega1}
 \frac{\partial \omega}{\partial t}+(\mathbf{v}\cdot\nabla_{r})\omega
    =\mu\Delta_{r}\omega,\quad
     \mathbf{v}\cdot\nabla_{r}=v_{r}\frac{\partial}{\partial r}+\frac{v_{\theta}}{r}\frac{\partial}{\partial \theta},~\Delta_{r}\omega=\left(\frac{\partial^2}{\partial r^2}+\frac{1}{r}\frac{\partial}{\partial r}+\frac{\partial^2}{\partial \theta^2}\right)\omega.
\end{align}
This equation not only describes how the vorticity $\omega$ evolves over time under the influence of the fluid velocity $\mathbf{u} $ and the kinematic viscosity $\mu$,but also can be used for simplifying the system \eqref{main=p1} into a system containing single equation.
In fact,  introducing a streamfunction $\psi$ such that
\begin{align}
v_{r}=-\frac{1}{r}\frac{\partial \psi}{\partial \theta},\quad
v_{\theta}=\frac{\partial \psi}{\partial r},
\end{align}
then the divergence-free condition $
\frac{\partial (rv_{r})}{\partial r}+\frac{\partial v_{\theta}}{\partial \theta}=0$
 is automatically satisfied. This allows us to rewrite the system of equations \eqref{main=p1}-\eqref{average} in terms of the streamfunction $\psi$ as follows
  \begin{align}\label{omega-2}
    \begin{cases}
 \frac{\partial \Delta_r\psi}{\partial t}
    =\mu\Delta_{r}^2\psi+
     \left(\frac{1}{r}\frac{\partial \psi}{\partial \theta}\frac{\partial}{\partial r}-\frac{1}{r}
\frac{\partial \psi}{\partial r}
\frac{\partial}{\partial \theta}\right)
 \Delta_r\psi,\\
\partial_{\theta}\psi=\partial_r^2 \psi-
\left(\frac{1}{r}-\frac{\alpha}{\mu}\right) 
\partial_r \psi=0,\quad r=a,\\
\partial_{\theta}\psi=\partial_r^2 \psi+
\frac{1}{r}\partial_r \psi=0,
 \quad r=b,\\
 \int_{0}^{2\pi}\psi \,d\theta=0,\\
 \psi_{t=0}=\psi_0.
 \end{cases}
\end{align}
We then employ  \eqref{omega-2}
rather than  \eqref{main=p1}-\eqref{average} 
to prove \autoref{criticalcase1}.

In polar coordinates, the annular domain $\Omega$ is then replaced by the domain 
$\Omega_r$ defined as follows
 \begin{align}\label{bianjie0329}
  \begin{aligned}
\Omega_r :=(a,b)\times\T=\left\{\left(r,\theta\right)|a< r< b, 0\leq\theta\leq 2\pi\right\}.
  \end{aligned}
\end{align}
To use the center unstable manifold reduction in \cite{Han2021}, we first introduce some 
functional spaces:
\begin{align*}
&Y(\Omega_r)=\left\{\psi \in L^2(\Omega_r)| \int_{0}^{2\pi}\psi \,d\theta=0
\right\},\\
&Y_0(\Omega_r)=\left\{\psi \in Y(\Omega_r)\cap H^1(\Omega_r)
\right\},\\
&Y_1(\Omega_r)=\left\{\psi \in Y_0(\Omega_r)\cap H^2(\Omega_r)
| \partial_{\theta}\psi|_{r=a,b}=0\right\},\\
&\begin{aligned}
Y_2(\Omega_r)=\bigg\{&\psi \in H^4(\Omega_r)
\cap Y_1(\Omega_r)\bigg|
\partial_r^2 \psi|_{r=b}+
\frac{1}{r}\partial_r \psi|_{r=b}=0,\\
&\partial_r^2 \psi|_{r=a}-
\left(\frac{1}{a}-\frac{\alpha}{\mu}\right) 
\partial_r \psi|_{r=a}=0
\bigg\}.
\end{aligned}
\end{align*}

	Note that $\lambda_1 (\mu_c)=0$ and $\frac{d\lambda_1}{d\mu}\big|_{\mu=\mu_c}<0$ the
PES condition hold at $\mu=\mu_c$, and  $(\mu, \psi)=(\mu_c,0)$ is a bifurcation point of the system 
\eqref{omega-2} \cite{Ma2019}. The authors \cite{Han2021} established a general framework for 
stability and bifurcation of a fluid system whose governing equations
have a generic fourth-second order structure. To employ the general framework \cite{Han2021} for the problem \eqref{omega-2}, we define two linear operator $L_{\mu} :Y_2(\Omega_{r})\to Y(\Omega_{r})$ 
and $A:Y_1(\Omega_{r})\to Y(\Omega_{r})$:
\begin{align}
L_{\mu}\psi=\Delta_r^2 \psi,\quad 
A\psi=\Delta_r \psi.
\end{align}
We also define the nonlinear operator $\G:Y_2(\Omega_{r})\times Y_2(\Omega_{r})
\to Y(\Omega_{r})$ as follows 
\begin{align}\label{nonlinear}
\G(\psi,\phi):=
  \left(\frac{1}{r}\frac{\partial \psi}{\partial \theta}\frac{\partial}{\partial r}-\frac{1}{r}
\frac{\partial \psi}{\partial r}
\frac{\partial}{\partial \theta}\right)
 \Delta_r\phi.
\end{align}
With the help of these operators, \eqref{omega-2} can be cast in the abstract framework as
\begin{align}\label{abstract-2}
\begin{cases}
\frac{d\psi}{dt}=\mathcal{L}_{\mu}\psi+\mathcal{G}(\psi,\psi),\\
\psi(0)=\psi_0\in Y_1(\Omega_{r}),
\end{cases}
\end{align}
where operator $\mathcal{L}_{\mu}$ and $\mathcal{G}$ are defined by $
\mathcal{L}_{\mu}=A^{-1}L_{\mu},\quad \mathcal{G}=A^{-1}\G$. Please note that the abstract system \eqref{abstract-2} is an equivalent system of \eqref{abstract-1}.

Note that $A$ is an isomorphism between $Y_1(\Omega_{r})$ and  $A(Y_1(\Omega_{r}))\subset Y(\Omega_{r})$, and $L_{\mu}(Y_2(\Omega_{r}))\subset A(Y_1(\Omega_{r}))$,
hence the operator $\mathcal{L}_{\mu} := A^{-1}\circ L_{\mu}$ is bounded from $Y_2(\Omega_{r})$ into $Y_1(\Omega_{r})$. Furthermore, due to the classical Sobolev embeddings, the inclusion $Y_2\left(\Omega_{r}\right)\hookrightarrow Y_1(\Omega_{r})$ is dense and compact, and thus $\mathcal{L}_{\mu} :\: D(\mathcal{L}_{\mu} )=Y_2\left(\Omega_{r}\right)\subset Y_1\left(\Omega_{r}\right) \to Y_1(\Omega_{r})$ has a compact resolvent. Thus, the problem \eqref{abstract-1} satisfies all the necessary conditions for the dynamic bifurcation theory of nonlinear evolution equations, as considered in the literature \cite{Ma2019, Han2021}.

Denote now the center-unstable space 
associated with PES condition $\lambda_1 (\mu_c)=0$ and $\frac{d\lambda_1}{d\mu}\big|_{\mu=\mu_c}<0$
by $H_{c}$. We get from the literature \cite{Ma2019, Han2021} that $H_c$ is the linear span of all first eigenfunctions with the corresponding first eigenvalue changing its sign at critical $\mu=\mu_c$.	
There are two different first eigenvectors 
$\Psi_1(r)\sin \theta $  and $\Psi_1(r)\cos \theta$, where $\Psi_1(r)$ satisfies
   \begin{align}\label{polar-l-eigen-4}
  \begin{cases}
  \mu\Delta_{1}^2\Psi_1=\lambda_1\Psi_1,\\
 \Psi_1= \Psi''_1-
\left(\frac{1}{r}-\frac{\alpha}{\mu}\right) 
\Psi'_1=0,\quad r=a,\\
 \Psi_1=\Psi''_1+\frac{1}{r}\Psi'_1=0,\quad r=b.
  \end{cases}
\end{align}

Note that any linear combination $x_1\Psi_1\cos \theta+x_2\Psi_1\sin \theta$ can be rewritten as
   \begin{align*}
&x_2\Psi_1\sin \theta+x_1\Psi_1\cos \theta=
z\psi_1+\overline{z\psi_1},\\&
	\psi_1=\Psi_1e^{i\theta},\quad z=\frac{x_1}{2}-\frac{x_2}{2}i.
\end{align*}
The use of complex expressions can bring convenience in calculations, we then employ $z\psi_1+\overline{z\psi_1}$ instead of $x_2\Psi_1\sin \theta+x_1\Psi_1\cos \theta$
in the subsequent analysis.

 Let us denote the two-dimensional center-unstable space by
\begin{equation} \label{Hc}
  H_c=\left\{ z\psi_1+\overline{z\psi_1} \, \mid \, z\in \mathbb{C}\right\},
\end{equation}
The center manifold function $h$, by definition, is defined locally from $H_c$ to the stable space $H_s:=(H_c)^\perp$ and its graph is tangent to $H_c$ at the origin.

Let us decompose the solution $u$ into its center-unstable and stable parts
\begin{align}\label{decomp}
u=u_c+u_s,\quad u_c\in H_c, \quad u_s\in H_s,
\end{align}
where $u_c = P_c (u)$ and $u_s = P_s(u)$, with $P_c$ and $P_s$ denoting the projections of the phase space $H$ onto the center-unstable space $H_c$ and the stable space $H_s$ respectively.

By \eqref{Hc}, we can further write the center-unstable part of the solution as
\begin{align}\label{stable-space}
  u_c(t)  = z(t)\psi_1+\overline{z(t)\psi_1} .
\end{align}
We rewrite the abstract equation \eqref{abstract-2} as
\begin{align} \label{abstract equ1}
\frac{du}{dt}=A^{-1}Lu+A^{-1}\G(u).
\end{align}
and then project it to the center-unstable and stable parts to obtain
\begin{align}\label{equeee1}
\begin{aligned}
&\frac{du_c}{dt}=A^{-1}Lu_c+P_cA^{-1}\G(u_s+u_c,u_s+u_c),\\
&\frac{du_s}{dt}=A^{-1}Lu_s+P_sA^{-1}\G(u_s+u_c,u_s+u_c).
\end{aligned}
\end{align}

Taking the inner product of the first equation above with the eigenvector $\psi_1$, obtain
\begin{align} \label{abstract equ2}
\begin{aligned}
&\frac{dz}{dt}=\lambda_1z+\left(A^{-1}\G(u_s+u_c,u_s+u_c),\psi_1\right),\\
&\frac{du_s}{dt}=A^{-1}Lu_s+P_sA^{-1}\G(u_s+u_c,u_s+u_c).
\end{aligned}
\end{align}
By the center manifold theory, the dynamics of the whole system, near the criticality, is captured by the dynamics of the amplitudes $z$ of the modes spanning the center space, that is by the first two equations above. Now, our purpose is to obtain a closed form for the ODE of $z$ in \eqref{abstract equ2} by approximating the center manifold $h(u_c)$ and then making the substitution 
\begin{align}\label{decomp-2}
u_s = h(u_c)
\end{align}
 in the above equations. That is, we need to study the dynamics on the center manifold.

Note that
\begin{align} \label{manifold}
\frac{d h(u_c)}{dt}=\frac{dh(u_c)}{du_c}\frac{d u_c}{dt}
=\frac{dh(u_c)}{du_c}\left(
\frac{dz}{dt}\psi_1+\frac{d\overline{z}}{dt}\overline{\psi_1}
\right).
\end{align}
By using the expressions for $\dfrac{du_c}{dt}$ and $\dfrac{du_s}{dt}$ in \eqref{equeee1}, the above equation becomes
\begin{equation} \label{comp1}
  \begin{aligned}
  &A^{-1}Lh(u_c)+P_sA^{-1}\G(h(u_c)+u_c,h(u_c)+u_c)\\&
  =\frac{dh(u_c)}{du_c}\times\left(A^{-1}Lu_c+P_cA^{-1}\G(h(u_c)+u_c,h(u_c)+u_c)\right).
  \end{aligned}
\end{equation}

By the tangency of the center manifold function at the origin, we have the approximation
\begin{equation} \label{tangency}
  h(u_c)=h_2(u_c)+o(\abs{u_c}^2)
  =\sigma (u_c)^2+o(\abs{u_c}^2)
  =g_{11}z^2+g_{12}\abs{z}^2+g_{22}\overline{z}^2 +o(\abs{z}^2),
\end{equation}
where $h_2$ denotes the quadratic terms of the center manifold.
Due to this tangency requirement, to the lowest order, the equation \eqref{comp1} can be written as
\begin{align}
\begin{aligned}
A^{-1}Lh_2(u_c)+P_sA^{-1}\G(u_c,u_c)=
2\sigma \lambda_1(u_c)^2
\end{aligned}
\end{align}
from which one obtains the equations governing the coefficients of the center manifold function as
\begin{align}\label{hig-order1}
\begin{cases}
A^{-1}Lg_{11}-2g_{11}\lambda_1=
-P_sA^{-1}\G(\psi_1,\psi_1),\\
A^{-1}Lg_{12}-2g_{12}\lambda_1=-
P_sA^{-1}\left(\G(\psi_1,\overline{\psi_1})+\G(\overline{\psi_1},\psi_1)\right),\\
A^{-1}Lg_{22}-2g_{22}\lambda_1=-P_sA^{-1}\G(\overline{\psi_1},\overline{\psi_1})
\end{cases}
\end{align}
where $g_{11},g_{12}$ and $g_{22}$ are subject to the conditions \eqref{cond-3}.

Now our purpose is to obtain the exact expressions of the coefficients $g_{ij}$ of the center manifold function from these equations. For this reason, we need to obtain the exact expressions of the nonlinear interactions of the critical modes, which we list below
\begin{align}\label{nonlinear-gg}
\begin{aligned}
&\G(\psi_1,\psi_1)=e^{2i\theta}i
  \left(\frac{\Psi_1}{r}\frac{d}{dr}-\frac{1}{r}
\frac{d\Psi_1}{dr}\right)
 \Delta_1\Psi_1=\overline{\G(\overline{\psi_1},\overline{\psi_1})}\in H_s,\\
&\G(\psi_1,\overline{\psi_1})=
i
  \left(\frac{\Psi_1}{r}\frac{d}{dr}+\frac{1}{r}
\frac{d\Psi_1}{dr}\right)
 \Delta_1\Psi_1=\overline{\G(\overline{\psi_1},\psi_1)}.
 \end{aligned}
\end{align}

Making use of \eqref{nonlinear-gg}, we see that 
\[
A^{-1}Lg_{12}-2g_{12}\lambda_1=0
\]
which and $\lambda_{1}\left(\mu_{c}\right)=0$ yields that 
$g_{12}=0$. In addition, $g_{22}=\overline{g_{11}}$, and 
\[
g_{11}=e^{2i\theta}G_{11},
\]
where $G_{11}$ solves the following system 
 \begin{align}\label{polar-l-eigen-4-cond2}
  \begin{cases}
  \mu\Delta_{2}^2G_{11}-2\lambda_1\Delta_{2}G_{11}=-i\left(\frac{\Psi_1}{r}\frac{d}{dr}-\frac{1}{r}
\frac{d\Psi_1}{dr}\right)\Delta_{1}\Psi_{1},\\
G_{11}= G_{11}''-
\left(\frac{1}{r}-\frac{\alpha}{\mu}\right) 
G_{11}'=0,\quad r=a,\\
G_{11}=G_{11}''+\frac{1}{r}G_{11}'=0,\quad r=b.
  \end{cases}
\end{align}

Taking 
\[
 h(u_c)=h_2+o(\abs{z}^2)=e^{2i\theta}G_{11}z^2+e^{-2i\theta}\overline{G_{11}}\overline{z}^2+o(\abs{z}^2)
\]
and substituting it into $\eqref{abstract equ2}_1$, we get 
\begin{align} \label{abstract equ3}
&\begin{aligned}
\frac{dz}{dt}=&\lambda_1z+\left(A^{-1}\G(u_c,u_c),\psi_1\right)\\
&+\left(A^{-1}\G(u_c,h_2),\psi_1\right)
+\left(A^{-1}\G(h_2,u_c),\psi_1\right)
+o\left(\abs{z}^{3}\right).
\end{aligned}
\end{align}
Note that the quadratic terms in the reduced equations
\eqref{abstract equ3} are given by
\begin{align} \label{Coefficient}
\begin{aligned}
\left(A^{-1}\G(u_c,u_c),\psi_1\right)=0,
\end{aligned}
\end{align}
 thanks to the fact that
\[
P_cA^{-1}\G(u_c,u_c)=\mathbf{0}.
\]
Thus we need to derive the exact expressions of the cubic terms in \eqref{abstract equ3} which are given by
\begin{align*}% \label{Coefficient2}
\begin{aligned}
&\left(A^{-1}\G(u_c,h_2),\psi_1\right)
+\left(A^{-1}\G(h_2,u_c),\psi_1\right)=lz\abs{z}^2+o(\abs{z}^2)
\end{aligned}
\end{align*}
where we have used
\begin{align}\label{Coefficient2}
\begin{aligned}
l=&\left(A^{-1}\G\left(ze^{i\theta}\Psi_1+\overline{z}
e^{-i\theta}\Psi_1
,e^{2i\theta}G_{11}z^2+e^{-2i\theta}\overline{G_{11}}\overline{z}^2\right),
e^{i\theta}\Psi_1\right)
\\&+\left(A^{-1}\G\left(e^{2i\theta}G_{11}z^2+e^{-2i\theta}\overline{G_{11}}\overline{z}^2,ze^{i\theta}\Psi_1+\overline{z}
e^{-i\theta}\Psi_1\right),
e^{i\theta}\Psi_1\right)
\\=&
\left(A^{-1}\G\left(\overline{z}
e^{-i\theta}\Psi_1
,e^{2i\theta}G_{11}z^2\right),
e^{i\theta}\Psi_1\right)
\\&+\left(A^{-1}\G\left(e^{2i\theta}G_{11}z^2,\overline{z}
e^{-i\theta}\Psi_1\right),
e^{i\theta}\Psi_1\right)
\end{aligned}
\end{align}
and
\begin{align*}% \label{Coefficient2}
&\begin{aligned}
0=&\left(A^{-1}\G\left(ze^{i\theta}\Psi_1,e^{2i\theta}G_{11}z^2+e^{-2i\theta}\overline{G_{11}}\overline{z}^2\right),
e^{i\theta}\Psi_1\right)
\\&+\left(A^{-1}\G\left(e^{2i\theta}G_{11}z^2+e^{-2i\theta}\overline{G_{11}}\overline{z}^2,ze^{i\theta}\Psi_1\right),
e^{i\theta}\Psi_1\right)
\end{aligned}\\
&\begin{aligned}
0=&\left(A^{-1}\G\left(\overline{z}
e^{-i\theta}\Psi_1
,e^{-2i\theta}\overline{G_{11}}\overline{z}^2\right),
e^{i\theta}\Psi_1\right)
\\&+\left(A^{-1}\G\left(e^{-2i\theta}\overline{G_{11}}\overline{z}^2,\overline{z}
e^{-i\theta}\Psi_1\right),
e^{i\theta}\Psi_1\right).
\end{aligned}
\end{align*}

Let us further denote 
\[
z_1=\Re z,\quad z_2=\Im z,
\]
 we then transform the equations \eqref{abstract-1} or the original system \eqref{omega-2}
 on center manifold $\Phi$ into a lower-order approximate system
 of ordinary differential equations (ODEs) in the following form:
	\begin{align}\label{req-m1}
\begin{cases}
	\frac{dz_1}{dt} = \lambda_{1}(\mu) z_1+ lz _1\left(z_1^2+z_2^2\right)+o\left(\abs{z}^{3}\right),\\
		\frac{dz_2}{dt} = \lambda_{1}(\mu) z_2+ l z _2\left(z_1^2+z_2^2\right)+o\left(\abs{z}^{3}\right),
\end{cases}
	\end{align}
As a result, 
the dynamics of
the system \eqref{abstract-1} or \eqref{omega-2}
with $\mu$ in the vicinity of $\mu_c$ is now completely reduced 
to that of the system of ODEs \eqref{req-m1}. In other words, the bifurcation types of
the system \eqref{omega-2} at $\mu=\mu_c$ is determined by 
the bifurcation types of \eqref{req-m1} at $\mu=\mu_c$. We can also get the bifurcation solutions
by analyzing the lower-order system \eqref{req-m1}.
The parameter $l$ in the system \eqref{req-m1}
is called Lyapunov coefficient \cite{Luo1997}, whose sign gives the dynamics near the trivial solution
$(z_1,z_2)=(0,0)$, details is described in the following lemma:
\begin{lemma}
Based on the system  \eqref{req-m1},we have two conclusions:
\begin{enumerate}
\item[ \rm{(1) }] If $l<0$, the bifurcation
at $\mu=\mu_c$  is supercritical.  In this case, the system \eqref{req-m1} admits an infinite number of stable steady-state solutions $(z_1,z_2)=(s_1,s_2)$ for $\mu<\mu_c$. These solutions form a local ring attractor, where $\abs{\mathbf{s}}^2=\left(s_1^2+s_2^2\right)=-\lambda_1/l$, shown in \autoref{region}. 
\item[  \rm{(2) })] If $l>0$,  the bifurcation at $\mu=\mu_c$  is subcritical. Here, the system \eqref{req-m1} has an infinite number of unstable steady-state solutions $(z_1,z_2)=(s_1,s_2)$ for $\mu>\mu_c$.
These solutions constitute a local ring repeller, where $\abs{\mathbf{s}}^2=\left(s_1^2+s_2^2\right)=-\lambda_1/l$, shown in \autoref{region1}.
\end{enumerate}
\end{lemma}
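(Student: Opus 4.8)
The plan is to pass to polar coordinates in the reduced system \eqref{req-m1} and to exploit the rotational invariance that the annular geometry confers on \eqref{main=p1}--\eqref{average}. Writing $z_1=\rho\cos\phi$ and $z_2=\rho\sin\phi$ with $\rho\geq 0$, a direct computation of $\rho\dot\rho=z_1\dot z_1+z_2\dot z_2$ and $\rho^2\dot\phi=z_1\dot z_2-z_2\dot z_1$ collapses the cubic nonlinearity (which is purely radial, $l(z_1^2+z_2^2)z_i$) into a single amplitude equation, yielding
\begin{align}\label{radial-eq}
\dot\rho=\lambda_1(\mu)\rho+l\rho^3+o(\rho^3),\qquad \dot\phi=o(\rho^2).
\end{align}
The angular variable decouples to leading order, which is the analytic shadow of the fact that rotating any steady solution of \eqref{main=p1}--\eqref{average} by an arbitrary angle again produces a steady solution; this symmetry is the structural source of the claimed \emph{infinitude} of bifurcating states.

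Next I would locate the nontrivial equilibria from the leading balance $\lambda_1(\mu)+l\rho^2=0$, i.e. $\rho_*^2=-\lambda_1(\mu)/l$. Applying the implicit function theorem to $f(\rho,\mu):=\lambda_1(\mu)\rho+l\rho^3+o(\rho^3)$ shows this branch persists for the full system near $\mu_c$, with $\rho_*^2=-\lambda_1(\mu)/l+o(|\mu-\mu_c|)$, and a positive radius $\rho_*>0$ exists precisely when $\lambda_1(\mu)$ and $l$ carry opposite signs. Invoking the PES condition established in \autoref{posi-eigen} --- namely $\lambda_1(\mu)>0$ for $\mu<\mu_c$, $\lambda_1(\mu)<0$ for $\mu>\mu_c$, and $\frac{d\lambda_1}{d\mu}\big|_{\mu=\mu_c}<0$ --- one obtains: for $l<0$ the branch exists for $\mu<\mu_c$ (supercritical), while for $l>0$ it exists for $\mu>\mu_c$ (subcritical). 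Each admissible $\rho_*$ sweeps out, through the angle $\phi$, the whole circle $\{(\rho_*\cos\phi,\rho_*\sin\phi):\phi\in[0,2\pi)\}$ of steady states, delivering the asserted one-parameter (hence infinite) family with $|\mathbf{s}|^2=s_1^2+s_2^2=-\lambda_1/l$.

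For the stability assertion I would linearize the radial flow at $\rho_*$. Since $f_\rho(\rho,\mu)=\lambda_1(\mu)+3l\rho^2+o(\rho^2)$, substituting $\rho_*^2=-\lambda_1/l$ gives the transverse eigenvalue $f_\rho(\rho_*,\mu)=-2\lambda_1(\mu)+o(1)$. Thus in the supercritical case $l<0$ we have $\lambda_1>0$, so $f_\rho(\rho_*)<0$ and the circle attracts transversally; in the subcritical case $l>0$ we have $\lambda_1<0$, so $f_\rho(\rho_*)>0$ and the circle repels. Together with the neutral direction tangent to the circle, this identifies the invariant circle as a normally hyperbolic \emph{ring attractor} when $l<0$ and a \emph{ring repeller} when $l>0$.

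The main obstacle is the rigorous treatment of the $o(|z|^3)$ remainder, which a priori need not respect the rotational symmetry and could in principle fracture the continuous ring into finitely many isolated equilibria. To close this gap I would invoke the exact $SO(2)$-equivariance of \eqref{main=p1}--\eqref{average}, which forces the full reduced vector field to commute with $z\mapsto e^{i\theta_0}z$ and hence to take the form $\dot z=z\,G(|z|^2,\mu)$; since the linear and cubic coefficients here are real, $\operatorname{Im}G\equiv 0$ on the center space, so $\dot\phi\equiv 0$ and each critical circle $\rho=\rho_*$ consists \emph{exactly} of equilibria rather than being a truncation artifact. The attracting or repelling character then persists by normal hyperbolicity (Fenichel's theorem), the transverse eigenvalue $-2\lambda_1(\mu)$ staying bounded away from zero for $\mu\neq\mu_c$ near $\mu_c$. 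An alternative that avoids invariant-manifold machinery is to note that \eqref{radial-eq} is a one-dimensional flow whose sign structure near $\rho_*$ is governed by $-2\lambda_1(\mu)$, so the amplitude conclusions are insensitive to the higher-order terms.
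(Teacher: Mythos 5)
Your proposal is correct, and its core computation is exactly the ``standard'' argument the paper alludes to: the paper's own proof consists of one sentence --- replace \eqref{req-m1} by the truncated system \eqref{req-m2} and omit the details --- so your polar-coordinate reduction $\dot\rho=\lambda_1\rho+l\rho^3$, the identification $\rho_*^2=-\lambda_1/l$, the use of the PES condition from \autoref{posi-eigen} to sort the two cases, and the transverse eigenvalue $f_\rho(\rho_*)=-2\lambda_1$ are precisely what the authors leave unwritten. Where you genuinely go beyond the paper is in confronting the $o(|z|^3)$ remainder: since the lemma is stated ``based on the system \eqref{req-m1}'' (remainder included), an arbitrary higher-order perturbation could indeed destroy the continuum of equilibria, leaving only an invariant circle (or finitely many equilibria on it), and the paper silently discards this issue by truncating. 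Your appeal to $SO(2)$-equivariance is the right repair, with one caveat worth stating explicitly: rotational equivariance alone forces $\dot z=z\,G(|z|^2,\mu)$ with $G$ a priori \emph{complex}, so an angular drift $\dot\phi=\operatorname{Im}G$ is not excluded by that symmetry by itself; one needs either the reflection symmetry $\theta\mapsto-\theta$ of the annular problem or the explicit reality of $\lambda_1$ and $l$ as computed in \eqref{variation-one} and \eqref{Coefficient2} to conclude $\operatorname{Im}G\equiv0$ and hence that the circle consists of genuine equilibria rather than a relative equilibrium. With that caveat noted (or with the lemma read as a statement about the amplitude equation only, as your final remark on the one-dimensional radial flow allows), your argument is complete and is strictly more careful than the paper's.
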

\begin{proof}
The results can be abtianed by consider the following system 
	\begin{align}\label{req-m2}
\begin{cases}
	\frac{dz_1}{dt} = \lambda_{1}(\mu) z_1+ lz _1\left(z_1^2+z_2^2\right),\\
		\frac{dz_2}{dt} = \lambda_{1}(\mu) z_2+ l z _2\left(z_1^2+z_2^2\right).
\end{cases}
	\end{align}
The proof is standard, we omit it here.
\end{proof}

\begin{figure}[tbh]
\centering 
        \begin{tikzpicture}[>=stealth',xscale=1,yscale=1,every node/.style={scale=1.5}]
\draw [thick,->] (0,-2) -- (0,2) ;
\draw [thick,->] (-2,0) -- (2,0) ;
\draw [thick,->] (0,0) -- (0.707,0.707) ;
\draw [thick,->] (0,0) -- (-0.707,0.707) ;
\draw [thick,->] (0,0) -- (-0.707,-0.707) ;
\draw [thick,->] (0,0) -- (0.707,-0.707) ;
\draw [thick,->] (1.5,1.5) -- (0.707,0.707) ;
\draw [thick,->] (-1.5,1.5) -- (-0.707,0.707) ;
\draw [thick,->] (-1.5,-1.5) -- (-0.707,-0.707) ;
\draw [thick,->] (1.5,-1.5) -- (0.707,-0.707) ;
\draw [thick,->] (0,0) -- (0.5,0) ;
\draw [thick,->] (0,0) -- (-0.5,0) ;
\draw [thick,->] (0,0) -- (0,0.5) ;
\draw [thick,->] (0,0) -- (0,-0.5) ;
\node [below right] at (2.1,0) {$z_1$};
\node [above right] at (1,0) {$\abs{s}$};
\node [below right ] at (1,0) {$\mathbf{z}_1$};
\node [above right ] at (0,1) {$\mathbf{z}_2$};
\node [below left ] at (-1,0) {$\mathbf{z}_3$};
\node [below right] at (0,-1) {$\mathbf{z}_4$};
\node [right] at (0,2.1) {$z_2$};
\draw[domain = -2:360][draw=blue, ultra thick,samples = 200] plot({cos(\x)}, {sin(\x)});
\draw[fill,red] (1,0) circle [radius=2pt];
\draw[fill,red] (0,1) circle [radius=2pt];
\draw[fill,red] (-1,0) circle [radius=2pt];
\draw[fill,red] (0,-1) circle [radius=2pt];
\draw[fill,black] (0,0) circle [radius=2.5pt];
\end{tikzpicture}
\caption{For the system \eqref{req-m1}, each point $\mathbf{s}$ on the blue circle represents an equilibrium of \eqref{req-m1} as $\mu<\mu_c$, it is stable, which the equilibrium $(0,0)$ becomes unstable. Correspondingly,
$\mathbf{\mathbf{s}}\Psi_1e^{i \theta}+\overline{\mathbf{s}}\Psi_1e^{-i\theta}
+e^{2i\theta}G_{11} \mathbf{s}^2
+e^{-2i\theta}\overline{G_{11}}\overline{\mathbf{s}}^2+o(\abs{\mathbf{s}}^2)$
 represents a stable equilibrium of equations \eqref{omega-2}. The four points $\mathbf{z}_1-\mathbf{z}_4$ given by
$\mathbf{z}_1=\left(\sqrt{-\lambda_1/l},0\right),
\mathbf{z}_2=\left(0,\sqrt{-\lambda_1/l}\right),
\mathbf{z}_3=\left(-\sqrt{-\lambda_1/l},0\right),
\mathbf{z}_4=\left(0,-\sqrt{-\lambda_1/l}\right)$
correspond to four
stable equilibria 
$\mathbf{z}_j\Psi_1e^{i \theta}+\overline{\mathbf{z}_j}\Psi_1e^{-i\theta}
+e^{2i\theta}G_{11} \mathbf{z}_j^2
+e^{-2i\theta}\overline{G_{11}}\overline{\mathbf{z}_j}^2+o(\abs{\mathbf{z}_j}^2)$
 of the system of equations \eqref{omega-2} $(j=1,\cdots,4)$
 }
\label{region}
\end{figure}
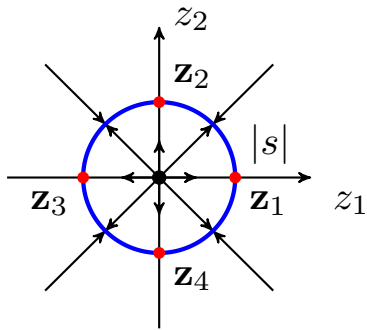

\begin{figure}[tbh]
\centering 
        \begin{tikzpicture}[>=stealth',xscale=1,yscale=1,every node/.style={scale=1}]
\draw [thick,->] (0,-1.5) -- (0,1.5) ;
\draw [thick,->] (-1.5,0) -- (1.5,0) ;
\draw [thick,->] (0.707,0.707)-- (0.3,0.3);
\draw [thick,->]  (-0.707,0.707) -- (-0.3,0.3);
\draw [thick,->] (-0.707,-0.707)-- (-0.3,-0.3) ;
\draw [thick,->] (0.707,-0.707)-- (0.3,-0.3) ;
\draw [thick,-] (0.3,0.3) -- (0,0) ;
\draw [thick,-] (-0.3,0.3) -- (0,0) ;
\draw [thick,-] (-0.3,-0.3) -- (0,0) ;
\draw [thick,-](0.3,-0.3) -- (0,0) ; 
\draw [thick,->] (1,0)  -- (0.5,0) ;
\draw [thick,->] (-1,0) --(-0.5,0) ;
\draw [thick,->] (0,1) -- (0,0.5);
\draw [thick,->] (0,-1) -- (0,-0.5);
\node [below right] at (1.6,0) {$z_1$};
\node [above right] at (1,0) {$\abs{s}$};
\node [right] at (0,1.6) {$z_2$};
\draw[domain = -2:360][draw=black, ultra thick,samples = 200] plot({cos(\x)}, {sin(\x)});
\draw[fill,blue] (0,0) circle [radius=2.5pt];
\end{tikzpicture}
\caption{Each point on circle represents an equilibrium of \eqref{req-m1} as $\mu>\mu_c$, it is unstable, while the trivial equilibrium $(0,0)$ is stable.
 Correspondingly,
$s\Psi_1(r)e^{i\theta}+\overline{s}\Psi_1(r)e^{-i\theta}
+e^{2i\theta}G_{11} \mathbf{s}^2
+e^{-2i\theta}\overline{G_{11}}\overline{\mathbf{s}}^2+o(s^2)$
 represents an unstable equilibrium of the system of equations \eqref{omega-2}.
}\label{region1}
\end{figure}
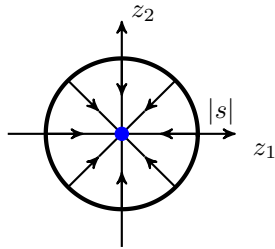
\begin{remark}
Consider any point  $(s_1,s_2)$ on the blue circle and define  $s=s_1+s_2i$,
based on \eqref{decomp} ,\eqref{stable-space} and \eqref{decomp-2}. The corresponding bifurcation solution $\psi_{s}(r,\theta)$ of the problem \eqref{omega-2},
approximated to the second order, takes the following form:
\begin{align}\label{flow-p}
\begin{aligned}
\psi_{s}(r,\theta)=&
s\Psi_1e^{i \theta}+\overline{s}\Psi_1e^{-i\theta}
+e^{2i\theta}G_{11} s^2
+e^{-2i\theta}\overline{G_{11}}\overline{s}^2+o(s^2),\quad \abs{s}=\sqrt{-\lambda_1/l}.
\end{aligned}
\end{align}
We select red points $\mathbf{z}_j$ $(j=1,\cdots,4)$ in \autoref{region}
to analyze bifurcation solutions.
\end{remark}

 \section{Numerical investigations}

The Lyapunov coefficient  $l$, defined in \eqref{Coefficient2}, determines both the type of bifurcation at $\mu=\mu_c$ and the stability or instability of the equilibria of the Navier–Stokes equations under Navier boundary conditions  \eqref{cond-1}-\eqref{cond-2}. To assess the stability of these equilibria bifurcated at $\mu=\mu_c$, we numerically estimate
$l$. For $a=1$ and each $b\in \{4.99,5,5.01,5.02\}$, $\mu_c$ is a function of $\alpha$. For $5\leq\alpha \leq 15$, we compute and plot $\mu_c$, as shown in \autoref{Critical_p}.
From \autoref{Critical_p}, we observe that the Lyapunov coefficient $l$ is negative for $(\alpha,b)$ at some parameter region and positive for others. Therefore, both supercritical and subcritical bifurcations occur in the Navier–Stokes system at $\mu=\mu_c$ under the mixed boundary conditions \eqref{cond-1}-\eqref{cond-2}. Hence, both
first and second conclusions of \autoref{criticalcase} occur in the Navier-Stokes equations 
\eqref{main=p1}-\eqref{average}.

\begin{figure}[h]
  \centering
  \includegraphics[width=.6\textwidth,height=.4\textwidth]{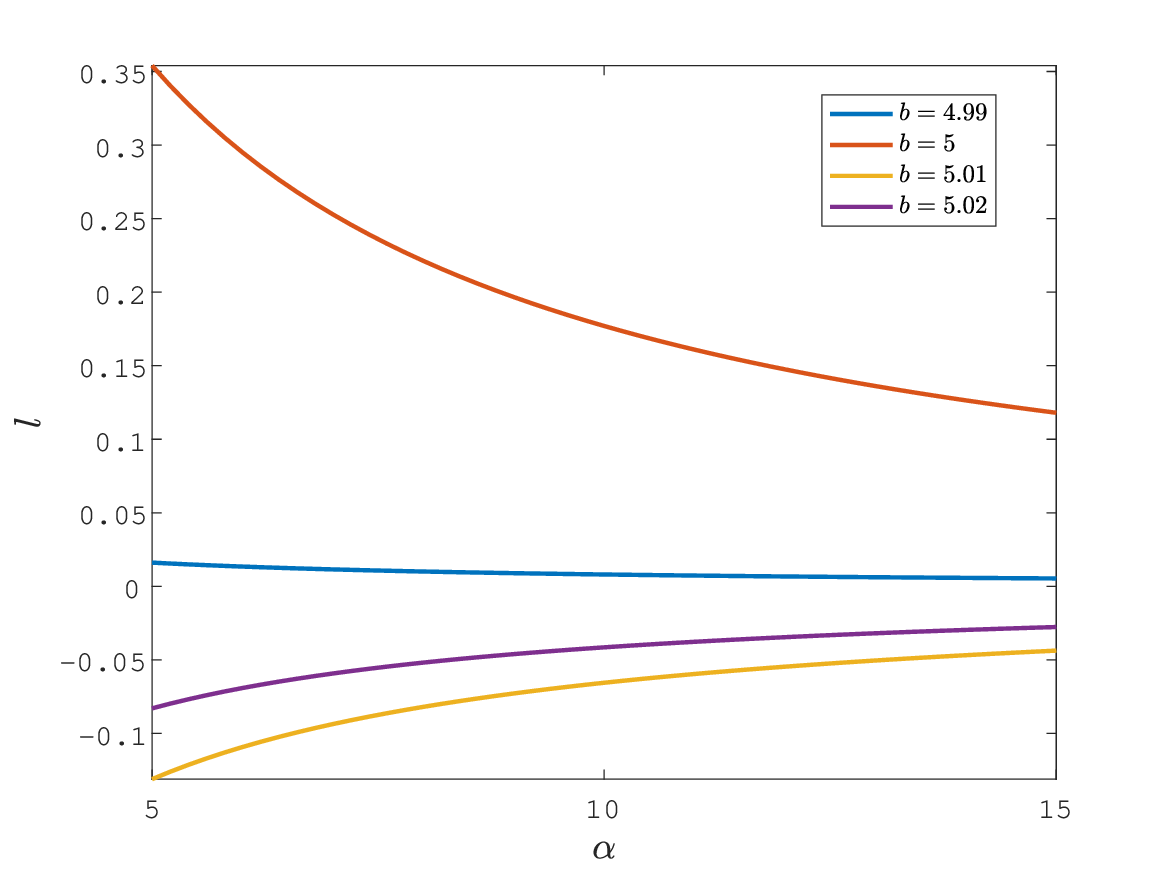}\\
  \caption{The Lyapunov coefficient $l$ as a function of $\alpha$
  with different $b\in [5,15]$ with fixed $a=1$.
  }\label{Critical_p}
\end{figure}

Given that both supercritical and subcritical bifurcations can occur in the Navier–Stokes equations
at $\mu=\mu_c$ under mixed boundary conditions \eqref{cond-1}-\eqref{cond-2},
 a fundamental follow-up question is to determine the regions on the parameter plane
 $(\alpha,b)$ that give rise to different types of bifurcation, which is corresponding to
 conclusions (1) and (2) of \autoref{criticalcase}. From a numerical perspective, once the parameters $(\alpha,b)$ are specified, evaluating the map  $(\alpha,b)\to l$ is relatively straightforward, albeit computationally intensive. Therefore, the task of identifying these regions can be effectively executed using a bisection method without major issues. This process yields a curve in the parameter space, corresponding approximately to $l$, that represents an effective boundary between the region where a supercritical/subcritical bifurcation occurs. The results are shown in \autoref{Critical_p22}. For parameter $(\alpha,b)$ in the yellow region of
 \autoref{Critical_p22}, the bifurcation of the system of equations
 \eqref{main=p1}-\eqref{average} or the equivalent problem \eqref{omega-2} at $\mu=\mu_c$ is supercritical.
 That is,  (2) of \autoref{criticalcase} holds. While $(\alpha,b)$ in the green region of
 \autoref{Critical_p22}, 
 the bifurcation is subcritical. Equivalently, (1) of \autoref{criticalcase} holds.
 
 \begin{figure}[h]
  \centering
  \includegraphics[width=.6\textwidth,height=.4\textwidth]{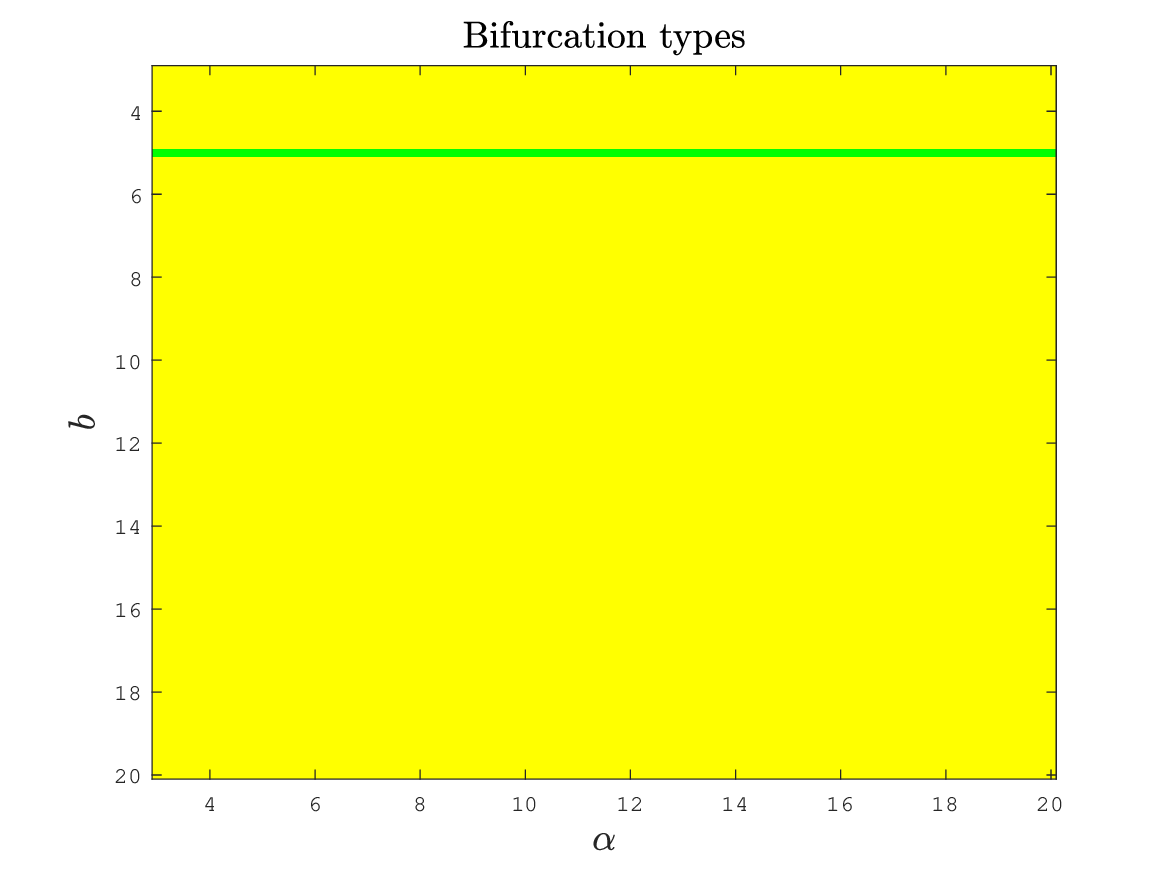}\\
  \caption{
  Yellow region (supercritical bifurcation), green region (subcritical bifurcation) where $a=1$.
  }\label{Critical_p22}
\end{figure}

Taking $a=1, b=3$ and $\alpha=5$, we have $\mu_c= 1.3404$. For $\mu=1.3403$, we have $l=-0.2783$, and 
$\sqrt{-\lambda_1/l}=  0.0578$. On the circle $\abs{\mathbf{s}}=
     0.0578$, 
for the four points $p_j$ with $j=1,2,3,4$, denote the corresponding 
bifurcation solutions as $\psi^{b,j}$, which are given by
\begin{align}\label{ooo}
\begin{aligned}
&\begin{aligned}
\psi^{b,1}\approx&
    0.0578\left(\Psi_1e^{i\theta}+\Psi_1e^{-i\theta}\right)
+  0.0033\left(e^{2i\theta}G_{11} 
+e^{-2i\theta}\overline{G_{11}}\right),
\end{aligned}
\\
&\begin{aligned}
\psi^{b,2}\approx&
    0.0578\left(i\Psi_1e^{i\theta}-i\Psi_1e^{-i\theta}\right)
-  0.0033\left(e^{2i\theta}G_{11}
+e^{-2i\theta}\overline{G_{11}}\right),
\end{aligned}\\
&\begin{aligned}
\psi^{b,3}\approx&-
    0.0578\left(\Psi_1e^{i\theta}+\Psi_1e^{-i\theta}\right)
+ 0.0033\left(e^{2i\theta}G_{11}
+e^{-2i\theta}\overline{G_{11}}\right),
\end{aligned}
\\
&\begin{aligned}
\psi^{b,4}\approx&-
    0.0578\left(i\Psi_1e^{i\theta}-i\Psi_1e^{-i\theta}\right)
- 0.0033\left(e^{2i\theta}G_{11}
+e^{-2i\theta}\overline{G_{11}}\right).
\end{aligned}\\
\end{aligned}
\end{align}
We plot $\psi^{b,j}$ with $j=1,2,3,4$, shown in \autoref{Critical_p11}.

\begin{figure}[h]
  \centering
  \includegraphics[width=.6\textwidth,height=.6\textwidth]{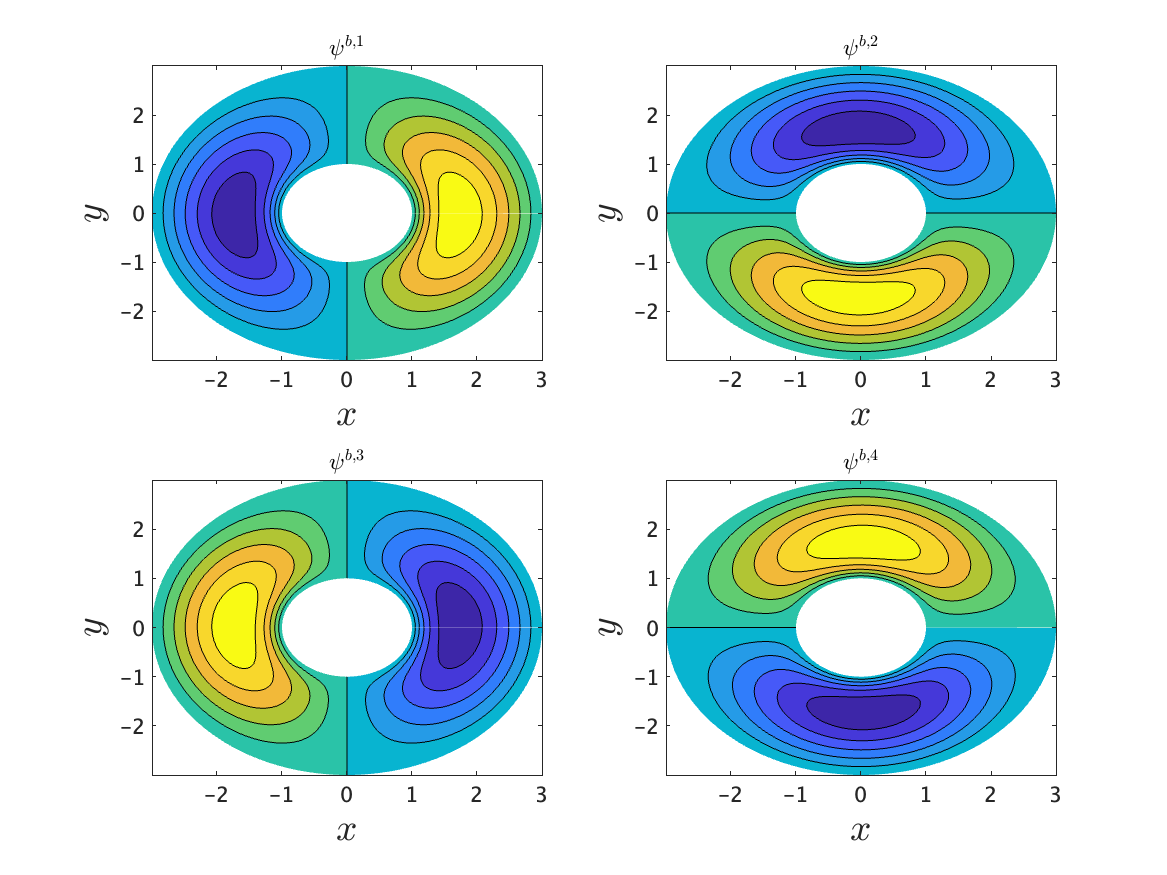}\\
  \caption{Four of bifurcation solutions $\psi_{s}(r,\theta)$
  given by \eqref{ooo}, where
$b=3,\alpha=5$, $\mu_c=1.1924$ and $s=\mathbf{z}_j$ with $j=1,2,3,4$.
  }\label{Critical_p11}
\end{figure}

 Mathematically,$\psi^{b,j}$ with $j=1,2,3,4$  represent four distinct bifurcation solutions. However, based on the analysis presented in \autoref{Critical_p11}, we observe that these solutions share a common flow pattern. Specifically, due to the periodicity of $\theta$, the flow patterns associated with $\psi^{b,j}~(j=1,2,3,4)$ essentially differ only by a translation. In fact, while different points $\mathbf{s}$ on the circle $\abs{\mathbf{s}}=\sqrt{-\lambda_1/l}$, the following expression
 \[
 \psi_{s}(r,\theta)=s\Psi_1(r)e^{i\theta}+\overline{s}\Psi_1(r)e^{-i\theta}
+e^{2i\theta}G_{11} s^2
+e^{-2i\theta}\overline{G_{11}}\overline{s}^2+o(s^2)
\]
 gives a different steady-state solution. However,  the underlying flow patterns of $\psi_{s}(r,\theta)$ for varying $s$ are fundamentally the same, because they only differ by a rotation.

For the Navier-Stokes equations 
\eqref{main=p1}-\eqref{average} or the equivalent problem \eqref{omega-2}, we demonstrate the existence of a viscosity threshold 
$\mu_c$. The exact expression of $\mu_c$ is derived by employing the 
method of separation of variables, as shown in \eqref{uuu}. Specifically, when  $\mu<\mu_c$, the system of equations \eqref{main=p1}-\eqref{average}   bifurcates into an infinite number of nontrivial steady-state solutions. For the majority of parameter values $\alpha$ and $b$, the bifurcation at $\mu=\mu_c$ is supercritical,
all bifurcation solutions consist of a local ring attractor. However, there exists a subcritical bifurcation for certain special parameter regions (green part in \autoref{Critical_p22}) of $(\alpha,b)$ with $a=1$. In that case,
all bifurcation solutions are unstable, which consist of a local ring repeller.

Numerically, we find that 
although these nontrivial steady-state solutions are mathematically different, 
the underlying flow patterns are fundamentally the same, which is essentially due to the periodicity in $\theta$ direction. These results can be used for understanding 
the formation of vortices of fluid
near the viscous boundary described by $\eqref{cond-1}_2$.

 \section{Appendix}
\begin{appendix}

   \begin{lemma}\label{lemma-grad}
   For $\mathbf{u}\in W^{1,p}(\Omega)$ satisfying $\mathbf{u}\cdot \mathbf{n}|_{\partial \Omega}=0$,  there
   exists two constants $ C_a^b$ and $ D_a^b$ such that
             \begin{align}\label{omega}
           \begin{aligned}
 C_a^b\norm{\mathbf{u}}_{L^p}^p\leq \int_{\Omega}\abs{\nabla \mathbf{u}}^p\,dxdy
 +\int_{\partial B(0,b)}\abs{u_{\tau}}^p\,ds,
   \end{aligned}
     \end{align}
     and
                 \begin{align}\label{omega}
           \begin{aligned}
 D_a^b\norm{\mathbf{u}}_{W^{1,p}}^p\leq
 \int_{\Omega}\abs{\nabla \mathbf{u}}^p\,dxdy
 +\int_{\partial \Omega}\abs{u_{\tau}}^p\,ds.
   \end{aligned}
     \end{align}
  \end{lemma}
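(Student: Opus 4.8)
The plan is to establish the first inequality by a compactness--contradiction argument and then to obtain the second one essentially for free by adding back the gradient term. The whole estimate is a Poincar\'e-type inequality, and the role of the boundary constraint $\mathbf{u}\cdot\mathbf{n}|_{\partial\Omega}=0$ is to rule out the only competitors, namely constant vector fields.

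First I would argue by contradiction for the first estimate. Suppose no constant $C_a^b>0$ works; then there is a sequence $\{\mathbf{u}_n\}\subset W^{1,p}(\Omega)$ with $\mathbf{u}_n\cdot\mathbf{n}|_{\partial\Omega}=0$ and $\norm{\mathbf{u}_n}_{L^p}=1$ for which
\[
\int_\Omega\abs{\nabla\mathbf{u}_n}^p\,dx\,dy+\int_{\partial B(0,b)}\abs{u_{n,\tau}}^p\,ds\longrightarrow 0 .
\]
In particular $\norm{\nabla\mathbf{u}_n}_{L^p}\to 0$, so $\{\mathbf{u}_n\}$ is bounded in $W^{1,p}(\Omega)$. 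Since $\Omega$ is a bounded Lipschitz domain, the Rellich--Kondrachov theorem yields a subsequence (not relabeled) with $\mathbf{u}_n\to\mathbf{u}$ strongly in $L^p(\Omega)$; combined with $\nabla\mathbf{u}_n\to 0$ this forces $\nabla\mathbf{u}=0$ and in fact $\mathbf{u}_n\to\mathbf{u}$ strongly in $W^{1,p}(\Omega)$, with $\norm{\mathbf{u}}_{L^p}=1$.

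The key step is then to show the limit must vanish. Because $\Omega$ is connected, $\nabla\mathbf{u}=0$ gives $\mathbf{u}\equiv\mathbf{c}$ for a constant $\mathbf{c}\in\R^2$. By continuity of the trace operator under strong $W^{1,p}$ convergence, $\mathbf{c}\cdot\mathbf{n}=\mathbf{u}\cdot\mathbf{n}=0$ on $\partial\Omega$; on the outer circle $\partial B(0,b)$ the outward normal is $(\cos\theta,\sin\theta)$, so $c_1\cos\theta+c_2\sin\theta\equiv 0$ forces $\mathbf{c}=\mathbf{0}$, contradicting $\norm{\mathbf{u}}_{L^p}=1$. This yields the first inequality (note the tangential term on $\partial B(0,b)$ is not even needed for this argument, since the normal condition alone kills the constant; it only enlarges the right-hand side). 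The second inequality then follows immediately: using the first estimate and $\int_{\partial B(0,b)}\abs{u_\tau}^p\,ds\le\int_{\partial\Omega}\abs{u_\tau}^p\,ds$,
\[
\norm{\mathbf{u}}_{W^{1,p}}^p=\norm{\mathbf{u}}_{L^p}^p+\int_\Omega\abs{\nabla\mathbf{u}}^p\,dx\,dy\le\Bigl(1+\tfrac{1}{C_a^b}\Bigr)\Bigl(\int_\Omega\abs{\nabla\mathbf{u}}^p\,dx\,dy+\int_{\partial\Omega}\abs{u_\tau}^p\,ds\Bigr),
\]
so $D_a^b=(1+1/C_a^b)^{-1}$ works.

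The hard part will be the trace/limit step together with the endpoints $p\in\{1,\infty\}$, where reflexivity or the compact embedding is delicate. For $p=1$ one still obtains strong $W^{1,1}$ convergence (since $\nabla\mathbf{u}_n\to\mathbf{0}$ directly in $L^1$), so the trace argument survives. For $p=\infty$ the compactness fails, and there I would instead run a direct proof by integrating along radial rays in polar coordinates: writing $u_r(r,\theta)=\int_a^r\partial_s u_r(s,\theta)\,ds$ using $u_r|_{r=a}=0$ to control the radial component, and $u_\theta(r,\theta)=u_\theta(b,\theta)-\int_r^b\partial_s u_\theta(s,\theta)\,ds$ to control $u_\theta$ in terms of its boundary datum at $r=b$ and the gradient. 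Since $a\le r\le b$ with $a>0$ all radial weights are comparable to constants, so H\"older's inequality reproduces the estimate with explicit constants and simultaneously covers the full range $1\le p\le\infty$.
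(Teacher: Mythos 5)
Your proof is correct, but it takes a genuinely different route from the paper. The paper proves the inequality constructively: it splits the annulus into four overlapping pieces $\Omega_1,\dots,\Omega_4$ and, for each point, integrates $\partial_x\mathbf{u}$ along a horizontal chord anchored at the outer circle $\partial B(0,b)$ (where $|\mathbf{u}|=|u_\tau|$ because $\mathbf{u}\cdot\mathbf{n}=0$), then applies H\"older to get the bound with an explicit constant $C_a^b$ depending only on $a,b,p$. You instead run a compactness--contradiction argument: normalize $\norm{\mathbf{u}_n}_{L^p}=1$, use Rellich--Kondrachov and $\nabla\mathbf{u}_n\to 0$ to extract a constant limit, and kill the constant with the normal boundary condition on a circle. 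Both arguments are sound (your handling of $p=1$ is fine since $W^{1,1}$ still embeds compactly in $L^1$ and the trace is continuous, and the $p=\infty$ worry is moot since the statement is an integral inequality for finite $p$). What each buys: the paper's chord argument yields an explicit, quantitative constant and stays entirely elementary, at the cost of some case-by-case bookkeeping over the four subdomains; your argument is shorter and cleaner but non-constructive, and it has the side benefit of revealing that the boundary term on $\partial B(0,b)$ is actually redundant for the first inequality --- the condition $\mathbf{u}\cdot\mathbf{n}|_{\partial\Omega}=0$ alone already excludes constants, a point the paper's proof obscures because it uses the outer boundary values as the anchor of the integration. Your deduction of the second inequality from the first is the same trivial step the paper leaves implicit, and it is correct.
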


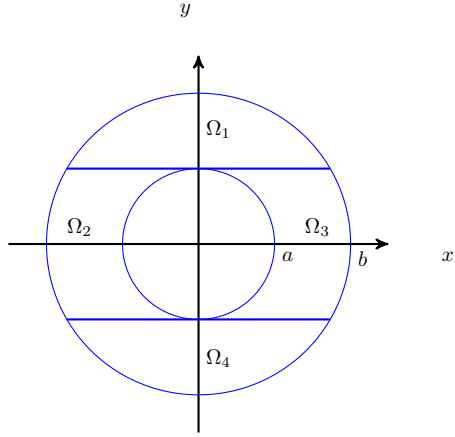
\begin{figure}[tbh]
\centering
        \begin{tikzpicture}[>=stealth',xscale=1,yscale=1,every node/.style={scale=0.8}]
\draw [thick,->] (0,-2.5) -- (0,2.5) ;
\draw [thick,->] (-2.5,0) -- (2.5,0) ;
%\draw [thick,->] (1.4,1.4) -- (0.5,0.5) ;
\node [below right] at (3.1,0) {$x$};
\draw [thick,-,blue] (-1.732,1) -- (1.732,1) ;
\draw [thick,-,blue] (-1.732,-1) -- (1.732,-1) ;
\node [below right] at (1,0) {$a$};
\node [below right] at (2,0) {$b$};
\node [above right] at (0,1.3) {$\Omega_1$};
\node [above left] at (-1.3,0) {$\Omega_2$};
\node [above right] at (1.3,0) {$\Omega_3$};
\node [below right] at (0,-1.3) {$\Omega_4$};
\node [left] at (0,3.1) {$y$};
\draw[domain = -2:360,blue][samples = 200] plot({cos(\x)}, {sin(\x)});
\draw[domain = -2:360,blue][samples = 200] plot({2*cos(\x)}, {2*sin(\x)});
\end{tikzpicture}
\caption{$\Omega_1-\Omega_4$.}
\label{domain14}
\end{figure}

  \begin{proof}
    Let $ \Omega=\cup_{j=1}^4\Omega_j$, shown in \autoref{domain14}.
   For $(x,y)\in \Omega_1\cup  \Omega_4$, we define $l^{j1}_{x,y}$ and $l^{j2}_{x,y}$ to be the two horizontal lines as follows.
   \begin{align}
    \begin{aligned}
   & l^{11}_{x,y}=l^{41}_{x,y}:=\{(s,y)|-\sqrt{b^2-y^2}\leq s\leq x\},\\
   &l^{12}_{x,y}=l^{42}_{x,y}:=\{(s,y)| -\sqrt{b^2-y^2}\leq s\leq \sqrt{b^2-y^2}\},
   \end{aligned}
      \end{align}
      For $(x,y)\in \Omega_2$, we define $l^1_{x,y}$ and $l^2_{x,y}$
    to be the two
   horizontal lines as follows,
   \begin{align}
   \begin{aligned}
   &l^{21}_{x,y}:=\{(s,y)|-\sqrt{b^2-y^2}\leq s\leq x\},\\
    &l^{22}_{x,y}:=\{(s,y)| -\sqrt{b^2-y^2}\leq s\leq -\sqrt{a^2-y^2}\}.
       \end{aligned}
   \end{align}
     For $(x,y)\in \Omega_3$, we define $l^1_{x,y}$ and $l^2_{x,y}$ to be the two horizontal lines as follows,
  \begin{align}
   \begin{aligned}
   &l^{31}_{x,y}:=\{(s,y)|x\leq s\leq \sqrt{b^2-y^2}\},\\
   & l^{32}_{x,y}:=\{(s,y)| \sqrt{a^2-y^2}\leq s\leq \sqrt{b^2-y^2}\}.
      \end{aligned}
   \end{align}
We take $j=1$ and $j=2$ as an example. The fundamental theorem of calculus yields 
   \begin{align}
 \mathbf{u}(x,y)= \mathbf{u}(-\sqrt{b^2-y^2},y)+
 \int_{l^{11}_{x,y}}\partial_x \mathbf{u}\,ds.
   \end{align}
   and 
\begin{align}
 \mathbf{u}(x,y)= \mathbf{u}(-\sqrt{b^2-y^2},y)+
 \int_{l^{21}_{x,y}}\partial_x \mathbf{u}\,ds.
   \end{align}
  which gives
        \begin{align}\label{omega-14}
           \begin{aligned}
&\int_{\Omega_1}\abs{\mathbf{u}(x,y)}^p\,dxdy
=\int_{a}^{b}
\left(\int_{-\sqrt{b^2-y^2}}^{\sqrt{b^2-y^2}}
\abs{\mathbf{u}(x,y)}^p\,dx\right)\,dy
\\&\leq 2^p\sqrt{b^2-a^2}\int_{\partial B(0,b)}\abs{u_{\tau}}^p\,ds
+ \int_{\Omega_1}\left(
 (2\sqrt{b^2-y^2})^{p-1}
 \int_{l^{12}_{x,y}}\abs{\partial_x \mathbf{u}}^p\,ds\right)\,dxdy
 \\&\leq   2^p\sqrt{b^2-a^2}\int_{\partial B(0,b)}\abs{u_{\tau}}^p\,ds+
 \int_{\Omega_1}\left((2\sqrt{b^2-y^2})^{p-1}
 \int_{l^{12}_{x,y}}\abs{\nabla \mathbf{u}}^p\,ds\right)\,dxdy
\\& \leq  2^p\sqrt{b^2-a^2}\int_{\partial B(0,b)}\abs{u_{\tau}}^p\,ds+2b(2\sqrt{b^2-a^2})^{p-1}
 \int_{\Omega_1}\abs{\nabla \mathbf{u}}^p\,dxdy.
   \end{aligned}
     \end{align}
 and 
         \begin{align}\label{omega-23}
           \begin{aligned}
&\int_{\Omega_2}\abs{\mathbf{u}(x,y)}^p\,dxdy
=\int_{-a}^{a}
\left(\int_{-\sqrt{b^2-y^2}}^{-\sqrt{a^2-y^2}}
\abs{\mathbf{u}(x,y)}^p\,dx\right)\,dy
\\&\leq 2^p\sqrt{b^2-a^2}\int_{\partial B(0,b)}\abs{u_{\tau}}^p\,ds
+ \int_{\Omega_2}\left(
 (\sqrt{b^2-a^2})^{p-1}
 \int_{l^{21}_{x,y}}\abs{\partial_x \mathbf{u}}^p\,ds\right)\,dxdy
 \\&\leq   2^p\sqrt{b^2-a^2}\int_{\partial B(0,b)}\abs{u_{\tau}}^p\,ds+
 \int_{\Omega_2}\left((\sqrt{b^2-a^2})^{p-1}
 \int_{l^{21}_{x,y}}\abs{\nabla \mathbf{u}}^p\,ds\right)\,dxdy
\\& \leq  2^p\sqrt{b^2-a^2}\int_{\partial B(0,b)}\abs{u_{\tau}}^p\,ds+2b(\sqrt{b^2-a^2})^{p-1}
 \int_{\Omega_2}\abs{\nabla \mathbf{u}}^p\,dxdy.
   \end{aligned}
     \end{align}

  Hence, in view of \eqref{omega-14} and \eqref{omega-23}, there exists a positive constant $C_a^b$
  such that
           \begin{align}\label{omega}
           \begin{aligned}
\int_{\Omega}\abs{\mathbf{u}(x,y)}^p\,dxdy\leq
C_a^b\left(
\int_{\partial B(0,b)}\abs{u_{\tau}}^p\,ds+
 \int_{\Omega}\abs{\nabla \mathbf{u}}^p\,dxdy
 \right).
   \end{aligned}
     \end{align}
     \end{proof}   

\begin{lemma}\label{lemma-grad-1-1}   
      For $\mathbf{v}, \mathbf{w}\in H^{2}(\Omega)$ satisfying boundary conditions \eqref{cond-1}-\eqref{cond-2} and $\nabla \cdot \mathbf{v}=0$,  we have
       \begin{align}
    \begin{aligned}
 -\int_{\Omega}\Delta \mathbf{v}\cdot\mathbf{w}\,dx\,dy
   =& \int_{\Omega}\nabla\mathbf{v}:\nabla\mathbf{w}\,dx\,dy
   +\int_{\partial B(0,a)}\left(a^{-1}-\frac{\alpha}{\mu}\right)v_{\mathbf{\tau}} w_{\mathbf{\tau}}\,ds\\
     &+\frac{1}{b}\int_{\partial B(0,b)}v_{\mathbf{\tau}} w_{\mathbf{\tau}} \,ds.
   \end{aligned}
  \end{align}
\end{lemma}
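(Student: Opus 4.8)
The plan is to reduce the identity to Green's first identity for the vector Laplacian and then to evaluate the resulting boundary integral using the mixed conditions \eqref{cond-1}--\eqref{cond-2}. First I would integrate by parts componentwise, which for $\mathbf{v}\in H^2(\Omega)$ and $\mathbf{w}\in H^1(\Omega)$ gives
\begin{align*}
-\int_{\Omega}\Delta\mathbf{v}\cdot\mathbf{w}\,dx\,dy
= \int_{\Omega}\nabla\mathbf{v}:\nabla\mathbf{w}\,dx\,dy
-\int_{\partial\Omega}\left[(\mathbf{n}\cdot\nabla)\mathbf{v}\right]\cdot\mathbf{w}\,ds.
\end{align*}
Since $\mathbf{w}$ satisfies $\mathbf{w}\cdot\mathbf{n}=0$ on $\partial\Omega$, I would write $\mathbf{w}=w_{\mathbf{\tau}}\mathbf{\tau}$ on the boundary, so the boundary integral collapses to $\int_{\partial\Omega}\big[(\mathbf{n}\cdot\nabla)\mathbf{v}\cdot\mathbf{\tau}\big]w_{\mathbf{\tau}}\,ds$. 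The entire content of the lemma is then the claim that the scalar $(\mathbf{n}\cdot\nabla)\mathbf{v}\cdot\mathbf{\tau}$ equals $-b^{-1}v_{\mathbf{\tau}}$ on $\partial B(0,b)$ and $-(a^{-1}-\alpha/\mu)v_{\mathbf{\tau}}$ on $\partial B(0,a)$; plugging these two values into the boundary term produces exactly the right-hand side.

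To evaluate the two boundary scalars I would pass to polar coordinates, $\mathbf{v}=v_r\hat r+v_\theta\hat\theta$. Because the frame $\{\hat r,\hat\theta\}$ is independent of $r$, the normal derivative reduces to $(\mathbf{n}\cdot\nabla)\mathbf{v}\cdot\mathbf{\tau}=\pm\partial_r v_\theta$, with the sign fixed by $\mathbf{n}=\hat r$ on $r=b$ and $\mathbf{n}=-\hat r$ on $r=a$. On $\partial B(0,b)$ I would use that $v_r\equiv0$ along the circle, hence $\partial_\theta v_r=0$, so that $\omega=\partial_r v_\theta+v_\theta/r$; the curl-free condition $\nabla\times\mathbf{v}=0$ in \eqref{cond-1} then forces $\partial_r v_\theta|_{r=b}=-v_{\mathbf{\tau}}/b$, the desired outer contribution. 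On $\partial B(0,a)$ I would rewrite the Navier-slip condition \eqref{cond-2} through the rate-of-strain tensor: since $\mathbf{n}\perp\mathbf{\tau}$, the pressure term $-q\mathbf{I}\cdot\mathbf{n}\cdot\mathbf{\tau}$ vanishes, and with $v_r=0$ the tangential stress reduces to $\mu\big(-\partial_r v_\theta+v_\theta/a\big)=\alpha v_{\mathbf{\tau}}$, i.e. $\partial_r v_\theta|_{r=a}=(a^{-1}-\alpha/\mu)v_{\mathbf{\tau}}$. Substituting both identities into the boundary term then yields the stated formula.

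The routine parts are the integration by parts and the polar bookkeeping. The main obstacle is the correct handling of the Navier-slip boundary contribution: one must work with the symmetric gradient $\nabla\mathbf{v}+(\nabla\mathbf{v})^{Tr}$ rather than $\nabla\mathbf{v}$ alone, and keep track of the curvature term $v_\theta/a=v_{\mathbf{\tau}}/a$ arising from the strain-rate component $D_{r\theta}=\tfrac12\big(r^{-1}\partial_\theta v_r+\partial_r v_\theta-v_\theta/r\big)$. The subtle point is that the quantity appearing in Green's identity is the plain normal derivative $\partial_r v_\theta$, whereas \eqref{cond-2} controls the symmetric stress; the two differ precisely by the curvature contribution, and one must verify that this mismatch is exactly absorbed so that no spurious boundary terms remain. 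Once this cancellation is checked on each circle, the claimed identity follows.
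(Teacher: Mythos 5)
Your argument is correct and is essentially the paper's proof: both integrate by parts, use $\mathbf{w}\cdot\mathbf{n}=0$ to reduce the boundary term to $\int_{\partial\Omega}\bigl[(\mathbf{n}\cdot\nabla)\mathbf{v}\cdot\mathbf{\tau}\bigr]w_{\mathbf{\tau}}\,ds$, and then evaluate $\partial_{\mathbf{n}}v_{\mathbf{\tau}}$ on each circle from \eqref{cond-1}--\eqref{cond-2}, with the curvature term $\pm v_{\mathbf{\tau}}/r$ accounting for the mismatch between the plain normal derivative and the symmetric stress. The only difference is bookkeeping: the paper adds and subtracts $(\nabla\mathbf{v})^{Tr}$ in Cartesian index notation and computes $\tau_j n_i\partial_j v_i=\pm v_{\mathbf{\tau}}/r$ geometrically, whereas you carry out the identical cancellation in polar coordinates via $D_{r\theta}$.
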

 \begin{proof}
 Let us denote $(\partial_1,\partial_2):=(\partial_x,\partial_y)$.
 Taking integration by parts and projecting $\mathbf{v}$ onto its
tangential and normal part on the boundary, i.e.,
\begin{align*}
&\mathbf{v}=v_{\mathbf{n}}\mathbf{n}+
v_{\tau}\mathbf{\tau},\quad 
v_{\mathbf{n}}=\mathbf{v}\cdot\mathbf{n},\quad
v_{\tau}=\mathbf{v}\cdot\mathbf{\tau},
\\ &\mathbf{w}=w_{\mathbf{n}}\mathbf{n}+
w_{\tau}\mathbf{\tau},\quad 
w_{\mathbf{n}}=\mathbf{w}\cdot\mathbf{n},\quad
w_{\tau}=\mathbf{w}\cdot\mathbf{\tau},
\end{align*}
we have
 \begin{align}
  \begin{aligned}
  -\int_{\Omega}\Delta \mathbf{v}\cdot\mathbf{w}\,dx\,dy&=
  -\int_{\partial\Omega}w_j n_i\partial_iv_j\,dx\,dy
  +\int_{\Omega}\nabla\mathbf{v}:\nabla\mathbf{w}\,dx\,dy\\
 & =-\int_{\partial\Omega}w_k\tau_k\tau_jn_i\partial_iv_j\,ds-
  \int_{\partial\Omega}w_kn_kn_jn_i\partial_iv_j\,ds
  +\int_{\Omega}\nabla\mathbf{v}:\nabla\mathbf{w}\,dx\,dy\\
   &=-\int_{\partial\Omega}w_k\tau_k\tau_jn_i\left(\partial_iv_j
   +\partial_jv_i\right)\,ds+\int_{\partial\Omega}w_k\tau_k\tau_jn_i\partial_jv_i\,ds\\
  &\quad +\int_{\Omega}\nabla\mathbf{v}:\nabla\mathbf{w}\,dx\,dy\\
  &=\int_{\Omega}\nabla\mathbf{v}:\nabla\mathbf{w}\,dx\,dy
   +\int_{\partial B(0,a)}\left(a^{-1}-\frac{\alpha}{\mu}\right)w_{\mathbf{\tau}}v_{\mathbf{\tau}} \,ds\\
     &\quad+\frac{1}{b}\int_{\partial B(0,b)}w_{\mathbf{\tau}}v_{\mathbf{\tau}} \,ds
 \end{aligned}
  \end{align}
  where we have used $\mathbf{v}\cdot\mathbf{n}|_{\partial\Omega}=0$, $\mu\left(\nabla\mathbf{v}+\left(\nabla\mathbf{v}\right)^{\rm{T_{r}}}\right)\cdot\mathbf{n}\cdot\mathbf{\tau}|_{x^2+y^2=a^2}=\alpha\mathbf{v}\cdot\mathbf{\tau}|_{x^2+y^2=a^2}$, $\nabla \times   \mathbf{v}|_{x^2+y^2=b^2}=0$,
  $\mathbf{w}\cdot\mathbf{n}|_{\partial\Omega}=0$, $\nabla \times   \mathbf{w}|_{x^2+y^2=b^2}=0$
   and
 \begin{align}\label{oooo}
  \begin{aligned}
 & \int_{\partial B(0,a)}w_k\tau_k\tau_jn_i\left(\partial_iv_j
   +\partial_jv_i\right)\,ds   =
  \frac{\alpha}{\mu} \int_{\partial B(0,a)}v_{\mathbf{\tau}}w_{\mathbf{\tau}}  \,ds\\
  &  \int_{\partial B(0,b)}w_k\tau_k\tau_jn_i\left(\partial_iv_j
   +\partial_jv_i\right)\,ds   =-
  \frac{2}{b} \int_{\partial B(0,b)}v_{\mathbf{\tau}}w_{\mathbf{\tau}} \,ds,\\
  &\int_{\partial B(0,a)}w_k\tau_k\tau_jn_i\partial_jv_i\,ds
  =\frac{1}{a}
   \int_{\partial B(0,a)}v_{\mathbf{\tau}}w_{\mathbf{\tau}} \,ds,\\
    &\int_{\partial B(0,b)}w_k\tau_k\tau_jn_i\partial_jv_i\,ds
  =-\frac{1}{b}
   \int_{\partial B(0,b)}v_{\mathbf{\tau}}w_{\mathbf{\tau}} \,ds.
 \end{aligned}
  \end{align}
In the process deriving the \eqref{oooo}, we have utilized
  \[
\tau_jn_i\partial_jv_i|_{\partial B(0,a)}=
 \mathbf{n}\cdot(\mathbf{\tau}\cdot\nabla) (v_{\mathbf{\tau}}\mathbf{\tau})
|_{\partial B(0,a)}
=\left(\mathbf{n}\cdot(\mathbf{\tau}\cdot\nabla) \mathbf{\tau}
|_{\partial B(0,a)}\right)
v_{\mathbf{\tau}}=\frac{1}{a}v_{\mathbf{\tau}},
  \]
  and
      \[
\tau_jn_i\partial_jv_i|_{\partial B(0,b)}=
 \mathbf{n}\cdot(\mathbf{\tau}\cdot\nabla) (v_{\mathbf{\tau}}\mathbf{\tau})
|_{\partial B(0,b)}
=\left(\mathbf{n}\cdot(\mathbf{\tau}\cdot\nabla) \mathbf{\tau}
|_{\partial B(0,b)}\right)
v_{\mathbf{\tau}}=-\frac{1}{b}v_{\mathbf{\tau}}.
  \]
  
 \end{proof}
      \begin{lemma}\label{lemma-grad-1}
   For $\mathbf{v}\in W^{k,p}(\Omega)$ satisfying
   $\mathbf{v} \cdot   \mathbf{n}|_{x^2+y^2=a^2, b^2}=0$
   and $\nabla \cdot \mathbf{v}=0$, we have
            \begin{align}
           \begin{aligned}
\norm{\nabla \mathbf{v}}_{W^{k,p}}\leq C\norm{\omega}_{W^{k,p}}
,\quad 1<p<\infty.
   \end{aligned}
     \end{align}
  \end{lemma}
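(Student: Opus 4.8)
The plan is to pass to a scalar stream function and reduce the bound to $L^p$ elliptic regularity for the Dirichlet Laplacian on $\Omega$. First I would note that, since $\nabla\cdot\mathbf{v}=0$ and $\mathbf{v}\cdot\mathbf{n}=0$ on both circles $\abs{\mathbf{x}}=a,b$, the flux $\int_{\abs{\mathbf{x}}=r}\mathbf{v}\cdot\mathbf{n}\,ds$ is independent of $r$ and equal to the vanishing flux through $\abs{\mathbf{x}}=a$; hence $\mathbf{v}$ admits a single-valued stream function $\psi$ with $\mathbf{v}=\nabla^{\perp}\psi=(-\partial_y\psi,\partial_x\psi)$. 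Then $\omega=\nabla^{\perp}\cdot\mathbf{v}=\Delta\psi$, while $\mathbf{v}\cdot\mathbf{n}=\partial_\tau\psi=0$ on $\partial\Omega$ forces $\psi$ to be a constant $c_a$ on $\abs{\mathbf{x}}=a$ and a constant $c_b$ on $\abs{\mathbf{x}}=b$; after normalizing I may take $c_b=0$ and keep $c_a$ as a single scalar unknown.

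The essential point---and the reason the hypotheses should be read together with the average condition \eqref{average}---is that $c_a$ is tied to the circulation. Splitting $\psi=\psi_0+c_a\Theta$, where $\Delta\psi_0=\omega$ with $\psi_0|_{\partial\Omega}=0$ and $\Theta=\ln(r/b)/\ln(a/b)$ is the radial harmonic measure ($\Delta\Theta=0$, $\Theta|_{r=a}=1$, $\Theta|_{r=b}=0$), one has $\int_{\abs{\mathbf{x}}=a}\mathbf{v}\cdot\tau\,ds=\int_{\abs{\mathbf{x}}=a}\partial_{\mathbf{n}}\psi\,ds$, and since $\int_{\abs{\mathbf{x}}=a}\partial_{\mathbf{n}}\Theta\,ds\neq0$, the vanishing of this circulation (guaranteed by \eqref{average}) determines $c_a$ as a bounded linear functional of $\omega$, giving $\abs{c_a}\le C\norm{\omega}_{L^p}$. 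This step cannot be omitted: the harmonic field $\tfrac1r(-\sin\theta,\cos\theta)$ has $\omega\equiv0$ and $\mathbf{v}\cdot\mathbf{n}=0$ yet $\nabla\mathbf{v}\neq0$, so this circulation mode must be excluded for the asserted inequality to hold.

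With $c_a$ controlled, $\psi$ solves the Dirichlet problem $\Delta\psi=\omega$ on the smooth bounded domain $\Omega$ with boundary data bounded by $\norm{\omega}_{W^{k,p}}$. I would then apply the Calder\'on--Zygmund (Agmon--Douglis--Nirenberg) $L^p$ estimates for the Dirichlet Laplacian to obtain $\norm{\psi}_{W^{k+2,p}}\le C(\norm{\omega}_{W^{k,p}}+\abs{c_a}+\norm{\psi}_{L^p})$. Because $\nabla\mathbf{v}=\nabla\nabla^{\perp}\psi$ is built from the second derivatives of $\psi$, this yields $\norm{\nabla\mathbf{v}}_{W^{k,p}}\le C(\norm{\omega}_{W^{k,p}}+\norm{\psi}_{L^p})$. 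The residual term $\norm{\psi}_{L^p}$ is then absorbed by a standard compactness--contradiction argument: were the final inequality to fail, a normalized sequence $\mathbf{v}_n$ with $\norm{\omega_n}_{W^{k,p}}\to0$ would, via \autoref{lemma-grad} and the Rellich--Kondrachov theorem, converge to a limit that is divergence free, curl free, tangent to $\partial\Omega$ and of zero circulation, hence identically zero, contradicting the normalization.

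The main obstacle is precisely this topological feature of the annulus: the div--curl system with the normal boundary condition carries a one-dimensional kernel of harmonic fields, so the naive estimate cannot hold and the proof must correctly exploit the constraint \eqref{average} both to pin down the boundary constant $c_a$ and to eliminate the lower-order term. An alternative route I would keep in reserve is to treat $(\nabla\cdot\mathbf{v},\nabla^{\perp}\cdot\mathbf{v})=(0,\omega)$ directly as a first-order elliptic system for which $\mathbf{v}\cdot\mathbf{n}|_{\partial\Omega}=0$ is a complementing boundary condition, and to apply the ADN estimates to $\mathbf{v}$ itself; the same kernel issue resurfaces and is resolved in exactly the same way.
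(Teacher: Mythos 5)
Your argument follows the same basic route as the paper's proof: introduce a stream function $\psi$ with $\mathbf{v}=\nabla^{\perp}\psi$, observe that $\Delta\psi=\omega$ with $\psi$ constant on each boundary circle, and invoke $L^p$ elliptic regularity for the Dirichlet problem. The difference is that you treat the circulation mode correctly, whereas the paper does not. The paper normalizes the boundary constants by replacing $\psi$ with $\psi$ plus a harmonic function of the form $A+B\ln r$ with $B$ proportional to $\beta_b-\beta_a$; but $\nabla^{\perp}\ln r=\tfrac{1}{r}(-\sin\theta,\cos\theta)$ is a nontrivial velocity field, so this replacement changes $\mathbf{v}$ itself, and the concluding inequality $\norm{\nabla\mathbf{v}}_{W^{k,p}}\le\norm{\psi}_{W^{k+2,p}}$ is then asserted for a stream function that no longer generates $\mathbf{v}$. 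Your counterexample $\tfrac{1}{r}(-\sin\theta,\cos\theta)$ (divergence free, curl free, tangent to both circles, $\nabla\mathbf{v}\not\equiv0$) shows the estimate is genuinely false under the stated hypotheses alone, so the extra step of pinning down $\beta_b-\beta_a$ via the zero-circulation condition \eqref{average} --- which you carry out by writing $\psi=\psi_0+c_a\Theta$ and solving for $c_a$ as a bounded functional of $\omega$ --- is not optional; it is the missing ingredient. One small remark: your final compactness--contradiction step to absorb $\norm{\psi}_{L^p}$ is harmless but unnecessary, since the homogeneous Dirichlet problem for $\Delta$ on $\Omega$ has trivial kernel and the Agmon--Douglis--Nirenberg estimate already holds without the lower-order term. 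In short, your proposal is correct and, as written, is more complete than the proof in the paper.
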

   \begin{proof}
   Let $v_1=-\partial_y\psi$, $v_2=\partial_x\psi$ and $\omega=( -\partial_y,\partial_x)\cdot\mathbf{v}$, then we have
               \begin{align}
           \begin{cases}
\Delta \psi=\omega, \quad (x,y)\in \Omega,\\
 \psi=\beta_b,\quad \quad x^2+y^2=b^2,\\
  \psi=\beta_a,\quad \quad x^2+y^2=a^2,
   \end{cases}
     \end{align}
     where $\beta_a$ and $\beta_b$ are two constants. Replacing $\psi$ by
     \[
     \psi+\beta_b- \frac{(\beta_b-\beta_a)\ln b}{\ln\left(\frac{b}{a}\right)}
     +
     \frac{\beta_b-\beta_a}{\ln \left(\frac{b}{a}\right)}\ln r ,
     \]
    we get that
               \begin{align}
           \begin{cases}
\Delta \psi=\omega, \quad (x,y)\in \Omega,\\
 \psi=0,\quad \quad x^2+y^2=b^2,\\
  \psi=0,\quad \quad x^2+y^2=a^2.
   \end{cases}
     \end{align}
     By the elliptic regularity theory, we conclude
                 \begin{align}
           \begin{aligned}
\norm{\nabla \mathbf{v}}_{W^{k,p}}\leq \norm{\psi}_{W^{k+2,p}} \leq C\norm{\omega}_{W^{k,p}}.
   \end{aligned}
     \end{align}
   \end{proof}
 \begin{lemma}\label{epsilon-1}
 For any $\epsilon>0$ and $a_1, a_2\in \R$, there exists $C_{\epsilon}$ depending on
 $a_1, a_2, a$ and $b$
 such that
\begin{align}\label{epsilon}
\begin{aligned}
&\abs{a_1\int_{\partial B(0,b)}\left(\tau \cdot \mathbf{v}\right)^2 \,ds
+a_2\int_{\partial B(0,a)}\left(\tau \cdot \mathbf{v}\right)^2 \,ds}
\\&\leq \epsilon \int_{\Omega}\abs{\nabla \mathbf{v}}^2\,dx\,dy
+C_{\epsilon} \int_{\Omega}\abs{\mathbf{v}}^2\,dx\,dy
\end{aligned}
\end{align}
\end{lemma}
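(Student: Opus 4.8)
The plan is to realize the combination of boundary integrals as a single interior integral via the divergence theorem, and then to split off the gradient contribution using Young's inequality. Note first that $\tau\cdot\mathbf{v}=v_\theta$, where $v_\theta:=\mathbf{v}\cdot\tau$ with $\tau=(-\sin\theta,\cos\theta)$; since the origin is excluded from $\Omega$, the field $\tau$ and hence $v_\theta$ are defined throughout $\Omega$, not merely on $\partial\Omega$. I would then fix any $\phi\in C^1([a,b])$ with $\phi(b)=a_1$ and $\phi(a)=-a_2$ (for instance the affine interpolant), and introduce the radial vector field $\mathbf{F}=v_\theta^2\,\phi(r)\,\mathbf{e}_r$, where $\mathbf{e}_r=\mathbf{x}/\abs{\mathbf{x}}$ denotes the unit radial field.

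The orientation bookkeeping is the key observation: on the outer circle $r=b$ the outward normal to $\Omega$ is $\mathbf{n}=\mathbf{e}_r$, while on the inner circle $r=a$ it is $\mathbf{n}=-\mathbf{e}_r$. Hence $\mathbf{F}\cdot\mathbf{n}=\phi(b)v_\theta^2=a_1 v_\theta^2$ on $\partial B(0,b)$ and $\mathbf{F}\cdot\mathbf{n}=-\phi(a)v_\theta^2=a_2 v_\theta^2$ on $\partial B(0,a)$. The divergence theorem therefore gives
\[
a_1\int_{\partial B(0,b)}\left(\tau\cdot\mathbf{v}\right)^2\,ds
+a_2\int_{\partial B(0,a)}\left(\tau\cdot\mathbf{v}\right)^2\,ds
=\int_{\Omega}\nabla\cdot\left(v_\theta^2\,\phi(r)\,\mathbf{e}_r\right)\,dx\,dy .
\]
Expanding the divergence and using $\nabla\cdot(\phi\,\mathbf{e}_r)=\phi'+\phi/r$ and $\mathbf{e}_r\cdot\nabla(v_\theta^2)=2v_\theta\,\partial_r v_\theta$, the right-hand side becomes $\int_{\Omega}v_\theta^2(\phi'+\phi/r)\,dx\,dy+\int_{\Omega}2\phi\,v_\theta\,\partial_r v_\theta\,dx\,dy$.

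For the conclusion I would estimate the two interior integrals separately. The first is bounded by $\norm{\phi'+\phi/r}_{L^\infty}\int_{\Omega}v_\theta^2\,dx\,dy\le C\int_{\Omega}\abs{\mathbf{v}}^2\,dx\,dy$, since $\abs{v_\theta}\le\abs{\mathbf{v}}$; here the constant depends only on $a_1,a_2,a,b$ through $\phi$. For the second, Young's inequality yields $\abs{\int_{\Omega}2\phi\,v_\theta\,\partial_r v_\theta\,dx\,dy}\le\epsilon\int_{\Omega}\abs{\partial_r v_\theta}^2\,dx\,dy+C_\epsilon\int_{\Omega}v_\theta^2\,dx\,dy$. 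The only point requiring a word of justification — and the mildest obstacle in the argument — is the pointwise bound $\abs{\partial_r v_\theta}\le C\abs{\nabla\mathbf{v}}$: writing $v_\theta=-v_1\sin\theta+v_2\cos\theta$ and $\partial_r=\cos\theta\,\partial_x+\sin\theta\,\partial_y$ exhibits $\partial_r v_\theta$ as a linear combination of first-order Cartesian derivatives of $v_1,v_2$ with bounded coefficients, so the claim is immediate. Combining these estimates and relabelling $\epsilon$ gives \eqref{epsilon}, with $C_\epsilon$ depending on $a_1,a_2,a,b$ as required. I would emphasize that this is a purely geometric trace-interpolation inequality: neither the incompressibility of $\mathbf{v}$ nor any boundary condition is used, which is what makes the lemma applicable at every stage of the energy estimates.
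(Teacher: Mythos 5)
Your proof is correct and follows essentially the same route as the paper: the paper interpolates the boundary weights by the affine function $\alpha_1 r+\alpha_2$ with $\alpha_1 b+\alpha_2=a_1$, $\alpha_1 a+\alpha_2=-a_2$, writes the boundary combination as $\int_0^{2\pi}\!\int_a^b \partial_r\bigl((\tau\cdot\mathbf{v})^2(\alpha_1 r+\alpha_2)\bigr)\,dr\,d\theta$ by the fundamental theorem of calculus in $r$, and closes with Young's inequality — which is your divergence-theorem computation in radial coordinates. If anything, your formulation handles the arc-length and area Jacobian factors more transparently than the paper's, but the underlying idea is identical.
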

\begin{proof}
Let us define $\alpha_1$ and $\alpha_2$ as follows
\[
\alpha_1=\frac{a_1+a_2}{b-a},\quad
\alpha_2=\frac{-aa_1-ba_2}{b-a}.
\]
Then, we have
\[
\alpha_1b+\alpha_2=a_1,\quad
\alpha_1a+\alpha_2=-a_2
\]
by which we have
\[
\begin{aligned}
&\abs{a_1\int_{\partial B(0,b)}\left(\tau \cdot \mathbf{v}\right)^2 \,ds
+a_2\int_{\partial B(0,a)}\left(\tau \cdot \mathbf{v}\right)^2 \,ds}
\\&=\abs{\int_0^{2\pi}\left(
\int_a^b \frac{\partial}{\partial r}
\left(
\left(\tau \cdot \mathbf{v}\right)^2\left(\alpha_1r+\alpha_2\right)
\right)\,dr
\right)\,d\theta}
\\&\leq \int_0^{2\pi}
\int_a^b \abs{\frac{\partial}{\partial r}
\left(
\left(\tau \cdot \mathbf{v}\right)^2\left(\alpha_1r+\alpha_2\right)
\right)}\,dr\,d\theta
\\&\leq \epsilon \int_{\Omega}\abs{\nabla \mathbf{v}}^2\,dx\,dy
+C_{\epsilon} \int_{\Omega}\abs{\mathbf{v}}^2\,dx\,dy.
\end{aligned}
\]
where we have used Young's inequalities.
\end{proof}

\begin{lemma}\label{Stokes0725}\textbf{Stokes' estimates}. Consider the equation 
\begin{align}\label{biaozhunde0725}
\begin{aligned}
&\mu \Delta \mathbf{v} - \nabla q = \mathbf{f}, \\
&\nabla \cdot \mathbf{v} = 0,
\end{aligned}
\end{align}
subject to the boundary conditions \eqref{cond-3}, where $\mathbf{f}=\left(f_{1},f_{2}\right)\in \left[L^{p}\left(\Omega\right)\right]^2$. Assume that 
$\left(\mathbf{v},q\right)\in \left[W^{1,p}\left(\Omega\right)\right]^2\times L^{p}\left(\Omega\right)$ is the weak solution to the above equation, then we have the following estimates 
\begin{align}\label{manzu0725}
    \left\|\nabla^2\mathbf{v}\right\|_{L^{p}\left(\Omega\right)}
    +\left\|\nabla q\right\|_{L^{p}\left(\Omega\right)}\leq C\left(\Omega,\alpha,\mu\right)
    \left(\left\|\mathbf{f}\right\|_{L^{p}\left(\Omega\right)}+\left\|\mathbf{v}\right\|_{W^{1,p\left(\Omega\right)}}\right).
    \end{align}
\end{lemma}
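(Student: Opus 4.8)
The plan is to reduce the vector-valued Stokes estimate to a scalar second-order elliptic estimate for the vorticity $\omega=\nabla^{\perp}\cdot\mathbf{v}=(-\partial_y,\partial_x)\cdot\mathbf{v}$, exploiting the two-dimensional structure together with \autoref{lemma-grad-1}, and then to recover the pressure gradient algebraically from the momentum equation. Since $\nabla\cdot\mathbf{v}=0$ and $\mathbf{v}\cdot\mathbf{n}=0$ on both components of $\partial\Omega$, the hypotheses of \autoref{lemma-grad-1} are satisfied, so that $\|\nabla\mathbf{v}\|_{W^{1,p}}\leq C\|\omega\|_{W^{1,p}}$. In particular, it suffices to bound $\|\omega\|_{L^p}+\|\nabla\omega\|_{L^p}$ by the right-hand side of \eqref{manzu0725}, which then controls $\|\nabla^2\mathbf{v}\|_{L^p}$.

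First I would derive the scalar problem solved by $\omega$. Using $\nabla\cdot\mathbf{v}=0$ one checks the identity $\Delta\mathbf{v}=\nabla^{\perp}\omega$, so that the momentum equation \eqref{biaozhunde0725} reads $\mu\nabla^{\perp}\omega-\nabla q=\mathbf{f}$. Applying the scalar curl $\nabla^{\perp}\cdot$ annihilates $\nabla q$ and yields $\mu\Delta\omega=\nabla^{\perp}\cdot\mathbf{f}$. Because $\mathbf{f}\in L^p(\Omega)$, the right-hand side lies in $W^{-1,p}(\Omega)$ with $\|\nabla^{\perp}\cdot\mathbf{f}\|_{W^{-1,p}}\leq\|\mathbf{f}\|_{L^p}$. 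The boundary conditions for $\omega$ are read off from \eqref{cond-3}: on $|\mathbf{x}|=b$ the stress-free condition gives directly $\omega=\nabla\times\mathbf{v}=0$, while on $|\mathbf{x}|=a$ the Navier-slip condition translates, exactly as in the derivation leading to \eqref{three-proof--3}, into the inhomogeneous Dirichlet condition $\omega=-\phi\,v_{\tau}$ with $\phi$ as in \eqref{biaoji0722}, where $v_{\tau}=\mathbf{v}\cdot\tau$ is the boundary trace of $\mathbf{v}$.

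Next I would invoke $L^p$ elliptic regularity for the Dirichlet problem of the Laplacian on the smooth annulus. For data in $W^{-1,p}(\Omega)$ together with boundary values in $W^{1-1/p,\,p}(\partial\Omega)$, the solution lies in $W^{1,p}(\Omega)$ and obeys
\[
\|\omega\|_{W^{1,p}}\leq C\Big(\tfrac{1}{\mu}\|\mathbf{f}\|_{L^p}+\|\phi\,v_{\tau}\|_{W^{1-1/p,\,p}(\partial B(0,a))}+\|\omega\|_{L^p}\Big).
\]
The trace theorem bounds the boundary term by $C\|\mathbf{v}\|_{W^{1,p}}$, and $\|\omega\|_{L^p}\leq\|\nabla\mathbf{v}\|_{L^p}\leq\|\mathbf{v}\|_{W^{1,p}}$, so that $\|\omega\|_{W^{1,p}}\leq C(\|\mathbf{f}\|_{L^p}+\|\mathbf{v}\|_{W^{1,p}})$. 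Feeding this into \autoref{lemma-grad-1} with $k=1$ gives $\|\nabla^2\mathbf{v}\|_{L^p}\leq\|\nabla\mathbf{v}\|_{W^{1,p}}\leq C\|\omega\|_{W^{1,p}}$, while the identity $\nabla q=\mu\nabla^{\perp}\omega-\mathbf{f}$ gives $\|\nabla q\|_{L^p}\leq\mu\|\nabla\omega\|_{L^p}+\|\mathbf{f}\|_{L^p}$. Combining the two bounds yields \eqref{manzu0725}.

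The main obstacle I anticipate is the careful justification of the scalar elliptic estimate at this low regularity: the right-hand side $\nabla^{\perp}\cdot\mathbf{f}$ is only a $W^{-1,p}$ distribution, so the estimate must be formulated in the weak/duality form rather than via classical $W^{2,p}$ theory, and one must verify that the $\omega$ extracted from the given weak solution $\mathbf{v}$ genuinely satisfies both the weak equation and the two boundary conditions in the trace sense. A secondary technical point is the precise derivation of the inner boundary condition $\omega=-\phi v_{\tau}$ from the Navier-slip relation, including the curvature contribution $a^{-1}$, which must be carried out using the orthonormal frame $(\mathbf{n},\tau)$ on $|\mathbf{x}|=a$ exactly as in \autoref{lemma-grad-1-1}. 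An alternative, should the vorticity route prove delicate, is to apply the Agmon--Douglis--Nirenberg theory directly to the Stokes system after checking that the mixed boundary operator in \eqref{cond-3} satisfies the complementing (Lopatinski--Shapiro) condition; the vorticity reduction, however, is better suited to the two-dimensional toolkit already developed in this paper.
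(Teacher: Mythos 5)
Your strategy is in substance the same as the paper's: both proofs reduce the Stokes system to a scalar Dirichlet problem for the vorticity (the paper's auxiliary unknown $G$ in \eqref{fuzhu11020} is exactly your $\omega$, with the inner boundary value $(2/r-\alpha/\mu)v_{\theta}$ extended into $\Omega$ by the cutoff $\frac{b-r}{b-a}$), recover the velocity through a stream function ($\Delta\Phi=G$, $\mathbf{w}=\nabla^{\perp}\Phi$, which is the content of \autoref{lemma-grad-1}), and read off $\nabla q$ from the momentum equation once the curl of the residual vanishes. So the outline, the identity $\Delta\mathbf{v}=\nabla^{\perp}\omega$, the boundary conditions $\omega|_{r=b}=0$ and $\omega|_{r=a}=-\phi v_{\tau}$, and the final bookkeeping are all consistent with the paper.

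The one point where your argument as written would not go through is the step you yourself flag: you impose the Dirichlet data for $\omega$ ``in the trace sense,'' but for the given weak solution one only knows $\omega=\nabla^{\perp}\cdot\mathbf{v}\in L^{p}(\Omega)$, which has no boundary trace; asserting $\omega|_{r=a}=-\phi v_{\tau}$ presupposes exactly the regularity you are trying to prove, so the verification is circular rather than merely technical. The paper avoids this by never attributing boundary values to the vorticity of $\mathbf{v}$: it \emph{constructs} a candidate $(\mathbf{w},\Theta)$ with the desired $W^{2,p}\times W^{1,p}$ regularity and estimate (solving the two Dirichlet problems \eqref{fuzhu11020} and \eqref{fuzhu21020} for data that are genuinely in $W^{1,p}$), and then proves $\mathbf{v}=\mathbf{w}$ a.e.\ by a duality argument: testing the difference against the solution of the adjoint Stokes problem \eqref{renyi1020} with arbitrary right-hand side $\widetilde{\mathbf{F}}\in L^{p'}$. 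To close your proof you would need to replace ``read off the boundary condition for $\omega$'' by either this construct-and-identify scheme or an equivalent very-weak (transposition) formulation of the Dirichlet problem in which the boundary datum is only met in a dual sense; your fallback via the Agmon--Douglis--Nirenberg complementing condition would also work but faces the same issue of identifying the a priori weak solution with the regular one it produces.
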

\begin{proof}
     In what follows, we follow the proof in \cite{Li2025} to obtain the conclusion desired.
We will construct a solution to the following equation 
\begin{align}\label{wufadingyi1020}
\begin{cases}
-\mu\Delta \mathbf{w}+\nabla\Theta=\mathbf{f},
\\
\nabla\cdot \mathbf{w}=0,
\\
\left(\partial_{1}w_{2}-\partial_{2}w_{1}\right)|_{r=b}=\mathbf{w}\cdot\mathbf{n}|_{r=a,b}=0,~
\left(\partial_{1}w_{2}-\partial_{2}w_{1}\right)|_{r=a}
=\left(\frac{2}{r}-\frac{\alpha}{\mu}\right)v_{\theta}|_{r=a},
\\
\left(\partial_{1}w_{2}-\partial_{2}w_{1}\right)|_{r=b}
=\left(\partial_{1}v_{2}-\partial_{2}v_{1}\right)|_{r=b},
\end{cases}
\end{align}
where $\mathbf{w}=\left(w_{1},w_{2}\right)$ and $v_{\theta}=v_{2}\cos{\theta}-v_{1}\sin{\theta}$. 
Obviously, \(\left(\mathbf{v},q\right)\) is a \(p-\)generalized solution of \eqref{wufadingyi1020}. The definition of \(p-\)generalized solution can be found in Definition 2.1 of \cite{Li2025}.  

First, we consider the following auxiliary problem 
\begin{align}\label{fuzhu11020}
\begin{cases}
-\mu\Delta G=\partial_{1}f_{2}-\partial_{2}f_{1},
\\
G|_{\partial\Omega}=
\frac{b-r}{b-a}\left(\frac{2}{r}-\frac{\alpha}{\mu}\right)v_{\theta}
:=U\in W^{1,p}\left(\Omega\right).
\end{cases}
\end{align}
From the elliptic theory, one can conclude that there exists a weak solution 
\(G\) satisfying 
\[
\left\|\nabla G\right\|_{L^{p}\left(\Omega\right)}
\leq C\left(\left\|\mathbf{f}\right\|_{L^{p}\left(\Omega\right)}+\left\|\mathbf{v}\right\|_{W^{1,p}\left(\Omega\right)}\right).
\]
Furthermore, we consider another auxiliary problem 
\begin{align}\label{fuzhu21020}
\begin{cases}
\Delta\Phi=G,
\\
\Phi|_{\partial\Omega}=0.
\end{cases}
\end{align}
Similarly, from the elliptic theory, there exists a \(\Phi\) satisfying the above equation and 
\[
\left\|\nabla^3\Phi\right\|_{L^{p}\left(\Omega\right)}\leq C\left\|\nabla G\right\|_{L^{p}\left(\Omega\right)}
\leq C\left(\left\|\mathbf{f}\right\|_{L^p\left(\Omega\right)}+\left\|\mathbf{v}\right\|_{W^{1,p}\left(\Omega\right)}\right).
\]
Let 
\[
w_{1}=-\partial_{2}\Phi,~w_{2}=\partial_{1}\Phi,
\]
then 
\(\nabla\cdot\mathbf{w}=0\), \(\mathbf{w}\cdot\mathbf{n}|_{r=a,b}=0\) and $\left(\partial_{1}w_{2}-\partial_{2}w_{1}\right)|_{r=b}=0$. Furthermore, 
\(\Delta\Phi=\partial_{1}w_{2}-\partial_{2}w_{1}=G\). Thus, 
\(-\mu\Delta G=-\mu\Delta\left(\partial_{1}w_{2}-\partial_{2}w_{1}\right)=\partial_{1}f_{2}-\partial_{2}f_{1}\). Then, from the field theory(an irrotational field must be  potential), there exists a \(\nabla\Theta\) such that
\[
-\mu\Delta \mathbf{w}+\nabla\Theta=\mathbf{f}.
\]
From the above construction, one can easily verify that \(\left(\mathbf{w},\Theta\right)\) satisfies the equation \eqref{wufadingyi1020}. And 
\begin{align*}
\left\|\nabla^2\mathbf{w}\right\|_{L^{p}\left(\Omega\right)}+\left\|\nabla\Theta\right\|_{L^{p}\left(\Omega\right)}\leq
C\left(\left\|\mathbf{f}\right\|_{L^p\left(\Omega\right)}+\left\|\mathbf{v}\right\|_{W^{1,p}\left(\Omega\right)}\right).
\end{align*}
If we can show that \(\mathbf{v}=\mathbf{w}\) almost everywhere, then the proof is finished. Thus, we will show the uniqueness of solution to the following equation 
\begin{align}\label{weiyixing1020}
\begin{cases}
-\mu\Delta \mathbf{H}+\nabla \Theta_{1}=\mathbf{0},
\\
\nabla\cdot \mathbf{H}=0,
\\
\mathbf{H}\cdot\mathbf{n}|_{r=a,b}=0,~\left(\partial_{1}H_{2}-\partial_{2}H_{1}\right)|_{r=a,b}=0.
\end{cases}
\end{align}
To this end, we consider the following 
auxiliary problem 
\begin{align}\label{renyi1020}
\begin{cases}
-\mu\Delta \mathbf{Q}+\nabla \widetilde{\Theta}=\widetilde{\mathbf{F}},
\\
\nabla\cdot \mathbf{Q}=0,~
\mathbf{Q}\cdot\mathbf{n}|_{r=a,b}=0,~\left(\partial_{1}Q_{2}-\partial_{2}Q_{1}\right)|_{r=a,b}=0,
\end{cases}
\end{align}
where \(\widetilde{\mathbf{F}}\in \left[L^{p'}\left(\Omega\right)\right]^2\) is arbitrary. Following the same steps for the auxiliary problems 
\eqref{fuzhu11020} and \eqref{fuzhu21020} where 
\(U\equiv 0\) and \(\mathbf{f}\) is replaced by \(\widetilde{\mathbf{F}}\), one can obtain the strong solution \(\left(\mathbf{Q},\widetilde{\Theta}\right)\in \left[W^{2,p'}\left(\Omega\right)\right]^2\times \left[L^{p'}\left(\Omega\right)\right]^2\) to the problem \eqref{renyi1020}. Multiplying \(\eqref{weiyixing1020}_{1}\) by \(\mathbf{Q}\) and 
\(\eqref{renyi1020}_{1}\) by \(\mathbf{H}\), then integrating the results over \(\Omega\) by parts, respectively, one can find that 
\[
0=\int_{\Omega}\widetilde{\mathbf{F}}\cdot \mathbf{H}dx,
\]
which implies \(\mathbf{H}=\mathbf{0}\) almost everywhere since 
\(\widetilde{\mathbf{F}}\) is arbitrary. Thus, the uniqueness for the problem \eqref{weiyixing1020} holds. Therefore, we can conclude that \(\mathbf{v}=\mathbf{w}\) almost everywhere. Thus, \(\left(\mathbf{v},q\right)\) satisfies the estimate \eqref{manzu0725}. Finally, from the definition of \(p-\)generalized solution, one can show that \(\left(\mathbf{v},q\right)\) satisfies the boundary condition \eqref{cond-3}.
%     Due to the shape of $\Omega$, it is convenient to consider the equation \eqref{biaozhunde0725} in polar coordinate. Thus, we have 
%     \begin{align*}
%     \begin{cases}
%     \mu\left(\Delta_{r}v_{r}-\frac{v_{r}}{r^2}-\frac{2}{r^2}
%     \frac{\partial v_{\theta}}{\partial \theta}\right)-\frac{\partial q}{\partial r}=f_{1},
%     \\
% \mu\left(\Delta_{r}v_{\theta}-\frac{v_{\theta}}{r^2}+\frac{2}{r^2}
%     \frac{\partial v_{r}}{\partial \theta}\right)-\frac{1}{r}\frac{\partial q}{\partial \theta}=f_{2},
%     \\
% v_{r}|_{r=a,b}=\frac{\partial(rv_{r})}{\partial r}+\frac{\partial v_{\theta}}{\partial\theta}=0,
%     \\
%     \left(\frac{\partial v_{\theta}}{\partial r}-\left(\frac{1}{r}-\frac{\alpha}{\mu}\right)v_{\theta}\right)|_{r=a}=\left(\frac{\partial v_{\theta}}{\partial r}+\frac{v_{\theta}}{r}\right)|_{r=b}=0.
%     \end{cases}
%     \end{align*}
\end{proof}
\end{appendix}

%\bibliography{main-reference}
%\bibliographystyle{unsrt}
 
 \providecommand{\noopsort}[1]{}\providecommand{\singleletter}[1]{#1}%

\end{document}